\newcommand{\h}{\mathfrak{h}}
\newcommand{\Hom}{\operatorname{Hom}}
\newcommand{\Ext}{\operatorname{Ext}}
\newcommand{\B}{\mathcal{B}}
\newcommand{\End}{\operatorname{End}}
\newcommand{\param}{\mathfrak{p}}
\newcommand{\C}{\mathbb{C}}
\newcommand{\Z}{\mathbb{Z}}
\newcommand{\Q}{\mathbb{Q}}
\newcommand{\gr}{\operatorname{gr}}
\newcommand{\HC}{\operatorname{HC}}
\newcommand{\VA}{\operatorname{V}}
\newcommand{\Leaf}{\mathcal{L}}
\newcommand{\A}{\mathcal{A}}
\newcommand{\J}{\mathcal{J}}
\newcommand{\Pro}{\mathcal{P}}
\newcommand{\Coh}{\operatorname{Coh}}
\newcommand{\F}{\mathbb{F}}
\newcommand{\Fr}{\operatorname{Fr}}
\newcommand{\Weyl}{\mathbb{A}}
\newcommand{\R}{\mathbb{R}}
\newcommand{\D}{\mathcal{D}}
\newtheorem{Thm}{Theorem}[section]
\newtheorem{Prop}[Thm]{Proposition}
\newtheorem{Cor}[Thm]{Corollary}
\newtheorem{Lem}[Thm]{Lemma}
\theoremstyle{definition}
\newtheorem{defi}[Thm]{Definition}
\newtheorem{Rem}[Thm]{Remark}
\newtheorem{Conj}[Thm]{Conjecture}
\numberwithin{equation}{section}
\address{Department
of Mathematics, University of Toronto, ON, Canada \&
National Research University Higher School of Economics, Russian Federation}
\email{ivan.loseu@gmail.com}
\thanks{MSC 2010: 16E99, 16G99}
\title{Derived equivalences for Symplectic reflection algebras}
\author{Ivan Losev}
\begin{document}
\begin{abstract}
In this paper we study derived equivalences for symplectic reflection algebras. We establish a version
of  the derived localization theorem between categories of modules over these algebras
and categories of coherent sheaves over quantizations of $\Q$-factorial terminalizations of the
symplectic quotient singularities. To do this we construct a Procesi sheaf on the terminalization
and show that the quantizations of the terminalization are simple sheaves of algebras. We will also sketch
some applications: to the generalized Bernstein inequality and to perversity of wall crossing functors.
\end{abstract}
\maketitle
\section{Introduction}
The goal of this paper is to investigate derived equivalences between categories of modules
over symplectic reflection algebras (these algebras were introduced by Etingof and Ginzburg in
\cite{EG}) and give some  applications.

Let us briefly recall what these algebras are.  Let $V$ be a finite dimensional symplectic vector space over $\C$. Let $\Gamma$ be a finite subgroup of  $\operatorname{Sp}(V)$. Then we can form the smash-product (a.k.a. semi-direct product) algebra $\C[V]\#\Gamma$ that carries a natural grading. A symplectic reflection algebra $H_c$ is a filtered deformation of $\C[V]\#\Gamma$ (a note for experts: in this paper we only consider deformations with $t=1$ that should be thought as ``quantizations'' of $\C[V]\#\Gamma$). Here $c$ denotes a deformation parameter. Namely, consider the
set $S$ of all {\it symplectic reflections} in $\Gamma$, i.e., the  elements $s\in \Gamma$ such that $\operatorname{rk}(s-1)=2$. Let $\param$ denote the space of all conjugation invariant functions $S\rightarrow \C$. We pick $c\in \param$.
We will recall the definition of $H_c$ in Section \ref{SS_SRA}.

Consider the subalgebra $\C\Gamma\subset H_c$ and the averaging idempotent $e\in \C\Gamma$. The {\it spherical subalgebra}
$eH_ce$ is a quantization of $\C[V]^\Gamma$. The variety $V/\Gamma$ is a conical symplectic singularity. Consider
its $\Q$-factorial terminalization $X$ (see Proposition \ref{Prop:Q_fac_terminalization}).
We can talk about filtered quantizations of $\mathcal{O}_X$, see, e.g.,
\cite[Section 3]{BPW}. These are sheaves of filtered algebras on $X$ (in the so called {\it conical topology}).
The filtered quantizations of $X$ are parameterized by $H^2(X^{reg},\C)$, see \cite[Sections 3.1,3.2]{BPW}. For
$\lambda\in H^2(X^{reg},\C)$ we write $\mathcal{D}_\lambda$ for the corresponding quantization
of $\mathcal{O}_X$.
Moreover, in \cite[Section 3.7]{orbit}, we have established an affine isomorphism
$\param\xrightarrow{\sim} H^2(X^{reg},\C), c\leftrightarrow \lambda$, such that
$eH_ce\cong \Gamma(\mathcal{D}_\lambda)$.

Here is the first main result of this paper (that would be standard if $X$ were smooth but, for most $\Gamma$,
the variety $X$ is singular).

\begin{Thm}\label{Thm:simplicity}
The sheaf of algebras $\mathcal{D}_\lambda$ is simple for any $\lambda\in H^2(X^{reg},\C)$.
\end{Thm}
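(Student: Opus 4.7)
The plan is to argue by induction on $|\Gamma|$. The base case $\Gamma=\{1\}$ gives $X=V$, on which $\D_\lambda$ is \'etale-locally a Weyl algebra sheaf and hence simple. For the inductive step, suppose $\J\subsetneq\D_\lambda$ is a proper two-sided ideal; the goal is to show $\J=0$. Taking associated gradeds, $\gr\J\subset\OCat_X$ is a Poisson ideal, so its zero locus $Y\subset X$ is a closed Poisson subvariety, hence a union of symplectic leaf closures. Since $\J$ is proper and $X^{reg}$ is symplectic (and thus contains no proper Poisson subvariety), $Y$ must lie in the singular locus $X\setminus X^{reg}$; because $X$ is a $\Q$-factorial terminalization, its singularities are terminal, forcing $\operatorname{codim}_X Y\ge 4$ and, in particular, every symplectic leaf contained in $Y$ has codimension at least $4$.

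The main step is a slice reduction. Choose a leaf $L\subset Y$ of maximal dimension and a generic point $x\in L$. By the formal slice theorem for conical symplectic singularities and its quantum refinement from earlier work, the formal neighborhood of $x$ in $X$ splits as $\widehat{T_xL}\times\hat S_x$ as formal Poisson schemes, and this lifts to an isomorphism between the completion of $\D_\lambda$ at $x$ and $\widehat{\Weyl}(T_xL)\mathbin{\hat\otimes}\widehat{\A}$, where $\widehat{\A}$ is a filtered quantization of the formal slice $\hat S_x$. Moreover, $\hat S_x$ is canonically the formal $\Q$-factorial terminalization of a smaller symplectic quotient $V'/\Gamma'$, with $\Gamma'\subseteq\Gamma$ the stabilizer in $\Gamma$ of a lift of $\bar x\in V/\Gamma$ and $V'$ a $\Gamma'$-invariant symplectic complement. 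Restricting $\J$ through this factorization yields an ideal $\J_\dagger\subset\widehat{\A}$ that is both nonzero (since $\J\ne 0$) and proper (since $x\in Y$). When $\Gamma'\subsetneq\Gamma$---which is the situation as soon as $\dim L>0$ in the reduced case $V^\Gamma=0$---the inductive hypothesis, combined with a faithfully-flat passage from the algebraic terminalization of $V'/\Gamma'$ to its formal completion at the cone point, makes $\widehat{\A}$ simple, contradicting the existence of $\J_\dagger$.

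The main obstacle is the remaining case in which the stabilizer at a generic point of $L$ equals all of $\Gamma$, i.e.\ when $L$ sits inside the central fiber of $X\to V/\Gamma$. Here the slicing step does not shrink $\Gamma$. I would address this in one of two ways: (i) by a direct argument at the cone point, exploiting the contracting $\C^\times$-action to show that a nonzero Poisson ideal of $\OCat_X$ whose support lies over the unique closed stratum of $V/\Gamma$ must, by repeated Hamiltonian differentiation and positivity of the grading, absorb elements of arbitrarily low filtration degree and therefore equal $\OCat_X$; or (ii) by refining the inductive invariant to a pair such as $(|\Gamma|,d)$, where $d$ is the dimension of the deepest symplectic stratum of $V/\Gamma$ meeting $Y$, so that slicing to a central leaf still strictly reduces $d$. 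A secondary technical point is the faithfully-flat descent converting the algebraic inductive statement into the formal one for $\widehat{\A}$; this should be routine for filtered quantizations but needs to be set up carefully, in particular to ensure that the restriction $\J_\dagger$ is well defined and respects simplicity.
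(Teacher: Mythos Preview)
Your approach is entirely different from the paper's. The paper does not induct on $|\Gamma|$; instead it varies the parameter $\lambda$. First it establishes that $\D_\lambda$ is simple for Weil generic $\lambda$ (Corollary~\ref{Cor:Weil_gen_simpl}), using abelian localization together with the deep external input from \cite{sraco} that $H_c$, hence $eH_ce$, is simple for Weil generic $c$. Then Proposition~\ref{Prop:simplicity} shows that non-simplicity at one $\lambda$ forces non-simplicity at a Weil generic $\lambda'$: one completes a would-be ideal $\J\subset\D_\lambda$ at a point $x$ of an open leaf in the support of $\D_\lambda/\J$, uses the formal triviality of the family over $\h$ (Remark~\ref{Rem:formal_quant}) to spread the completed ideal to $\D^{\wedge_x}_{\h,\hbar}$, and invokes Proposition~\ref{Prop:max_ideal} to produce a $\C^\times$-stable ideal sheaf in $\D_{\h,\hbar}$ that remains proper and nonzero after specialization to generic $\lambda'$.

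Your inductive scheme has genuine gaps. The most serious is the ``main obstacle'' case, which neither proposed fix resolves. Option (i) already fails at the associated-graded level: the pullback $\rho^*(\mathfrak m_0)\cdot\OCat_X$ of the maximal ideal at $0\in V/\Gamma$ is a proper nonzero Poisson ideal sheaf whose zero locus is exactly the central fiber, so no Hamiltonian-differentiation argument can rule out such $\gr\J$. Option (ii) is not made precise enough to help; in particular, if $L$ is a point over $0$ (an isolated singular point of $X$ in the central fiber), slicing returns $X^{\wedge_x}$ itself and no invariant decreases. There is also a gap in the inductive step when $\Gamma'\subsetneq\Gamma$. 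The transverse slice $\hat S_x$ is the formal neighborhood $(X')^{\wedge_{x'}}$ of a \emph{single point} $x'$ in the central fiber of the terminalization $X'\to V'/\Gamma'$, not the whole formal terminalization; these differ whenever that central fiber is positive-dimensional. More importantly, the inductive hypothesis yields simplicity of the \emph{sheaf} $\D'_{\lambda'}$ on $X'$, while you use simplicity of the \emph{completed stalk} $(\D'_{\lambda'})^{\wedge_{x'}}$. ``Faithfully flat passage'' does not convert one into the other: an ideal in a completion need not come from an ideal sheaf, so sheaf-simplicity does not imply that each completion is a simple algebra.
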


Our next result in this paper proves a conjecture from \cite[Section 7.1]{rouq_der}.
We say that parameters $c,c'\in \param$ have {\it integral difference} if their images in $H^2(X^{reg},\C)$
lie in the image of $\operatorname{Pic}(X^{reg})$.

\begin{Thm}\label{Thm:der_equiv1}
Let $c,c'\in \param$ have integral difference. Then there is a derived equivalence
$D^b(H_c\operatorname{-mod})\xrightarrow{\sim} D^b(H_{c'}\operatorname{-mod})$.
\end{Thm}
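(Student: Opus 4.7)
The plan is to factor the desired derived equivalence through the derived category of coherent modules over the quantization $\mathcal{D}_\lambda$ on the terminalization $X$. Concretely, I will establish two types of equivalences and compose them: (i) a \emph{derived localization} equivalence $D^b(\Coh(\mathcal{D}_\lambda)) \xrightarrow{\sim} D^b(H_c\operatorname{-mod})$ built from the Procesi sheaf on $X$ (the construction promised in the abstract), and (ii) a \emph{twisting} equivalence $D^b(\Coh(\mathcal{D}_{\lambda'})) \xrightarrow{\sim} D^b(\Coh(\mathcal{D}_\lambda))$ afforded by the integrality of $\lambda - \lambda' \in \operatorname{Pic}(X^{reg})$. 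Composing (i) for $c$, (ii), and the inverse of (i) for $c'$ then yields the theorem.

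For (i), I would construct a quantized Procesi sheaf $\mathcal{P}_c$: a $\mathcal{D}_\lambda$-coherent, locally projective right module on $X$ whose classical limit is the classical Procesi bundle and which satisfies $H_c^{opp} \cong \Gamma(\End_{\mathcal{D}_\lambda}(\mathcal{P}_c))$. The functor $R\Gamma \circ R\mathcal{H}om_{\mathcal{D}_\lambda}(\mathcal{P}_c, -)$ then takes values in $D^b(H_c\operatorname{-mod})$, and the desired equivalence follows from the standard localization argument once one knows $\mathcal{P}_c$ is locally projective and uses Theorem \ref{Thm:simplicity}. For (ii), choose a line bundle $L$ on $X^{reg}$ with $c_1(L) = \lambda - \lambda'$. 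Quantize $L$ to a $(\mathcal{D}_\lambda, \mathcal{D}_{\lambda'})$-bimodule $\mathcal{D}_{\lambda,\lambda'}$ on $X^{reg}$ by the standard construction, then extend uniquely to a coherent bimodule on all of $X$ by pushforward---this uses that $X$ is $\Q$-factorial terminal, so $X \setminus X^{reg}$ has codimension $\geq 2$. The candidate equivalence is then $\mathcal{D}_{\lambda,\lambda'} \otimes^L_{\mathcal{D}_{\lambda'}} -$, with quasi-inverse built analogously from $L^{-1}$.

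The main obstacle is to verify that the bimodule-type functors in (i) and (ii) really are derived equivalences. The key is to show that the composition of each bimodule with its quasi-inverse recovers the appropriate structure sheaf, i.e. $\mathcal{D}_{\lambda,\lambda'} \otimes^L_{\mathcal{D}_\lambda} \mathcal{D}_{\lambda',\lambda} \cong \mathcal{D}_{\lambda'}$ and the analogous Morita statement for $\mathcal{P}_c$. On $X^{reg}$ these isomorphisms are classical; extending them across the singular locus amounts to showing that the natural maps become isomorphisms of sheaves of algebras, which reduces to ruling out kernels and cokernels. Both are obstructed by Theorem \ref{Thm:simplicity} together with the codimension-$2$ condition: a nontrivial kernel or cokernel would manifest as a proper two-sided ideal sheaf in $\mathcal{D}_\lambda$ or $\mathcal{D}_{\lambda'}$, contradicting simplicity. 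With these two equivalences in hand, the theorem follows by composition.
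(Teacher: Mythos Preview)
Your overall architecture matches the paper's: factor through $D^b(\Coh(\mathcal{D}_\lambda))$, use a quantized Procesi sheaf for the link to $H_c$, and use quantized line bundles on $X^{reg}$ pushed forward to $X$ for the twisting, with simplicity of $\mathcal{D}_\lambda$ killing whatever is supported on $X^{sing}$. Your step (ii) is essentially the paper's argument; the only correction is that the kernel and cokernel of the multiplication maps are not themselves two-sided ideal sheaves but coherent $\mathcal{D}_\lambda$- (or $\mathcal{D}_{\lambda'}$-) modules supported on $X^{sing}$, and one invokes the consequence of simplicity (Corollary~\ref{Cor:Conj_equiv_stat}) that no nonzero coherent $\mathcal{D}_\lambda$-module can have support contained in $X^{sing}$.

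The genuine gap is in step (i). You assert that $\mathcal{P}_c$ is locally projective and that the equivalence then follows from a ``standard localization argument.'' But $X$ is singular (this is the whole point of the paper), and the Procesi sheaf is \emph{not} locally free there: it is only maximal Cohen--Macaulay, locally free on $X^{reg}$. So tilting-type arguments do not apply directly, and Theorem~\ref{Thm:simplicity} alone does not give you the equivalence. The paper instead proceeds in two nontrivial stages. First it proves a classical McKay equivalence $R\Gamma: D^b(\Coh(\mathcal{H}^0))\xrightarrow{\sim} D^b(\C[V]\#\Gamma\operatorname{-mod})$ for $\mathcal{H}^0=\mathcal{E}nd(\Pro)$ by showing that $D^b(\Coh(\mathcal{H}^0))$ is indecomposable and that $[\dim X]$ is a Serre functor (both using that $\mathcal{H}^0$ is maximal Cohen--Macaulay and $K_X$ is trivial), then applying a Bridgeland--King--Reid style criterion. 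This lifts to the filtered situation to give $D^b(\Coh(\mathcal{H}_c))\xrightarrow{\sim} D^b(H_{w(c)}\operatorname{-mod})$. Second, it proves that $M\mapsto eM$ is an \emph{abelian} equivalence $\Coh(\mathcal{H}_c)\xrightarrow{\sim}\Coh(\mathcal{D}_c)$; this is done by first treating the case where abelian localization holds for $(X,c)$ and $w(c)$ is spherical, and then transporting to arbitrary $c$ via the $\operatorname{Pic}(X)$-twisting bimodules $\mathcal{E}_{c,\chi}$. None of this is the ``standard'' argument you invoke, and it is where most of the work in the paper lies.
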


This theorem is proved using a strategy from \cite[Section 5]{GL}, where the special case of the theorem in the case of wreath-product groups $\Gamma$ was proved.
Namely, we construct a certain sheaf $\mathcal{P}$ on $X$ that we call a {\it Procesi sheaf} that generalizes the notion of a Procesi bundle in the case when $X$ is smooth, see \cite{BK, Procesi}.
Then we quantize $\mathcal{P}$ to a right $\mathcal{D}_c$-module
denoted by $\mathcal{P}_c$  (we write $\mathcal{D}_c$ for $\mathcal{D}_\lambda$, where $\lambda\in H^2(X^{reg},\C)$ corresponds to $c\in \param$). One can show that for  a suitable choice of $\mathcal{P}$, we have $\operatorname{End}_{\mathcal{D}_c^{opp}}(\mathcal{P}_c)\xrightarrow{\sim} H_c$, see
Proposition \ref{Prop:quant_Procesi_endom} below.

The following result implies Theorem \ref{Thm:der_equiv1}.

\begin{Thm}\label{Thm:der_equiv2}
The following is true:
\begin{enumerate}
\item When $c,c'$ have integral difference, the categories $\operatorname{Coh}(\mathcal{D}_c),
\operatorname{Coh}(\mathcal{D}_{c'})$ of coherent $\mathcal{D}_c$- and $\mathcal{D}_{c'}$-modules are
equivalent.
\item The functor $R\Gamma(\mathcal{P}_c\otimes_{\mathcal{D}_c}\bullet):D^b(\operatorname{Coh}(\mathcal{D}_c))
\xrightarrow{\sim} D^b(H_c\operatorname{-mod})$ is a derived equivalence.
\end{enumerate}
\end{Thm}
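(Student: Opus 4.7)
The plan is to treat part (2) as the main technical input, establishing $\Pro_c$ as a tilting-type generator on the $\D_c$-side, and to handle part (1) separately via quantized line-bundle twists. Throughout, the simplicity of $\D_c$ (Theorem~\ref{Thm:simplicity}) and good structural properties inherited from the classical Procesi sheaf $\Pro := \gr \Pro_c$ supply the vanishings needed.

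For part (2), I would verify three properties of $\Pro_c$: (a) exactness of $\Pro_c \otimes_{\D_c} \bullet$ on $\Coh(\D_c)$, so that the derived tensor agrees with the underived one; (b) the cohomology vanishing $R^{>0}\Gamma(\Pro_c \otimes_{\D_c} M) = 0$ for every $M \in \Coh(\D_c)$; and (c) that $\Pro_c$ generates the triangulated category $D^b(\Coh(\D_c))$. Property (a) should follow from the construction of $\Pro_c$ as a quantization of $\Pro$, whose Morita/progenerator behavior over $\C[V]\#\Gamma$ provides local projectivity of $\Pro_c$ over $\D_c^{\mathrm{opp}}$. For (b), I would take a good filtration on $M$ and reduce to the classical vanishing $R^{>0}\Gamma(\Pro \otimes_{\mathcal{O}_X} \gr M) = 0$, which follows from the fact that $\pi \colon X \to V/\Gamma$ is a crepant partial resolution of an affine variety and from the defining property of $\Pro$ (its pushforward to $V/\Gamma$ recovers the SRA-side of the Morita picture). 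For (c), I would combine Theorem~\ref{Thm:simplicity} with the local progenerator property of $\Pro$ to conclude that $\Hom_{\D_c^{\mathrm{opp}}}(\Pro_c, M) \neq 0$ for every nonzero $M \in \Coh(\D_c)$. Given (a)--(c) together with $\End_{\D_c^{\mathrm{opp}}}(\Pro_c) \cong H_c$ from Proposition~\ref{Prop:quant_Procesi_endom}, a standard Bondal--Van den Bergh-type tilting argument then yields that $R\Gamma(\Pro_c \otimes_{\D_c} \bullet)$ is the desired derived equivalence.

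For part (1), given a line bundle $L$ on $X^{reg}$ whose class in $H^2(X^{reg}, \C)$ realizes $c - c'$, I would construct a $(\D_{c'}, \D_c)$-bimodule $B_L$ by quantizing the rank-one reflexive extension $\tilde{L}$ of $L$ to the $\Q$-factorial terminalization $X$. The desired equivalence $\Coh(\D_c) \to \Coh(\D_{c'})$ is then $B_L \otimes_{\D_c} \bullet$, with inverse constructed analogously from $L^{-1}$. The essential check is the isomorphism $B_L \otimes_{\D_c} B_{L^{-1}} \cong \D_{c'}$: the natural multiplication map is an isomorphism over $X^{reg}$, its associated graded is an isomorphism of rank-one reflexive sheaves on $X$, and Theorem~\ref{Thm:simplicity} then upgrades this to a global isomorphism of sheaves of algebras (any nontrivial kernel or cokernel would generate a proper two-sided ideal of $\D_{c'}$). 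The main obstacle I anticipate is item (b) of part (2): cohomology vanishing on the singular variety $X$ is delicate, and the reduction from $\D_c$-modules to the classical picture has to be executed with care, very likely using a Rees-style deformation of $\D_c$ to $\gr \D_c$ together with a Grauert--Riemenschneider-type vanishing for $\pi$.
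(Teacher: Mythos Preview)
Your treatment of part~(1) matches the paper: quantize a line bundle on $X^{reg}$, push forward, and use simplicity of $\D_c$ (via Corollary~\ref{Cor:Conj_equiv_stat}) to show the multiplication maps $\D_{c,\chi}\otimes\D_{c+\chi,-\chi}\to\D_{c+\chi}$ are isomorphisms. This is exactly the argument appearing in the proof of Theorem~\ref{Thm:der_equiv1}.

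For part~(2), however, there is a genuine gap: your claim~(b) is false. Already in the smooth case $\Pro$ is a vector bundle containing $\mathcal{O}_X$ as a direct summand (axiom~(iii)), so your proposed classical vanishing $R^{>0}\Gamma(\Pro\otimes_{\mathcal{O}_X}N)=0$ for all coherent $N$ would force $H^{>0}(X,N)=0$ for every $N$, which fails e.g.\ for anti-ample line bundles. The composite $R\Gamma\circ(\Pro_c\otimes_{\D_c}\bullet)$ is a derived equivalence, not a $t$-exact one; no Grauert--Riemenschneider-type argument will rescue~(b). A tilting argument only needs the self-Ext vanishing $\Ext^{>0}(\Pro_c,\Pro_c)=0$ (which does hold, from axiom~(ii)) together with generation, not the much stronger claim you wrote. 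Your justification for~(a) is also inadequate: the global identity $\End(\Pro)\cong\C[V]\#\Gamma$ says nothing about local projectivity of $\Pro$ over $\mathcal{O}_X$ on the singular locus, and the equivalent assertion $\mathcal{H}_c e\mathcal{H}_c=\mathcal{H}_c$ is a nontrivial fact.

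The paper takes a different route. It first proves a \emph{classical} McKay equivalence $R\Gamma:D^b(\Coh(\mathcal{H}^0))\xrightarrow{\sim}D^b(\C[V]\#\Gamma\operatorname{-mod})$ via a Serre-functor/indecomposability argument (Lemma~\ref{Lem:indec_Serre}, following \cite{BK}), then lifts it by filtration to the quantized version $D^b(\Coh(\mathcal{H}_c))\xrightarrow{\sim}D^b(H_c\operatorname{-mod})$ (Corollary~\ref{Cor:quant_McKay}). Separately, it proves the \emph{abelian} equivalence $M\mapsto eM:\Coh(\mathcal{H}_c)\xrightarrow{\sim}\Coh(\D_c)$ (Proposition~\ref{Prop:spher_equiv_local}), not directly but by first establishing it at a shifted parameter $c+\chi$ where abelian localization and sphericality hold, then transporting back via quantized translation bimodules $\mathcal{E}_{c,\chi}$. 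This abelian equivalence is precisely your~(a), proved indirectly; composing with the derived McKay equivalence gives~(2). The Serre-functor trick is what substitutes for your failed~(b).
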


Let us describe some applications and consequences of our constructions.

Every pair $(X,\Pro)$ of a $\Q$-factorial terminalization $X$ of $V/\Gamma$ and a Procesi sheaf $\Pro$
on $X$ gives rise to a $t$-structure on $D^b(H_c\operatorname{-mod})$. In Section \ref{SS_Perv}, we will explain that
\cite[Theorem 3.1]{Perv} generalizes to our situation (and to a more general one):
some of the t-structures we consider are perverse to each other.

We will also establish the following result ((2) was conjectured by Etingof and Ginzburg in
\cite{EG2}):

\begin{Thm}\label{Thm:B_ineq}
The following is true for all $c\in \param$.
\begin{enumerate}
\item The regular $H_c$-bimodule $H_c$ has finite length.
\item Generalized Bernstein inequality holds for $H_c$, i.e., $$\operatorname{GK-}\dim(M)\geqslant
\frac{1}{2}\operatorname{GK-}\dim \left(H_c/\operatorname{Ann}(M)\right)$$
for any finitely generated $H_c$-module $M$.
\end{enumerate}
\end{Thm}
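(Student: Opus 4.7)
The plan is to deduce both parts from a theory of Harish-Chandra bimodules for $H_c$ equipped with restriction functors to slice symplectic reflection algebras, built from the Procesi sheaf of Theorem \ref{Thm:der_equiv2} together with the simplicity of $\mathcal{D}_c$ from Theorem \ref{Thm:simplicity}.

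\textbf{Part (1).} Call an $H_c$-bimodule $B$ \emph{Harish-Chandra} (HC) if it admits a good bimodule filtration whose associated graded is finitely generated over $\C[V]\#\Gamma$ and set-theoretically supported on the diagonal of $V\times V$; the regular bimodule is HC. I would construct restriction functors $\bullet_{\dagger,\underline\Gamma}$ from HC $H_c$-bimodules to HC $\underline{H}_{\underline{c}}$-bimodules, indexed by conjugacy classes of stabilizers $\underline\Gamma=\operatorname{Stab}_\Gamma(v)$, by transporting through the Procesi sheaf to $\mathcal{D}_c$-bimodules on $X$ and restricting to a formal neighbourhood of the symplectic leaf of $X$ above $\Gamma v$. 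These functors are exact, and $B_{\dagger,\underline\Gamma}$ measures the multiplicity of $\VA(B)$ along the leaf; in particular, vanishing of $B_{\dagger,\underline\Gamma}$ for every proper $\underline\Gamma\subsetneq\Gamma$ forces $\VA(B)=\{0\}$. Finite length of $H_c$ as a bimodule then follows by induction on $\dim\VA(B)$: the base case (finite-dimensional $B$) reduces via the idempotent $e$ to finite-dimensional $\mathcal{D}_c$-bimodules, whose length is bounded by the simplicity of $\mathcal{D}_c$; the inductive step bounds the number of subquotients of $B$ with constant open leaf in their associated variety by the length of the restriction to that leaf (finite-dimensional, hence of finite length), while subquotients with smaller $\VA$ are handled by induction.

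\textbf{Part (2).} From (1), $H_c$ has finitely many two-sided ideals, and by standard reductions we may assume $M$ is simple with primitive annihilator $J=\operatorname{Ann}(M)$. Gabber's coisotropy theorem for $\C[V]\#\Gamma$, combined with the finiteness of symplectic leaves in $V/\Gamma$, shows that $\VA(J)$ is the closure of the $\Gamma$-saturation of a single symplectic leaf $\mathcal{L}$ of dimension $2d=\operatorname{GK-}\dim(H_c/J)$. Restricting $M$ via $\bullet_{\dagger,\underline\Gamma}$ for $\underline\Gamma=\operatorname{Stab}_\Gamma(v)$, $v\in\mathcal L$, yields a finite-dimensional $\underline{H}_{\underline{c}}$-module $\underline M$. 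Faithfulness of $M$ over $H_c/J$, together with nonvanishing of $(H_c/J)_{\dagger,\underline\Gamma}$ (since $\mathcal L$ is open in $\VA(J)$), forces $\underline M\neq 0$; this in turn implies $\VA(M)\supseteq\overline{\mathcal L}$, whence $\operatorname{GK-}\dim(M)\geqslant 2d$.

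\textbf{Main obstacle.} The crucial technical step is the construction and exactness of the restriction functors for HC bimodules and modules over $H_c$, together with the nonvanishing property used in (2). My plan is to perform everything on the sheaf side: transport via the Procesi sheaf (Theorem \ref{Thm:der_equiv2}) to $\mathcal{D}_c$-modules on $X$, where restriction to a formal slice of a symplectic leaf in $X$ is geometrically transparent and exactness and nonvanishing are standard, then transfer the results back by taking global sections.
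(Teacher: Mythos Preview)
Your outline for (2) is essentially \cite[Theorem 1.1]{B_ineq}, which the paper simply cites to reduce (2) to (1); the restriction functors it needs already exist directly from \cite{sraco}, so routing them through the Procesi sheaf is an unnecessary detour. The issue is in (1): your induction on $\dim\VA(B)$ has a real gap. Exactness of restriction to an open leaf $\mathcal L\subset\VA(B)$ does bound, in any chain of sub-bimodules, the number of subquotients whose support meets $\mathcal L$. But ``subquotients with smaller $\VA$ are handled by induction'' conflates two different claims: the inductive hypothesis says that \emph{each} such subquotient has finite length, not that there are finitely \emph{many} of them in the chain. Even after quotienting by the (Noetherianly existing) maximal sub-bimodule with strictly smaller support, subquotients of the result may still have smaller $\VA$, and nothing you wrote bounds their number---in particular the number of finite-dimensional subquotients is uncontrolled, and your base-case appeal to the idempotent $e$ and simplicity of $\mathcal D_c$ does not touch this (for aspherical $c$ the functor $M\mapsto eM$ is not even faithful).

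The paper's route for (1) is different and sidesteps this problem. It first treats the spherical algebra $\A_\lambda=eH_ce$: since $(V\oplus V)/(\Gamma\times\Gamma)$ is again a symplectic quotient singularity, Theorem~\ref{Thm:simplicity} gives simplicity of $\mathcal D_\lambda\widehat\otimes\mathcal D_{-\lambda}$, so by Corollary~\ref{Cor:Conj_equiv_stat} every coherent module over it meets the regular locus of $X\times X^-$. Localized simple HC bimodules therefore have Lagrangian support in $X\times_Y X^-$, and the characteristic cycle furnishes an additive invariant bounding the length of any HC $\A_\lambda$-bimodule (Proposition~\ref{Prop:HC_fin}). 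The passage to $H_c$ (Corollary~\ref{Cor:HC_fin}) is made by producing a minimal two-sided ideal of finite codimension---immediate from the $\A_\lambda$ case when $c$ is spherical, and obtained by translation via $\mathcal E_{c,\chi}$ to a spherical parameter with abelian localization otherwise---after which \cite[Remark~4.5]{B_ineq} applies.
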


Reduction of (2) to (1) was done in \cite[Theorem 1.1]{B_ineq}. We will explain necessary modifications of arguments
from \cite{B_ineq} that prove (1).

{\bf Acknowledgements}. I would like to thank Roman Bezrukavnikov, Dmitry Kaledin, Emanuele Macri,
Yoshinori Namikawa and Ben Webster for stimulating discussions. I also would like to
thank the referees for many useful comments that allowed me to improve the exposition.
This work has been funded by HSE University Basic Research Program
by the Russian Academic Excellence Project '5-100'.
This work was also partially supported by the NSF under grant DMS-1501558.

\section{Preliminaries}
\subsection{Symplectic reflection algebras}\label{SS_SRA}
Let us start by recalling the definition of a symplectic reflection algebra $H_c$ and
its spherical subalgebra, due to Etingof and Ginzburg, \cite{EG}.

Let, as before, $V$ be a finite dimensional
vector space with symplectic form $\omega$ and $\Gamma\subset \operatorname{Sp}(V)$
be a finite subgroup. Let $S\subset \Gamma$ denote the set of symplectic reflections in
$\Gamma$, it is the union of $\Gamma$-conjugacy classes. Let $\param$ denote the space
of $\Gamma$-invariant maps $S\rightarrow \C$. For $s\in S$, let $\omega_s$ denote the skew-symmetric
form whose kernel is $\operatorname{ker}(s-1)$ and whose restriction to $\operatorname{im}(s-1)$
coincides with that of $\omega$. For $c\in S$ we set
$$H_c=T(V)\#\Gamma/(u\otimes v-v\otimes u-\omega(u,v)-\sum_{s\in S}c_s\omega_s(u,v)s).$$
This algebra comes with a filtration inherited from $T(V)\#\Gamma$.
The PBW property established in \cite[Theorem 1.3]{EG} says that $\gr H_c=\C[V]\#\Gamma$.

Also we can consider the universal version $H_{\param}$, a $\C[\param]$-algebra
that specializes to $H_c$ for $c\in \param$. Its associated graded algebra
$\gr H_{\param}$ is a flat graded deformation of $\C[V]\#\Gamma$ over $\C[\param]$.

Now take the averaging idempotent $e\in \C\Gamma, e:=\frac{1}{|\Gamma|}\sum_{\gamma\in \Gamma}\gamma$.
We can consider the subalgebra
$eH_ce\subset H_c$ with unit $e$. This is a filtered algebra with $\gr eH_ce=\C[V]^\Gamma$.

\subsection{$\Q$-factorial terminalizations}\label{SS_Q_fac_term}
The variety $Y:=V/\Gamma$ is a {\it conical symplectic singularity} meaning that $Y$
\begin{itemize}
\item
has symplectic singularities in the sense of Beauville, \cite[Definition 1.1]{Beauville},
\item and comes equipped with an action of a torus $\C^\times$ contracting $Y$ to  a single point and rescaling the Poisson structure on $Y$ via the
character $t\mapsto t^{-d}$ for a positive integer $d$. \end{itemize}
Indeed, the dilation action
of $\C^\times$ on $V$ descends to $V/\Gamma$ and $d=2$.

Now let $Y$ be an arbitrary conical symplectic singularity. The following result is well-known
but we were unable to find the proof in the literature, so we prove it.

\begin{Prop}\label{Prop:Q_fac_terminalization}
There is a variety $X$ and a projective birational morphism $\rho:X\rightarrow Y$
with the following properties:
\begin{enumerate}
\item $X$ is normal and $\Q$-factorial,
\item $X$ is Poisson and has symplectic singularities,
\item $\operatorname{codim}_{X}X^{sing}\geqslant 4$,
\item $\rho$ is a Poisson (in particular, crepant) morphism,
\item There is a $\C^\times$-action on
$X$ making $\rho$ equivariant.
\end{enumerate}
\end{Prop}

Note that by \cite{Namikawa_symplectic}, modulo (2), condition (3) is equivalent to $X$ being terminal.
So $X$ is called a $\Q$-factorial terminalization of $Y$. In general, it is not unique,
we will elaborate on that below.

\begin{proof}
The variety $Y$ has symplectic and hence canonical singularities. So $Y$ with the zero divisor
satisfies the conditions of \cite[Corollary 1.4.3]{BCHM} (note that our $Y$ is $X$ in
that corollary and our $X$ is their $Y$). Take a resolution of singularities $f: W\rightarrow Y$.
Let $\mathfrak{C}$ denote the set of all components of the exceptional divisor of $f$
with  discrepancy zero. By \cite[Corollary 1.4.3]{BCHM}, there is a normal $\Q$-factorial
variety $X$ with a projective morphism $\rho:X\rightarrow Y$ so that $\mathfrak{C}$ is precisely
the set of components of the exceptional divisor of $\rho$. In particular, $\rho$ is crepant.
Since $Y$ has symplectic singularities, the symplectic form on $Y^{reg}$ lifts to
$X^{reg}$. Since $\rho$ is crepant, that extension is symplectic. We conclude that
$X$ has symplectic singularities.

Now we prove (3). Consider the rational
map $W\dashrightarrow Y$ that intertwines $f$ and $\rho$. We can find a smooth
$\tilde{W}$ with projective birational morphisms  $f':\tilde{W}\rightarrow W$
and $\rho': \tilde{W}\rightarrow Y$ intertwined by $W\dashrightarrow Y$.
In particular, $f\circ f'=\rho\circ \rho'$
Every component of the exceptional divisor of $f'$ has positive  discrepancy
because $W$ is smooth. So $\mathfrak{C}$ is  the set of all components of
the exceptional divisor of $\rho\circ\rho'$ that have discrepancy $0$.
So every component of $\rho': \tilde{W}\rightarrow X$ has positive  discrepancy.

Now suppose that $\operatorname{codim}_X X^{sing}< 4$. Since $X$ has symplectic
singularities, it has finitely many symplectic leaves, \cite{Kaledin}.
So $X$ has a codimension $2$ symplectic leaf. Over a formal slice to a point in
this leaf, $\rho'$ factors through a minimal resolution. So $\rho'$ must have
a component of discrepancy $0$ in its exceptional divisor, a contradiction.
The proof of (1)-(4) is now complete.

(5) follows from \cite[Proposition A.7]{Namikawa_flop}.
\end{proof}


The variety $X$ has the following properties.

\begin{Lem}\label{Lem:X_properties}
The following is true:
\begin{enumerate}
\item $X$ is a Cohen-Macaulay and Gorenstein
variety with trivial canonical sheaf.
\item $\C[X]=\C[Y]$ and $H^i(X,\mathcal{O}_X)=0$ for $i>0$.
\end{enumerate}
\end{Lem}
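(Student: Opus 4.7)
The plan is to derive both parts from standard properties of symplectic singularities combined with the crepancy hypothesis on $\rho$. Throughout I use that $Y$ and $X$ are both normal (being symplectic varieties in the sense of Beauville) and that $Y$ is affine: the contracting $\C^\times$-action on a conical symplectic singularity forces this, and for $Y=V/\Gamma$ it is obvious.

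For (1), recall that any symplectic singularity is automatically Gorenstein with trivial canonical sheaf. The symplectic form $\omega$ on $Y^{reg}$ trivializes $\omega_{Y^{reg}}$, and since $\operatorname{codim}_Y(Y\setminus Y^{reg})\geqslant 2$ and $Y$ is normal this section extends to a global trivialization of $\omega_Y$. Crepancy of $\rho:X\to Y$ then gives $\omega_X\cong \rho^*\omega_Y\cong \mathcal{O}_X$, proving the Gorenstein and trivial-canonical assertions. The Cohen--Macaulay property follows because $X$ has terminal singularities, which in characteristic zero are rational (Elkik, Kempf) and in particular Cohen--Macaulay.

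For (2), the essential intermediate statement is $R\rho_*\mathcal{O}_X=\mathcal{O}_Y$. Pick any resolution $\sigma:\tilde X\to X$ and set $\pi:=\rho\circ\sigma:\tilde X\to Y$, which is then a resolution of $Y$. Since $X$ has rational singularities, $R\sigma_*\mathcal{O}_{\tilde X}=\mathcal{O}_X$. Since $Y$, as a symplectic singularity, also has rational singularities (Beauville), $R\pi_*\mathcal{O}_{\tilde X}=\mathcal{O}_Y$. Composing,
$$R\rho_*\mathcal{O}_X=R\rho_*R\sigma_*\mathcal{O}_{\tilde X}=R\pi_*\mathcal{O}_{\tilde X}=\mathcal{O}_Y.$$
Now pass to cohomology on $Y$: because $Y$ is affine, $H^i(Y,\mathcal{O}_Y)=0$ for $i>0$ and $H^0(Y,\mathcal{O}_Y)=\C[Y]$. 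The Leray spectral sequence therefore degenerates and yields $H^i(X,\mathcal{O}_X)=H^i(Y,R\rho_*\mathcal{O}_X)=H^i(Y,\mathcal{O}_Y)$, which gives both $\C[X]=\C[Y]$ and the higher vanishing.

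The argument is essentially a chain of standard results, and the only real ``obstacle'' is locating the correct citations: Beauville for the fact that symplectic singularities are rational Gorenstein with trivial canonical sheaf, and Elkik/Kempf for terminal $\Rightarrow$ rational. No delicate computation is required; everything ultimately reduces to the identification $R\rho_*\mathcal{O}_X=\mathcal{O}_Y$ obtained by sandwiching $X$ between a resolution of $\tilde X$ and the affine base $Y$.
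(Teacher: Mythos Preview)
Your argument is correct and is precisely the standard justification one would expect; the paper itself does not give a proof at all, merely noting before the lemma that ``all of them are consequences of $\rho$ being crepant.'' So you are supplying the omitted details, and your route---trivial $\omega_Y$ via the symplectic form plus crepancy for (1), and the sandwich $R\rho_*\mathcal{O}_X=\mathcal{O}_Y$ via rational singularities of both $X$ and $Y$ for (2)---is exactly the intended one.
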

\begin{proof}
(1) follows from (2) of Proposition \ref{Prop:Q_fac_terminalization}.
Now (2) follows from (1) of this lemma and the Grauert-Riemenschneider
theorem.
\end{proof}

We will also need the following property of $\rho$ established, for example, in the proof
of \cite[Proposition 2.14]{orbit}.

\begin{Lem}\label{Lem:ssmall}
The morphism $\rho$ is semismall.
\end{Lem}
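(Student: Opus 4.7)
The plan is to exploit the symplectic form on $X^{reg}$ to bound fiber dimensions via an isotropy argument, after reducing to a local question on a conical symplectic singularity with a single singular point via Namikawa's formal product structure.

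First, by Kaledin's theorem $Y$ admits a finite stratification $Y = \bigsqcup_\alpha \Sigma_\alpha$ by symplectic leaves, and semismallness of $\rho$ is equivalent to $2\dim\rho^{-1}(y) \leq \operatorname{codim}_Y \Sigma_\alpha$ for every $y \in \Sigma_\alpha$. Fixing such a $y$, Namikawa's local product theorem supplies a formal isomorphism $\widehat{Y}_y \cong \widehat{\Sigma}_\alpha \mathbin{\widehat{\times}} \widehat{Z}$, where $Z$ is a conical symplectic singularity with a unique singular fixed point $0$ and $\dim Z = \operatorname{codim}_Y \Sigma_\alpha$. This decomposition lifts to the $\Q$-factorial terminalization, giving $\widehat{\rho}^{-1}(\widehat{Y}_y) \cong \widehat{\Sigma}_\alpha \mathbin{\widehat{\times}} \widehat{W}$ where $\sigma: W \to Z$ is a $\Q$-factorial terminalization. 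Since $\rho^{-1}(y) \cong \sigma^{-1}(0)$, the task reduces to proving $\dim \sigma^{-1}(0) \leq \frac{1}{2}\dim Z$ for every such $\sigma$.

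For this bound, I would use the symplectic form. Because $\sigma$ is projective, birational, crepant, and $Z \setminus \{0\}$ is smooth, $\sigma$ restricts to an isomorphism $W \setminus \sigma^{-1}(0) \xrightarrow{\sim} Z \setminus \{0\}$, so $\sigma^{-1}(0)$ is the entire exceptional locus. On the open subset $W^{reg} \cap \sigma^{-1}(Z \setminus \{0\})$ the symplectic form $\omega_W$ coincides with $\sigma^* \omega_Z$, and this identity extends by continuity to all of $W^{reg}$. Consequently, at any smooth point $x$ of an irreducible component $F \subset \sigma^{-1}(0)$ lying in $W^{reg}$, any tangent vectors $v, w \in T_x F$ satisfy $d\sigma_x v = d\sigma_x w = 0$, forcing $\omega_W(v,w) = 0$. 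Hence $F$ is isotropic in the symplectic manifold $W^{reg}$, so $\dim F \leq \frac{1}{2}\dim W = \frac{1}{2}\dim Z$.

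The main obstacle is handling irreducible components of $\sigma^{-1}(0)$ whose generic points lie in $W^{sing}$, where the isotropy argument does not directly apply. I would treat these by descending induction along the symplectic leaf stratification of $W$, which is finite by Kaledin's theorem: each symplectic leaf $\Sigma' \subset W^{sing}$ has a formal transverse slice that is itself a conical symplectic singularity, terminalized by a corresponding slice of $W$, and the inductive hypothesis bounds the dimension of the part of $\sigma^{-1}(0)$ lying in the closure of $\Sigma'$. The $\Q$-factorial terminal hypothesis is essential here: terminality of a symplectic variety forces $\operatorname{codim}_W W^{sing} \geq 4$ at each stage, controlling the base of the induction. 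Assembling the contributions from all strata yields the required estimate and hence semismallness of $\rho$.
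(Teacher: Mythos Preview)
The paper does not give its own proof of this lemma; it simply records that the statement is established in the proof of \cite[Proposition 2.14]{orbit}. So there is no argument in the paper to compare against, and your attempt must stand on its own.

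Your reduction via the Kaledin--Namikawa product decomposition is sound, but the core of the argument has real gaps. First, the assertion that ``$Z\setminus\{0\}$ is smooth'' is false: the transverse slice $Z$ is a conical symplectic singularity whose unique zero-dimensional leaf is $\{0\}$, but $Z$ may well have other non-open leaves (take for instance $Z=(\C^2/\pm1)\times(\C^2/\pm1)$). So $\sigma$ need not be an isomorphism over $Z\setminus\{0\}$, and $\sigma^{-1}(0)$ need not be the full exceptional locus. Second, and more seriously, the isotropy step does not follow from your continuity claim. You correctly note that $\omega_W$ agrees with $\sigma^*\omega_Z$ on $\sigma^{-1}(Z^{reg})$ and that $\omega_W$ is regular on all of $W^{reg}$; but at a point $x\in\sigma^{-1}(0)\cap W^{reg}$ the form $\omega_Z$ is \emph{not defined} at $\sigma(x)=0$, so the formula ``$\omega_W(v,w)=\omega_Z(d\sigma_x v,d\sigma_x w)$'' is meaningless there. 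The extension of $\sigma^*\omega_Z$ to $W^{reg}$ is just $\omega_W$, and there is no reason it should remain a pullback over the singular locus of $Z$. In fact, a direct computation with the Poisson structure shows only that $\ker(d\sigma_x)$ is \emph{coisotropic} (because the Poisson bivector of $Z$ vanishes at $0$), which gives the wrong inequality. A correct isotropy argument requires something further --- e.g.\ passing to a genuine resolution $\tilde W\to W$ and invoking either the exactness of $\omega$ in the conical setting or the vanishing of holomorphic forms on rationally chain-connected fibers. Finally, your inductive treatment of components contained in $W^{sing}$ is only a sketch: the transverse slice to a leaf $\Sigma'\subset W^{sing}$ is already $\Q$-factorial terminal (hence its own terminalization), so it is not clear what morphism you are inducting on or how the bound $\dim F_0\leq\tfrac12\dim W$ emerges from assembling slice contributions.
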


Now let us discuss fields of definition. Suppose that $Y$ is a conical symplectic singularity
defined over $\overline{\mathbb{Q}}$, the quotient $Y=V/\Gamma$ provides an example.
Let us write $Y^{sr}$ for the union of the open
and all codimension two leaves in $Y$. Set $X^{sr}:=X\times_Y Y^{sr}$. This is an open
subvariety of $X$ whose complement has codimension at least 2 by Lemma \ref{Lem:ssmall}.
The restriction of $\rho$ to $X^{sr}$ is the minimal resolution of singularities. It follows
that $X^{sr}$ and $\rho|_{X^{sr}}$ are defined over $\overline{\mathbb{Q}}$.
Since $X$ is normal and the codimension of the complement to $X^{sr}$ is at least two,
it follows that $(X,\rho)$ are defined over $\overline{\Q}$.

It follows that $\rho:X\rightarrow Y$ is defined over some finite integral extension $R$ of a finite
localization of $\Z$. So for each prime $p$ which is large enough we can consider
a reduction $\rho_{\F}:X_{\F}\rightarrow Y_{\F}$ mod $p$, where $\F$ is an algebraically
closed field of characteristic $p$. The triple $(X_{\F},Y_{\F},\rho_{\F})$ is defined over
a finite field $\F_q$ for all $q=p^m$ with $m$ sufficiently large. Lemmas \ref{Lem:X_properties},\ref{Lem:ssmall} still hold.

Let us now recall the Namikawa-Weyl group, \cite{Namikawa2}. Let $Y$ be a conical symplectic singularity.
The Namikawa-Weyl group is defined as follows. Let $L_1,\ldots,L_k$ be the codimension
$2$ symplectic leaves of $Y$. The formal slice to each $L_i$ is a Kleinian singularity
and so gives rise to the Cartan space $\tilde{\mathfrak{h}}_i$ and the Weyl group $\tilde{W}_i$
of the corresponding ADE type. The fundamental group $\pi_1(L_i)$ acts on $\tilde{\mathfrak{h}}_i,
\tilde{W}_i$ by monodromy. Set $\mathfrak{h}_i=\tilde{\mathfrak{h}}_i^{\pi_1(L_i)},
W_i=\tilde{W}_i^{\pi_1(L_i)}$. By the Namikawa-Weyl group $W_Y$ we mean the product
$\prod_{i=1}^k W_i$, it acts  on $\h:=H^2(X^{reg},\C)=H^2(Y^{reg},\C)\oplus \bigoplus_{i=1}^\ell
\mathfrak{h}_i$ as a crystallographic reflection group.

Now let us discuss Poisson deformations of $X,Y$, see \cite{Namikawa1},\cite{Namikawa2},\cite{Namikawa_bir_geom}.
First, there is a universal Poisson deformation
$X_\h$ of $X$ over $\h$, it comes with a contracting $\C^\times$-action that restricts
to the contracting $\C^\times$-action on $X$ and induces a scaling action on $\h$.
The affinization  $Y_\h:=\operatorname{Spec}(\C[X_\h])$ of $X_\h$ is a deformation of $Y$ over $\h$.
Let us write $X_\lambda,Y_\lambda$ for the fibers of  $X_\h,Y_\h$ over $\lambda\in \h$.
We have $X_\lambda\twoheadrightarrow Y_\lambda$. The locus of $\lambda\in \h$, where $X_\lambda\rightarrow Y_\lambda$
is not an isomorphism, can be shown to be a union of hyperplanes, we denote it by $\h^{sing}$.

Using the Namikawa-Weyl group and the hyperplane arrangement $\h^{sing}$ we can also
classify all $\Q$-factorial terminalizations of  $Y$.
Consider the subspace $\h_{\R}$ of real points in $\h$. The hyperplane arrangement
$\h^{sing}$ splits $\h_{\R}$ into a $W_Y$-stable union of cones to be called
{\it chambers}. The following
is the main result of \cite{Namikawa_bir_geom}.

\begin{Lem}\label{Lem:Q_fac_classification}
The movable cone for $Y$ is a fundamental chamber for $W_Y$. Moreover, the $\Q$-factorial
terminalizations are classified by the chambers inside the movable cone, where we
send a $\Q$-factorial terminalization to its ample cone.
\end{Lem}

Below we will need to understand the formal deformations of formal neighborhoods $X^{\wedge_x}$
of $x\in X$ coming from $X_\h$. The idea of the proof of the next lemma was explained to
me by Kaledin. Let us write $\h^{\wedge_0}$ for the formal neighborhood of $0$ in $\h$.

\begin{Lem}\label{Lem:form_deform_induced}
Let $X$ be as above.
Let $X^{\wedge_x}$ and $X_\h^{\wedge_x}$ denote the formal neighborhoods of $x$ in
$X$ and $X_\h$, respectively. Then we have an isomorphism of formal Poisson
$\h^{\wedge_0}$-schemes $X_\h^{\wedge_x}\cong X^{\wedge_x}\times \h^{\wedge_0}$.
\end{Lem}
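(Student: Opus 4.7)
The plan is to show that every formal Poisson deformation of $X^{\wedge_x}$ over $\h^{\wedge_0}$ is trivial; granting this, the induced deformation $X_\h^{\wedge_x}\to\h^{\wedge_0}$ must be isomorphic, as a Poisson $\h^{\wedge_0}$-scheme, to the constant one $X^{\wedge_x}\times\h^{\wedge_0}$. In view of formal smoothness of the Poisson deformation functor, it suffices to establish that the tangent space to formal Poisson deformations of $X^{\wedge_x}$ vanishes.

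First I would invoke Kaledin's formal decomposition theorem to split $X^{\wedge_x}$ as the product, in the category of formal Poisson schemes, of the formal neighborhood of $x$ in the symplectic leaf $\Leaf$ through $x$ and a formal transverse Poisson slice $Z^{\wedge}$. The leaf factor is a formal symplectic polydisk, whose formal Poisson deformations are all trivial by Darboux. So I may replace $X^{\wedge_x}$ by $Z^{\wedge}$ throughout.

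Next I would use that $X$ has terminal singularities. A codimension-two symplectic stratum of $X$ would exhibit a transverse Kleinian singularity, which admits a crepant resolution via the minimal resolution; this is incompatible with terminality of $X$. Consequently the singular locus of $Z^{\wedge}$ has codimension at least four. A formal-local version of Namikawa's theorem identifies the tangent space to formal Poisson deformations of $Z^{\wedge}$ with $H^2((Z^{\wedge})^\circ,\C)$, where $(Z^{\wedge})^\circ$ is the smooth locus of a sufficiently small analytic representative. Such a representative is contractible by normality, and removing a codimension $\geqslant 4$ subset from a smooth variety preserves $H^2$ by local cohomology and purity. Hence the tangent space is zero.

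The main obstacle is running the formal-local version of Namikawa's deformation theory for $Z^{\wedge}$, which a priori lacks the global contracting $\C^\times$-action used in his original arguments. A cleaner route is via Kaledin's Poisson cohomology, which computes first-order Poisson deformations directly from the Poisson cochain complex of $Z^{\wedge}$ and reduces the problem to the same topological vanishing. Once tangent space vanishing is in hand, formal smoothness of the Poisson deformation functor upgrades it to triviality of the entire functor, which produces the desired Poisson $\h^{\wedge_0}$-isomorphism $X_\h^{\wedge_x}\cong X^{\wedge_x}\times\h^{\wedge_0}$.
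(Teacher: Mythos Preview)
Your approach aims at a stronger conclusion than what is needed: you try to show that \emph{all} formal Poisson deformations of $X^{\wedge_x}$ (equivalently, of the transverse slice $Z^{\wedge}$) are trivial, by arguing that $H^2$ of the smooth locus vanishes. The paper instead proves only that the \emph{specific} deformation $X_\h^{\wedge_x}$ is trivial, by showing that the restriction map $H^2_{DR}(X^{reg})\to H^2_{DR}\bigl((X^{\wedge_x})^{reg}\bigr)$ is zero. This uses $\Q$-factoriality of $X$ in an essential way: it identifies $H^2_{DR}(X^{reg})$ with $\operatorname{Pic}(X)\otimes_{\Z}\C$ (not merely $\operatorname{Pic}(X^{reg})\otimes_{\Z}\C$), and then observes that any line bundle on the whole of $X$ trivializes over the local scheme $X^{\wedge_x}$, so its Chern class dies under restriction. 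Thus the period class of the induced deformation is zero and the deformation is trivial.

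Your argument has a genuine gap at the purity step. You write that ``removing a codimension $\geqslant 4$ subset from a smooth variety preserves $H^2$,'' but the space you are removing from is (a representative of) $Z^{\wedge}$, which is \emph{singular} precisely along the set you remove. Purity and the standard local-cohomology vanishing require smoothness of the ambient space along the closed subset; without it, $H^3_{Z^{sing}}(Z)$ can be nonzero even when $\operatorname{codim}\,Z^{sing}\geqslant 4$, and hence $H^2(Z^\circ)$ need not vanish. Whether the formal slice has nontrivial Poisson deformations is exactly the delicate point here; Namikawa's rigidity for $\Q$-factorial terminal symplectic varieties is a global statement and does not automatically transfer to the formal local slice, which is not known a priori to be $\Q$-factorial. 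The paper sidesteps this entirely by never asserting that $H^2\bigl((X^{\wedge_x})^{reg}\bigr)=0$, only that the class restricted from $X^{reg}$ is zero.
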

\begin{proof}
The formal Poisson deformations of $X^{\wedge_x}$ are classified by $H^2_{DR}(\left(X^{\wedge_x}\right)^{reg})$,
this is proved similarly to the main result of \cite{Namikawa1}, see the introduction there.
So we need to show that the pull-back map $H^2_{DR}(X^{reg})\rightarrow
H^2_{DR}(\left(X^{\wedge_x}\right)^{reg})$ is zero. Note that the natural map
$\C\otimes_{\Z}\operatorname{Pic}(X)\rightarrow\C\otimes_{\Z}\operatorname{Pic}(X^{reg})$
is an isomorphism because $X$ is $\Q$-factorial. Also note that the Chern character map induces an
isomorphism $\C\otimes_{\Z}\operatorname{Pic}(X^{reg})\xrightarrow{\sim} H^2_{DR}(X^{reg})$,
compare to the beginning of \cite[Section 2.3]{BPW}.
Clearly, the restriction of a line bundle on $X$ to $X^{\wedge_x}$ is trivial. It follows that
the map $H^2_{DR}(X^{reg})\rightarrow H^2_{DR}(\left(X^{\wedge_x}\right)^{reg})$
is zero and finishes the proof of the lemma.
\end{proof}


As Namikawa proved, \cite[\S 1]{Namikawa2}, the action of $W$ on $\h$ lifts to $Y_\h$ and $Y_\h/W$ is a universal
$\C^\times$-equivariant Poisson deformation of $Y$.

Now let us discuss the Picard group of $X^{reg}$. These groups are naturally identified
for different choices of $X$, compare to \cite[Proposition 2.18]{BPW}.
As was mentioned in the proof of Lemma \ref{Lem:form_deform_induced}, the Chern character map
$\operatorname{Pic}(X^{reg})\otimes_{\Z}\C\rightarrow \h=H^2(X^{reg},\C)$
is an isomorphism. Let $\h_{\Z}\subset \h$ denote the image
of $\operatorname{Pic}(X^{reg})$.  In the case when $H^2(Y^{reg},\C)=0$ (this holds,
for instance, for $Y=V/\Gamma$, see, e.g., \cite[Lemma 2.4]{Bellamy})
we can describe $\h_{\Z}$ as follows. In each $\h_i$ we have the coweight
lattice and the lattice $\h_{\Z}$ is the direct sum of those.

In our main example, $Y=V/\Gamma$, the codimension 2 symplectic leaves are in bijection with
$\Gamma$-conjugacy classes of subgroups $\underline{\Gamma}\subset \Gamma$ such that $\dim V^{\underline{\Gamma}}=
\dim V-2$. The Kleinian group corresponding to this leaf is $\underline{\Gamma}$ and the fundamental
group of the leaf is $N_{\Gamma}(\underline{\Gamma})/\underline{\Gamma}$. One can show, see, e.g., \cite[Section 3.2, Proposition 2.6]{Bellamy} or
\cite[Section 3.7]{orbit} that $\h$ is naturally identified with $\param$ as a vector space. Namely,
 for $i>0$, the space $\tilde{\h}_i$ is identified with
with the space $\tilde{\param}_i$ of $\Gamma_i$-invariant functions on $\Gamma_i\setminus \{1\}$.
So the space $\param_i$ of $N_\Gamma(\Gamma_i)$-invariant functions is identified with $\h_i$.
It was checked by Bellamy, \cite[Theorem 1.4]{Bellamy}, that the  deformation $Y_\h$ is
$\operatorname{Spec}(e \gr H_\param e)$.

\subsection{Quantizations of $\Q$-factorial terminalizations}\label{SS_quant}
Let again $Y$ be a general conical symplectic singularity. Let us discuss the quantizations of
$X$ and $Y$ following \cite{BPW,orbit}. Let us start with $X$. By the conical topology on $X$
we mean the topology where ``open'' means Zariski open and $\C^\times$-stable. By a quantization
of $X$ we mean a sheaf $\mathcal{D}$ of algebras in the conical topology on $X$ coming with
\begin{itemize}
\item a complete and separated, ascending exhaustive $\Z$-filtration,
\item a graded Poisson algebra isomorphism $\gr\mathcal{D}\xrightarrow{\sim}\mathcal{O}_X$.
\end{itemize}
It was shown in \cite[Sections 3.1,3.2]{BPW} that the filtered quantizations of $X$ are in a canonical bijection
with   $\mathfrak{h}$. Let us denote the quantization corresponding to $\lambda
\in \mathfrak{h}$ by $\mathcal{D}_\lambda$. We also have the universal quantization, $\mathcal{D}_\mathfrak{h}$,
a sheaf (in the conical topology) of filtered algebras on $X$ with the following properties:
\begin{itemize} \item its specialization to $\lambda$ coincides with $\mathcal{D}_\lambda$,
\item it quantizes the universal deformation $X_{\h}$ of $X$ over $\h$.
\end{itemize}
We also note, \cite[Section 2.3]{quant_iso}, that
\begin{equation}\label{eq:iso_oppos}
\mathcal{D}_{-\lambda}\cong \mathcal{D}_\lambda^{opp}.
\end{equation}

We have a quantum analog of Lemma \ref{Lem:form_deform_induced}. Namely, consider the
sheaf $\D_{\lambda\hbar^d}$ of $\C[[\hbar]]$-algebras on $X$ that is the $\hbar$-adic completion
of the Rees sheaf $R_\hbar(\D_\lambda)$. Similarly, we can consider the sheaf $\D_{\h,\hbar}$
of $\C[[\h,\hbar]]$-algebras. The relation between these two sheaves is that $\D_{\lambda\hbar^d}$
is the specialization of $\D_{\h,\hbar}$ to $\lambda\hbar^d$. Now we can consider the restrictions
$\D_{\lambda\hbar^d}^{\wedge_x},\D_{\h,\hbar}^{\wedge_x}$.

\begin{Lem}\label{Rem:formal_quant}
We have $\D_{\h,\hbar}^{\wedge_x}\cong \C[[\h]]\widehat{\otimes}_{\C}\D_{\lambda\hbar^d}^{\wedge_x}$.
\end{Lem}

The proof is similar to that of   Lemma
\ref{Lem:form_deform_induced}

Let us proceed to quantizations of $Y$ (or, equivalently, of the filtered algebra $\C[Y]=\C[X]$).
Since $H^i(X,\mathcal{O}_X)=0$ for $i>0$ by Lemma \ref{Lem:X_properties},
we see that $\A_\lambda=\Gamma(\mathcal{D}_\lambda)$
is a quantization of $\C[Y]$ that coincides with the specialization of $\A_{\h}$ to $\lambda$.
It was shown in \cite[Section 3.3]{BPW} that the group $W=W_Y$ acts on $\A_{\h}$ by filtered algebra
automorphisms so that the induced action on $\h$ coincides with the original one.
Moreover, it was shown in \cite[Proposition 3.5]{orbit} that, under a mild additional assumption,
$\A_{\h}^W$ is a universal filtered quantization of $\C[Y]$, and, in particular, every filtered
quantization of $\C[Y]$ has the form $\A_\lambda$ for some $\lambda\in \h$.
The assumption is that $\C[Y]_i=0$ for $0<i<d$, it holds for $Y:=V/\Gamma$ when
$V^{\Gamma}=\{0\}$.
The universality is understood in the following sense. Let $B$ be a
finitely generated graded commutative $\C$-algebra.
Let $\A_B$ be a filtered $B$-algebra (so that $B\rightarrow A_B$ is a filtered
algebra homomorphism) such that $\gr \A_B$ is a graded Poisson deformation of $\C[Y]$.
Then there is a unique filtered algebra homomorphism $\C[\h]^W\rightarrow B$ and a unique
filtered $B$-algebra isomorphism $B\otimes_{\C[\h]^W}\A_\h^W\rightarrow \A_B$ such that
the associated graded homomorphism gives the identity automorphism of $\C[Y]$. The assumption
$\C[Y]_i=0$ for $0<i<d$ holds for $Y=V/\Gamma$ assuming $V^\Gamma=\{0\}$.

Let us compare the quantizations $\A_\lambda$ and $e H_c e$ of $\C[V]^\Gamma$,
\cite[Section 3.7]{orbit}. There is an affine identification $\lambda\mapsto c(\lambda): \mathfrak{h}
\rightarrow \param$ (whose linear part was mentioned in the end of the previous section) such that
$eH_{\param}e\cong \A_\h$ (an isomorphism of filtered algebras compatible with the
isomorphism $\lambda\mapsto c(\lambda)$). In particular, $\A_\lambda\cong eH_{c(\lambda)}e$
(an isomorphism of filtered quantizations of $\C[V]^\Gamma$).

\subsection{Harish-Chandra bimodules over quantizations of $\C[Y]$}\label{SS_HC_quant}
Let us  discuss  Harish-Chandra (HC) bimodules over quantizations of a conical
symplectic singularity  $Y$. Let $\gamma\in \h$.
By definition, a HC $(\A_{\h},\gamma)$-bimodule  is a bimodule $\B$ that can be equipped with a
bounded below filtration $\B=\bigcup_i \B_{\leqslant i}$ (to be called a good filtration)
such that
\begin{itemize}
\item[(i)]  $[a,b]=\langle\gamma,a\rangle b$ for $a\in \h^*\subset \A_\h$,
\item[(ii)] $[(\A_{\h})_{\leqslant i}, \B_{\leqslant j}]\subset \B_{\leqslant i+j-d}$,
\item[(iii)] $\gr\B$ is a finitely generated $\C[Y_\h]$-module.
\end{itemize}
By a HC $\A_\h$-bimodule we mean a HC $(\A_\h,0)$-bimodule. The category
of HC $(\A_\h,\gamma)$-bimodules will be denoted by $\HC(\A_\h,\gamma)$
(and simply $\HC(\A_\h)$ when $\gamma=0$).

By definition, a HC $\A_{\lambda'}$-$\A_\lambda$-bimodule is a HC $(\A_\h,\lambda'-\lambda)$-bimodule
such that the action of $\h$ on the right factors through $\lambda$. We will write $\HC(\A_{\lambda'},\A_{\lambda})$
for the category of HC $\A_{\lambda'}$-$\A_\lambda$-bimodules.

Under some additional conditions on $Y$, we can define restriction functors between categories
of HC bimodules. Namely, assume that the formal slices to all symplectic leaves in $Y$
are conical, i.e., they come with contracting $\C^\times$-actions that rescale the Poisson
bracket by $t\mapsto t^{-d}$. By the proof of \cite[Proposition A.7]{Namikawa_flop},
we also have  actions on the pre-images of the slice in $X$. We can assume that $d$ is even
and is the same for all slices: otherwise we replace $d$ with a suitable multiple and
rescale the $\C^\times$-actions accordingly.

Pick a symplectic leaf $\mathcal{L}$ in $Y$. Let $\underline{Y}$
be the conical symplectic singularity such that its formal neighborhood at $0$ is
the formal slice to $\mathcal{L}$ at some point $y\in \mathcal{L}$. Arguing similarly
to \cite[Lemma 3.3]{Perv}, we see that the completion $R_\hbar(\A_\lambda)^{\wedge_x}$
of the Rees algebra of $\A_\lambda$ splits into the completed tensor product
$R_\hbar(\underline{\A}_\lambda)^{\wedge_0}\widehat{\otimes}_{\C[[\hbar]]}\mathbb{A}_\hbar(T_x\mathcal{L})^{\wedge_0}$.
Here we write $\mathbb{A}_\hbar(T_x\mathcal{L})$ for the homogenized Weyl algebra
of the symplectic vector space $T_x\mathcal{L}$:  $\mathbb{A}_\hbar(T_x\mathcal{L})=T(T_x\mathcal{L})[\hbar]/(u\otimes v-v\otimes u-\hbar^{d}\omega(u,v))$. Further, we write $\underline{\A}_\lambda$ for the filtered quantization
of $\C[\underline{Y}]$ that corresponds to the restriction of $\lambda\in H^2(X^{reg},\C)$
to $\underline{X}^{reg}$ (where $\underline{X}$ is the $\Q$-factorial terminalization of $\underline{Y}$
that is characterized by the property that $\underline{Y}^{\wedge_0}\times_{\underline{Y}} \underline{X}$
is the preimage of the formal slice to $\mathcal{L}$ in $X$, compare with \cite[Section 3.2]{Perv}).

Note that $\C[Y]^{\wedge_x}$ comes equipped with two derivations, $D,\underline{D}$ that multiply
$\{\cdot,\cdot\}$ by $-d$. Arguing as in the proof of (1) of \cite[Lemma 3.3]{Perv} and
using \cite[Lemma 2.15]{orbit}, we show that all Poisson derivations of $\C[Y]^{\wedge_x}$
are inner. Now we can argue as in \cite[Section 3.3]{Perv}  to construct
the restriction functor $\bullet_{\dagger,x}:\HC(\A_{\lambda'}\operatorname{-}\A_{\lambda})\rightarrow
\HC(\underline{\A}_{\lambda'}\operatorname{-}\underline{\A}_\lambda)$. Direct analogs of
\cite[Lemma 3.5, Lemma 3.6, Lemma 3.7]{Perv} hold (in the case of $Y=V/\Gamma$ Lemmas 3.5 and 3.7 were
established in \cite[Section 3.6]{sraco}, while Lemma 3.6 easily follows from the construction) with the
same proofs. We recall these results here for reader's convenience.

For $\B\in \HC(\A_{\lambda'},\A_{\lambda})$, by the {\it associated variety} $\VA(\B)$ we mean the
support of the $\C[Y]$-module $\gr\B$ in $Y$, where the associated graded is taken with respect
to any good filtration on $\B$.

\begin{Lem}[Lemma 3.5 in \cite{Perv}]\label{Lem:dagger_assoc_var}
For every HC $\A_{\lambda'}$-$\A_\lambda$-bimodule, the associated variety $\VA(\B_{\dagger,x})$ is the unique
conical subvariety in $\underline{Y}$ such that
$\VA(\B_{\dagger,x})\times \mathcal{L}^{\wedge_x}=\VA(\B)^{\wedge_x}$.
\end{Lem}

\begin{Lem}[Lemma 3.6 in \cite{Perv}]\label{Lem:dagger_Tor_Ext}
The functor $\bullet_{\dagger,x}$ intertwines the Tor and the Ext functors. More precisely,
we get the following:
\begin{align*}
&\operatorname{Tor}^{\A_\lambda}_i(\B^1,\B^2)_{\dagger,x}\cong
\operatorname{Tor}^{\underline{\A}_\lambda}_i(\B^1_{\dagger,x}, \B^2_{\dagger,x}),\\
&\operatorname{Ext}_{\A_\lambda}^i(\B^1,\B^2)_{\dagger,x}\cong
\operatorname{Ext}_{\underline{\A}_\lambda}^i(\B^1_{\dagger,x}, \B^2_{\dagger,x}). \end{align*}
The latter equality holds for Ext's of left modules and of right modules.
\end{Lem}

Let us write $\HC_{\overline{\mathcal{L}}}(\A_{\lambda'},\A_{\lambda})$ for the full
subcategory of $\{\B\in \HC(\A_{\lambda'},\A_{\lambda})| \VA(\B)\subset \overline{\mathcal{L}}\}$. We write
$\HC_{fin}(\underline{\A}_{\lambda'},\underline{\A}_\lambda)$ for
the category of finite dimensional bimodules. By Lemma
\ref{Lem:dagger_assoc_var}, the functor
$\bullet_{\dagger}$ maps $\HC_{\overline{\mathcal{L}}}(\A_{\lambda'},\A_{\lambda})$
to $\HC_{fin}(\underline{\A}_{\lambda'},\underline{\A}_\lambda)$.

\begin{Lem}[Lemma 3.7 in \cite{Perv}]\label{Lem:adj_functor_old}
The functor $\bullet_{\dagger,x}:\HC_{\overline{\mathcal{L}}}(\A_{\lambda'},\A_{\lambda})\rightarrow
\HC_{fin}(\underline{\A}_{\lambda'},\underline{\A}_\lambda)$ admits a right adjoint functor,
to be denoted by $\bullet^{\dagger,x}$.
\end{Lem}

Finally, let us discuss supports of HC bimodules in $\h$. Let $\B\in \HC(\A_\h,\gamma)$. By the (right) support
$\operatorname{Supp}^r_\h(\B)$ of $\B$ in $\h$, we mean $\{\lambda\in \h| \B\otimes_{\C[\h]}\C_\lambda\neq 0\}$.
Similarly to \cite[Proposition 2.6]{O_resol}, we get the following result.

\begin{Prop}\label{Prop:support}
The subvariety $\operatorname{Supp}^r_\h(\B)\subset \h$ is closed and its asymptotic cone coincides with
$\operatorname{Supp}_\h(\gr\B)$, where the associated graded is taken with respect to any good filtration on
$\B$.\end{Prop}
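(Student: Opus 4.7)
The plan is to realize both $\operatorname{Supp}^r_\h(\B)$ and $\operatorname{Supp}_\h(\gr\B)$ as the $\hbar_0=1$ and $\hbar_0=0$ fibers of a single $\C^\times$-invariant closed subset $Z\subset\h\times\mathbb{A}^1$, so that both conclusions are immediate from the asymptotic cone formalism.

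Fix a good filtration on $\B$ and form the Rees bimodule $\B^\hbar:=R_\hbar(\B)=\bigoplus_i\B_{\leqslant i}\hbar^i$, a finitely generated graded bimodule over the Noetherian algebra $R_\hbar(\A_\h)$, with specializations $\B^\hbar/(\hbar-1)\B^\hbar=\B$ and $\B^\hbar/\hbar\B^\hbar=\gr\B$. The right $\C[\h]$-action on $\B$ extends to an action of the central subalgebra $R:=R_\hbar(\C[\h])\subset R_\hbar(\A_\h)$; as an ungraded commutative $\C$-algebra, $R\cong\C[\h][\hbar]$, with $\h^*$ in degree $d$ and $\hbar$ in degree $1$, so $\operatorname{Spec} R=\h\times\mathbb{A}^1$ carries the weighted $\C^\times$-action $t\cdot(\lambda,\hbar)=(t^d\lambda,t\hbar)$ with respect to which $\B^\hbar$ is equivariant. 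Set
\[
Z:=\{(\lambda,\hbar_0)\in\h\times\mathbb{A}^1 : \B^\hbar\otimes_R\kappa(\lambda,\hbar_0)\neq 0\}.
\]
Unwinding the Rees construction then gives $Z\cap(\h\times\{\hbar_0\})=\hbar_0^d\cdot\operatorname{Supp}^r_\h(\B)$ for $\hbar_0\neq 0$ and $Z\cap(\h\times\{0\})=\operatorname{Supp}_\h(\gr\B)$, and $Z$ is $\C^\times$-stable.

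Granted closedness of $Z$, both claims follow at once: $\operatorname{Supp}^r_\h(\B)=Z\cap(\h\times\{1\})$ is a slice of a closed set, and the $\hbar_0=0$ fiber of a closed $\C^\times$-invariant subset of $\h\times\mathbb{A}^1$ is by definition the asymptotic cone of its $\hbar_0=1$ fiber, so $\operatorname{Supp}_\h(\gr\B)$ coincides with the asymptotic cone of $\operatorname{Supp}^r_\h(\B)$.

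The main obstacle is therefore closedness of $Z$. The difficulty is that $\B^\hbar$ is finitely generated over the noncommutative ring $R_\hbar(\A_\h)$ but not over the commutative ring $R$ itself, so one cannot directly conclude $\operatorname{Supp}_R(\B^\hbar)=V(\operatorname{Ann}_R(\B^\hbar))$ by Nakayama. To circumvent this, I would equip $\B^\hbar$ with a good filtration as an $R_\hbar(\A_\h)$-module and pass to the associated graded $\gr\B^\hbar$, which is finitely generated over the commutative ring $\gr R_\hbar(\A_\h)\cong\C[Y_\h\times\mathbb{A}^1]$; its support is then a closed $\C^\times$-invariant subset of $Y_\h\times\mathbb{A}^1$. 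Because $\C^\times$ contracts each fiber of $Y_\h\to\h$ onto the corresponding cone point, the image in $\h\times\mathbb{A}^1$ of any $\C^\times$-invariant closed subset of $Y_\h\times\mathbb{A}^1$ equals the preimage under the zero section $\h\times\mathbb{A}^1\hookrightarrow Y_\h\times\mathbb{A}^1$, hence is closed. The standard comparison of fibers of a filtered module with those of its associated graded identifies this closed image with $Z$, following the scheme of \cite[Proposition 2.6]{O_resol}.
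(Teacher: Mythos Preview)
The paper gives no proof of this proposition at all; it simply writes ``Similarly to \cite[Proposition 2.6]{O_resol}, we get the following result.'' Your approach via the Rees module $\B^\hbar$ over $R=\C[\h][\hbar]$, using the $\C^\times$-equivariance to identify the $\hbar_0=1$ and $\hbar_0=0$ fibers with $\operatorname{Supp}^r_\h(\B)$ and $\operatorname{Supp}_\h(\gr\B)$, is exactly the strategy of that reference, so you are doing precisely what the paper does.

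One caveat: your final paragraph on closedness of $Z$ is imprecise as written. Introducing a second ``good filtration'' on the already-graded object $\B^\hbar$ is ambiguous; if you mean the $\hbar$-adic filtration, then $\gr_\hbar\B^\hbar\cong(\gr\B)[\hbar]$ has $R$-support $\operatorname{Supp}_\h(\gr\B)\times\mathbb{A}^1$, which is a closed set \emph{containing} $Z$ but not equal to it. What actually makes the argument go through is the contracting $\C^\times$-action on the fibers of $Y_\h\to\h$: one shows directly that $\operatorname{Supp}_\h(\gr\B)$ is closed (it equals $\operatorname{Supp}_{Y_\h}(\gr\B)\cap\h$, the intersection with the zero section), and then one argues that $\B_\lambda=0$ forces $(\gr\B)\otimes_{\C[\h]}\C_\mu=0$ for $\mu$ near $\lambda$ via a graded Nakayama-type argument using that each filtered piece $\B_{\leqslant i}$ is finite-dimensional. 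This is the content of the cited proposition, and your sketch points in the right direction even if the bookkeeping in the last two sentences should be tightened.
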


\section{Simplicity of $\mathcal{D}_\lambda$}
\subsection{Main conjecture and result}
Let $Y$ be an arbitrary conical symplectic singularity and let $X$ be its $\Q$-factorial terminalization.
Let $\mathcal{D}$ be a quantization of $X$.

We conjecture the following.

\begin{Conj}\label{Conj:symplicity}
The sheaf of algebras $\mathcal{D}$ is simple.
\end{Conj}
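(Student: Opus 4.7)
The plan is to combine the standard simplicity of quantizations on the smooth locus $X^{reg}$ with the local model of $\D^{\wedge_x}$ at singular points provided by Remark \ref{Rem:formal_quant}, and then run an induction on $\dim X$ via formal slices to symplectic leaves. The base case is $\dim X = 2$, where $X$ is smooth (every Kleinian singularity admits a crepant resolution), so $\D$ is a quantization of a smooth symplectic surface and simplicity is classical.

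For the inductive step, let $\mathcal{I}\subset \D$ be a two-sided ideal sheaf. Its restriction to $X^{reg}$ is a two-sided ideal of $\D|_{X^{reg}}$, which is a quantization of a smooth symplectic variety and hence simple, so $\mathcal{I}|_{X^{reg}}$ is either $0$ or all of $\D|_{X^{reg}}$. If $\mathcal{I}|_{X^{reg}}=\D|_{X^{reg}}$ I would argue that $1\in \Gamma(X^{reg},\mathcal{I})$ extends to a global section of $\mathcal{I}$ using the Hartogs-type identity $j_*(\D|_{X^{reg}})=\D$, where $j:X^{reg}\hookrightarrow X$; this follows from $\gr\D=\mathcal{O}_X$, from $X$ being Cohen–Macaulay (Lemma \ref{Lem:X_properties}), and from $\operatorname{codim}_X(X\setminus X^{reg})\geqslant 4$ (terminality), all of which force the same extension statement for the filtered pieces of $\mathcal{I}$ as well. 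So it remains to rule out the case $\mathcal{I}|_{X^{reg}}=0$ with $\mathcal{I}\neq 0$. In this case $\operatorname{Supp}(\D/\mathcal{I})\subset X^{sing}$ is a union of closures of symplectic leaves, each of codimension $\geqslant 4$. Pick a generic point $x$ on such a leaf $\Leaf$, so that the formal slice $\underline{Y}$ is a conical symplectic singularity with $\dim \underline{Y}<\dim X$. By Remark \ref{Rem:formal_quant} the Rees completion $\D_{\lambda\hbar^d}^{\wedge_x}$ decomposes as $\Weyl_{\hbar}(T_x\Leaf)^{\wedge_0}\widehat{\otimes}_{\C[[\hbar]]}\underline{\D}_{\underline{\lambda}\hbar^d}^{\wedge_0}$, where $\underline{\D}_{\underline{\lambda}}$ is the canonical quantization of the $\Q$-factorial terminalization $\underline{X}$ of $\underline{Y}$. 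The inductive hypothesis applied to $\underline{\D}_{\underline{\lambda}}$, combined with simplicity of the homogenized Weyl algebra, would force every nonzero $\C^\times$-stable two-sided ideal in $\D^{\wedge_x}$ to contain $1$; using that $\C^\times$ contracts $X$ to a point, this can be propagated back to $\mathcal{I}$ itself, contradicting $\mathcal{I}|_{X^{reg}}=0$.

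The main obstacle is the passage from simplicity of the affine global quantization $\underline{\D}_{\underline{\lambda}}$ (a statement about a sheaf on $\underline{X}$) to a no-proper-ideals statement for the formal completion $\D^{\wedge_x}$ at a deep stratum, since a nonzero ideal in a sheaf need not remain nonzero after completion at a point and since $\Gamma(\D_\lambda)$ itself need not be a simple ring for special $\lambda$. I expect to handle this by working $\hbar$-adically in the Rees picture: one reduces modulo $\hbar$ to a Poisson ideal in $\mathcal{O}_{\underline{X}}^{\wedge_0}$ and uses the inductive description of such Poisson ideals together with Nakayama's lemma to lift back, much as in the construction of the restriction functors in Section \ref{SS_HC_quant} (cf.\ \cite[Section 3.3]{Perv}, \cite[Section 3.6]{sraco}). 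A secondary subtlety is that simplicity of $\underline{\D}_{\underline{\lambda}}$ may be easier to establish for generic $\underline{\lambda}$; one then uses flatness of the universal quantization $\D_\h$ over $\h$ together with a semicontinuity argument to promote simplicity from a generic parameter to an arbitrary $\lambda$.
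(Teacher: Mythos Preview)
First, note that the paper does \emph{not} prove Conjecture~\ref{Conj:symplicity} in general; it is left open. What the paper proves is the special case $Y=V/\Gamma$ (Theorem~\ref{Thm:simplicity}), and its method is entirely different from yours: it uses the external input that $\A_\lambda\cong eH_ce$ is simple for a Weil generic $\lambda$ (from \cite[Theorem 4.2.1]{sraco}), upgrades this to simplicity of $\D_\lambda$ for Weil generic $\lambda$ via abelian localization (Corollary~\ref{Cor:Weil_gen_simpl}), and then shows that a proper ideal at one $\lambda$ forces a proper ideal at a Weil generic $\lambda'$ by spreading the completion ideal over the universal family $\D_{\h,\hbar}$ (Proposition~\ref{Prop:max_ideal} and Proposition~\ref{Prop:simplicity}). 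The paper explicitly remarks that the generic simplicity input is ``not available for general $Y$''; your proposal is an attempt to bypass this input, which would prove the full conjecture.

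There are two genuine gaps. The minor one is that your two cases are swapped. If $\mathcal{I}|_{X^{reg}}=0$ then $\mathcal{I}=0$: since $\gr\D=\mathcal{O}_X$ and $X$ is normal, the restriction map $\D(U)\to\D(U\cap X^{reg})$ is injective for every open $U$, so a section of $\mathcal{I}$ vanishing on $X^{reg}$ vanishes. The hard case is $\mathcal{I}|_{X^{reg}}=\D|_{X^{reg}}$, equivalently $\operatorname{Supp}(\D/\mathcal{I})\subset X^{sing}$, and here your Hartogs argument fails: $j_*(\D|_{X^{reg}})=\D$ does \emph{not} imply $j_*(\mathcal{I}|_{X^{reg}})\subset\mathcal{I}$ for a subsheaf $\mathcal{I}$ (think of the maximal ideal at a singular point in $\mathcal{O}_X$). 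So the substantive problem is exactly to rule out ideals with $\gr(\D/\mathcal{I})$ supported on a proper leaf closure.

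The serious gap is that your induction does not close. Your inductive hypothesis is that the slice quantization $\underline{\D}_{\underline\lambda}$ is simple \emph{as a sheaf of algebras} on $\underline{X}$. What you need at the completion $\D^{\wedge_x}$ is a statement about two-sided ideals of the \emph{algebra} $\underline{\D}_{\underline\lambda}^{\wedge_0}$ (tensored with a formal Weyl algebra), and sheaf-simplicity gives no such control: a simple sheaf of algebras can and does have non-simple stalks and non-simple global sections (indeed $\A_\lambda=\Gamma(\D_\lambda)$ is typically not simple). Your proposed fix---reduce modulo $\hbar$ to a Poisson ideal in $\mathcal{O}_{\underline{X}}^{\wedge_0}$ and lift back by Nakayama---is circular: since $\underline{X}$ is again singular, $\mathcal{O}_{\underline{X}}^{\wedge_0}$ \emph{does} have nontrivial Poisson ideals (those cutting out leaf closures through $0$), and whether such a Poisson ideal lifts to $\underline{\D}^{\wedge_0}$ is exactly the conjecture for $\underline{X}$, which you have not reduced in any way. (Remark~\ref{Rem:formal_quant}, which you cite, only says the deformation in the $\h$-direction is trivial at $x$; it does not give the Weyl$\,\widehat\otimes\,$slice decomposition you use, and even granting such a decomposition the circularity remains.) The paper's actual argument avoids this by moving in the \emph{parameter} direction rather than the spatial one: it transports a hypothetical ideal at $\lambda$ to one at a Weil generic $\lambda'$, where the independent simplicity of $\A_{\lambda'}$ gives the contradiction.
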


In the case when $X$ is smooth, the conjecture is standard because $\mathcal{O}_X$ has no Poisson ideals.

The main result of this section is a proof of this conjecture in the case when $Y=V/\Gamma$
(Theorem \ref{Thm:simplicity} from the introduction).

The most essential result about $V/\Gamma$ that we use
is that the algebra $\A_\lambda$ is simple for  $\lambda$ outside a countable union of
algebraic subvarieties of $\h$, such parameters are called {\it Weil generic}.
This result follows from \cite[Theorem 4.2.1]{sraco}. This result is not known in the general case,
even when Y is Q-factorial and terminal, in which case there is only one quantization..

\subsection{Abelian localization}\label{SS_ab_loc}
Let $Y$ again be an arbitrary conical symplectic singularity.

Pick $\lambda\in \h$. We can consider the category $\operatorname{Coh}(\mathcal{D}_\lambda)$
of coherent $\mathcal{D}_\lambda$-modules. There is the global section functor
$\Gamma_\lambda: \operatorname{Coh}(\mathcal{D}_\lambda)\rightarrow \A_\lambda\operatorname{-mod}$
that has left adjoint $\operatorname{Loc}_\lambda:=\mathcal{D}_\lambda\otimes_{\A_\lambda}\bullet$.
We say that abelian localization holds for $(X,\lambda)$ if these functors are mutually inverse equivalences.

Note that we have an inclusion $\operatorname{Pic}(X)\hookrightarrow \operatorname{Pic}(X^{reg})$.
Since $X$ is $\Q$-factorial, the induced map $\operatorname{Pic}(X)\otimes_{\Z}\Q\rightarrow \operatorname{Pic}(X^{reg})\otimes_{\Z}\Q$
is an isomorphism. Pick $\chi\in \operatorname{Pic}(X)$ and let $\mathcal{O}_X(\chi)$
denote the corresponding line bundle on $X$.  Abusing the notation we will denote
its image in $\h$ also by $\chi$.

The following result is a direct generalization of results of \cite[Section 5.3]{BPW}.

\begin{Prop}\label{Prop:ab_loc_weak}
Suppose $\chi$ is ample for $X$. For any $\lambda\in \h$, there is $n_0\in \Z$ with the property that abelian localization holds for $(X,\lambda+n\chi)$ for any $n\geqslant n_0$.
\end{Prop}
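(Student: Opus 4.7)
The plan is to follow the strategy of \cite[Section 5.3]{BPW}, adapting it to the possibly singular $\Q$-factorial terminalization $X$. The main tool is the translation bimodule $\D_{\lambda,\chi}$, a $(\D_{\lambda+\chi},\D_\lambda)$-bimodule that quantizes the line bundle $\mathcal{O}_X(\chi)$ and whose associated graded is $\mathcal{O}_X(\chi)$. Following \cite[Section 5.1]{BPW}, one constructs it first on $X^{reg}$ (where the smooth-case arguments apply) and then extends to $X$ via pushforward along the open immersion $X^{reg}\hookrightarrow X$; this extension is well-defined because $X$ is $\Q$-factorial, so $\operatorname{codim}_X(X\setminus X^{reg})\geq 2$ and the relevant sheaves are reflexive. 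Tensoring with $\D_{\lambda+(n-1)\chi,\chi}$ gives a sheaf-level Morita equivalence $\Coh(\D_{\lambda+(n-1)\chi})\xrightarrow{\sim}\Coh(\D_{\lambda+n\chi})$.

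The next step is cohomology vanishing. Since $\rho:X\to Y$ is projective, $Y$ is affine, and $\chi$ is ample, classical Serre vanishing gives an $n_1$ with $H^i(X,\mathcal{O}_X(n\chi))=0$ for $i>0$ and $n\geq n_1$. Using the $\Z$-filtrations on $\D_{\lambda+n\chi}$ and on $\D_{\lambda+(n-1)\chi,\chi}$, whose associated graded sheaves are appropriate twists of $\mathcal{O}_X(n\chi)$, a standard spectral sequence argument transfers this to $R^i\Gamma(\D_{\lambda+n\chi})=0$ and $R^i\Gamma(\D_{\lambda+(n-1)\chi,\chi})=0$ for $i>0$ and $n$ sufficiently large. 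Convergence of the spectral sequence is secured by the contracting $\C^\times$-action, which makes each graded piece finite-dimensional in each weight. With these vanishings, $T_n:=\Gamma(\D_{\lambda+(n-1)\chi,\chi})$ is a filtered $(\A_{\lambda+n\chi},\A_{\lambda+(n-1)\chi})$-bimodule with $\gr T_n=H^0(X,\mathcal{O}_X(n\chi))$, and by a standard filtered/graded argument it implements a Morita equivalence between the two algebras.

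Combining the sheaf-level and algebra-level Morita equivalences yields a climbing principle: once abelian localization holds for $(X,\lambda+n_0\chi)$ it holds for all $(X,\lambda+n\chi)$ with $n\geq n_0$. To seed the induction I would verify directly that for $n$ sufficiently large the unit $\A_{\lambda+n\chi}\to\Gamma(\D_{\lambda+n\chi})$ is an isomorphism (this reduces on associated graded to $\C[Y]\xrightarrow{\sim}H^0(X,\mathcal{O}_X(n\chi))$ for $n\geq n_1$, which holds by $\rho_*\mathcal{O}_X=\mathcal{O}_Y$ together with ampleness) and that every nonzero coherent $\D_{\lambda+n\chi}$-module has nonzero global sections (apply Serre's theorem to the pieces of a good filtration). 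Together these show $\operatorname{Loc}_{\lambda+n\chi}$ and $\Gamma_{\lambda+n\chi}$ are mutually inverse equivalences.

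The main obstacle is handling the singularities of $X$, which prevent one from quoting the smooth-case arguments of \cite[Section 5.3]{BPW} verbatim. One needs to know that (i) the translation bimodules extend across $X\setminus X^{reg}$ with the correct associated graded, and (ii) coherent $\D_\lambda$-modules admit good global filtrations by coherent $\mathcal{O}_X$-modules to which classical Serre vanishing can be applied. Both are controlled by $\Q$-factoriality of $X$, the codimension of the singular locus, and the contracting $\C^\times$-action, but they must be checked with care since they represent the only substantive difference from the smooth case.
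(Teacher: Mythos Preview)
Your argument has two genuine gaps that are not mere technicalities.

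First, the claim that $T_n=\Gamma(\D_{\lambda+(n-1)\chi,\chi})$ ``implements a Morita equivalence between the two algebras'' by ``a standard filtered/graded argument'' is unjustified. There is no such standard argument: a filtered bimodule whose associated graded is $H^0(X,\mathcal{O}_X(\chi))$ (note: not $\mathcal{O}_X(n\chi)$---the parameter does not twist the quantized sheaf) need not be invertible. Indeed, for small $n$ along the ray this is typically \emph{false}, and determining for which $n$ it becomes true is precisely the content of the proposition. The candidate inverse has associated graded $H^0(X,\mathcal{O}_X(-\chi))$, which for ample $\chi$ can be zero, so the naive graded check of $T_n\otimes T_n'\to\A$ fails outright.

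Second, your ``seed'' step does not work. The map $\A_{\lambda+n\chi}\to\Gamma(\D_{\lambda+n\chi})$ is the identity by definition of $\A_\mu$, so it carries no information; and $\gr\D_{\lambda+n\chi}=\mathcal{O}_X$, not $\mathcal{O}_X(n\chi)$. As for faithfulness of $\Gamma$, Serre's theorem gives global generation of $\mathcal{F}(n)$ only for $n$ depending on $\mathcal{F}$; it cannot produce a single $n_0$ valid for \emph{all} coherent $\D_{\lambda+n_0\chi}$-modules at once. A module supported on a contracted fiber of $\rho$ can have vanishing global sections.

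The paper's proof proceeds differently. It uses the criterion of \cite[Proposition~5.13]{BPW}: abelian localization at $\lambda$ follows once the global-section translation bimodules $\A_{\lambda+n\chi,\chi}$ and $\A_{\h,-\chi}|_{\lambda+(n+1)\chi}$ are mutually inverse Morita equivalences for all $n\geqslant 0$. To obtain this for $n\gg 0$, one works with the universal translation bimodules over $\h$ as Harish--Chandra bimodules (Section~\ref{SS_HC_quant}) and invokes Proposition~\ref{Prop:support}: the locus in $\h$ where the Morita-inverse condition fails is Zariski closed with asymptotic cone contained in $\h^{sing}$ (the \emph{asymptotically generic} argument of \cite[Proposition~4.5(2)]{BL}). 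Since $\chi$ is ample it lies outside $\h^{sing}$, so the line $\{\lambda+z\chi\}$ eventually exits the bad locus. This support-theoretic argument over the full parameter space is the essential missing ingredient; pointwise Serre vanishing cannot replace it.
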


In Section \ref{SS_ab_simpl} we will establish a stronger version of this result.

The scheme of proof of Proposition \ref{Prop:ab_loc_weak}
is similar to what is used in \cite{BPW} and is based on translation bimodules.

The line bundle $\mathcal{O}(\chi)$ on $X$ quantizes to a $\mathcal{D}_{\lambda+\chi}$-$\mathcal{D}_\lambda$-bimodule
(that is a sheaf on $X$) to be denoted by $\mathcal{D}_{\lambda,\chi}$. Note that tensoring with $\mathcal{D}_{\lambda,\chi}$ gives an equivalence $\operatorname{Coh}(\mathcal{D}_{\lambda})\xrightarrow{\sim}\operatorname{Coh}(\mathcal{D}_{\lambda+\chi})$,
compare to \cite[Section 5.1]{BPW}. Set $\A_{\lambda,\chi}:=\Gamma(\mathcal{D}_{\lambda,\chi})$, this is an $\A_{\lambda+\chi}$-$\A_\lambda$-bimodule. It is easy to see that it is HC, compare
to \cite[Proposition 6.24]{BPW}.

We can also consider the universal version of $\mathcal{D}_{\lambda,\chi}$, the $\mathcal{D}_{\h}$-bimodule
$\mathcal{D}_{\h,\chi}$, so that $\mathcal{D}_{\lambda,\chi}=\mathcal{D}_{\h,\chi}\otimes_{\C[\h]}\C_\lambda$.
Let $\A_{\h,\chi}:=\Gamma(\mathcal{D}_{\h,\chi})$, it is an object of $\operatorname{HC}(\A_\h,\chi)$.

The following lemma was established in \cite[Section 6.3]{BPW} in the case when $X$ is smooth.
Recall that, for different $\Q$-terminalizations $X,X'$, the groups $\operatorname{Pic}(X^{reg})$
and $\operatorname{Pic}(X'^{reg})$ are naturally identified.

\begin{Lem}\label{Lem:transl_properties}
The following claims are true.
\begin{enumerate}
\item Let $X,X'$ be two $\Q$-terminalizations of $Y$.
If $\chi\in \operatorname{Pic}(X^{reg})$ is contained in both
$\operatorname{Pic}(X),\operatorname{Pic}(X')$, then the bimodule
$\A_{\h,\chi}$ is the same for $X$ and $X'$.
\item Suppose $H^1(X,\mathcal{O}(\chi))=0$. Then $\A_{\lambda,\chi}=\A_{\h,\chi}\otimes_{\C[\h]}\C_\lambda$.
\end{enumerate}
\end{Lem}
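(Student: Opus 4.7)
The plan is to reduce both claims to statements on a common big open subset, exploiting that any two $\Q$-factorial terminalizations of $Y$ are isomorphic in codimension one and that sections of filtered quantization sheaves on the normal variety $X$ satisfy Hartogs-type extension across codimension-$2$ subvarieties.

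For (1), let $U_X \subset X$ and $U_{X'} \subset X'$ be the opens whose complements have codimension $\geqslant 2$ and which are identified by the small birational map between $X$ and $X'$ over $Y$; call the resulting common smooth symplectic variety $U$. The line bundles $\mathcal{O}_X(\chi)|_{U_X}$ and $\mathcal{O}_{X'}(\chi)|_{U_{X'}}$ both correspond to the class $\chi \in \operatorname{Pic}(U) = \operatorname{Pic}(X^{reg})$. Since a quantization bimodule attached to a line bundle on a smooth conical symplectic variety is uniquely determined up to canonical isomorphism by its period, we obtain $\mathcal{D}^X_{\h,\chi}|_{U_X} \cong \mathcal{D}^{X'}_{\h,\chi}|_{U_{X'}}$ as $\mathcal{D}_\h$-bimodules. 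The equality $\Gamma(X, \mathcal{D}^X_{\h,\chi}) = \Gamma(U_X, \mathcal{D}^X_{\h,\chi}|_{U_X})$ (and similarly for $X'$) then follows because the filtration on $\mathcal{D}^X_{\h,\chi}$ has associated graded pieces that are finite direct sums of copies of $\mathcal{O}_X(\chi)$: Hartogs extension applies to each such piece on the normal variety $X$, and one passes to the filtered sheaf via the Rees construction.

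For (2), I would use flatness of $\mathcal{D}_{\h,\chi}$ over $\C[\h]$ and the Koszul resolution of $\C_\lambda$ to obtain a resolution of $\mathcal{D}_{\lambda,\chi}$ by copies of $\mathcal{D}_{\h,\chi}$. The hypercohomology spectral sequence reduces the desired identification $\A_{\lambda,\chi} = \A_{\h,\chi} \otimes_{\C[\h]} \C_\lambda$ to the vanishing of $H^{>0}(X, \mathcal{D}_{\h,\chi})$, which by the filtration argument from (1) reduces to $H^{>0}(X, \mathcal{O}_X(\chi)) = 0$. The hypothesis handles the case $i=1$; for $i \geqslant 2$ one invokes appropriate vanishing results for the crepant projective morphism $\rho:X\to Y$ onto the affine variety $Y$, taking Lemma \ref{Lem:X_properties}(2) as a prototype.

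The main obstacle is the technical transfer of cohomology vanishing from $\gr \mathcal{D}_{\h,\chi}$ to $\mathcal{D}_{\h,\chi}$ itself, given that the filtration is infinite but exhaustive and complete. The cleanest route is via the Rees sheaf $R_\hbar(\mathcal{D}_{\h,\chi})$ together with its $\hbar$-adic completeness and the contracting $\C^\times$-action on $X$: cohomology decomposes into finite-dimensional weight spaces, each of which is a subquotient of graded pieces on which vanishing is known, and so vanishes itself.
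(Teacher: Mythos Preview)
The paper does not actually supply a proof of this lemma; it only remarks that the smooth case is in \cite[Section 6.3]{BPW} and then moves on. So there is nothing to compare against directly, and your proposal has to stand on its own.

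Your argument for (1) is essentially the right one and matches what is done in \cite{BPW}: restrict to the common codimension-$1$ open $U$, identify the quantized line bundles there, and recover global sections by Hartogs on the normal variety $X$. One point you pass over too quickly is why $\mathcal{D}^X_{\h,\chi}|_U$ and $\mathcal{D}^{X'}_{\h,\chi}|_U$ agree: you first need that the two universal quantizations $\mathcal{D}^X_\h$, $\mathcal{D}^{X'}_\h$ themselves restrict to the same sheaf on $U$ (this follows from the period classification of quantizations), and then that a line bundle on a smooth symplectic variety with a fixed quantization admits a \emph{unique} quantization as a bimodule. Both facts are standard but should be stated.

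For (2) there is a genuine gap. Your hypercohomology route does reduce the question to $H^{>0}(X,\mathcal{D}_{\h,\chi})=0$, and hence to $H^{>0}(X,\mathcal{O}_X(\chi))=0$; but then you assert that the cases $i\geqslant 2$ follow from ``appropriate vanishing results for the crepant projective morphism $\rho$''. No such vanishing holds for an arbitrary line bundle $\chi$: Grauert--Riemenschneider gives $R^i\rho_*\omega_X=0$, which in our situation only yields $H^{>0}(X,\mathcal{O}_X)=0$, not $H^{>0}(X,\mathcal{O}_X(\chi))=0$. For anti-ample $\chi$ the higher cohomology is typically nonzero. The fix is not to prove the higher vanishing but to avoid needing it. Instead of the full Koszul resolution, specialize one coordinate of $\h$ at a time: from the short exact sequence $0\to M_{i-1}\xrightarrow{z_i-\lambda_i}M_{i-1}\to M_i\to 0$ with $M_j:=\mathcal{D}_{\h,\chi}/(z_1-\lambda_1,\ldots,z_j-\lambda_j)$, you only need $H^1(X,M_{i-1})=0$ at each step. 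Each $M_{i-1}$ is filtered with associated graded a coherent sheaf on a partial deformation of $X$, whose graded pieces under the contracting $\C^\times$-action are iterated extensions of $\mathcal{O}_X(\chi)$. Thus the single hypothesis $H^1(X,\mathcal{O}_X(\chi))=0$ already forces $H^1(X,M_{i-1})=0$ for every $i$, and the claim follows.
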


We provide the proof for reader's convenience.

\begin{proof}
We start with (1).  Consider the morphisms $X_{\h}\rightarrow Y_\h, X'_\h\rightarrow Y_\h$.
Both are isomorphisms over $\h^{sing}$. The locus of all points $y\in Y_\h$ such that
$X_\h\rightarrow Y_\h$ is an  isomorphism over an open neighborhood of $y$ consists
of all points with a $\Q$-factorial terminal singularity in the fiber of $Y_\h\rightarrow \h$.
The  locus of all such $y$ is open,
it will be denoted by $Y^0_\h$. Note that $Y^0_\h\cup \h^{sing}\times_\h Y_\h=Y_\h$
and that the open leaf in $Y_\lambda$ lies in $Y^0_\h$ for every $\lambda\in \h$.
It follows that $\operatorname{codim}_{Y_\h}Y_\h\setminus Y_\h^0\geqslant 3$,
hence $H^1(Y_{\h}^0, \mathcal{O})=0$. Therefore, two quantizations of the same
line bundle on $Y_\h^0$ are isomorphic.

Now let $\A_{\h,\chi},\A'_{\h,\chi}$ be the bimodules coming from $X,X'$.
By the previous paragraph, the microlocaluztions of $\A_{\h,\chi},\A'_{\h,\chi}$
to $Y_\h^0$ are isomorphic. Also the analysis of the previous paragraph
implies that the codimension of  $X_\h\setminus Y_\h^0$ in both
$X_\h,X'_\h$ is at least $2$. So $\A_{\h,\chi}=\Gamma(\A_{\h,\chi}|_{Y_\h^0})=\A'_{\h,\chi}$.
This proves (1).

Now we prove (2). From $H^1(X,\mathcal{O}(\chi))=0$ it follows that $H^1(X, \mathcal{D}_{\h_1,\chi})=0$
for any affine subspace $\h_1\subset \h$ containing $\lambda$, where we write $\mathcal{D}_{\h_1,\chi}$
for $\C[\h_1]\otimes_{\C[\h]}\D_{\h,\chi}$. Using the standard long exact sequence in cohomology,
we conclude that $\Gamma(\D_{\h_2,\chi})=\C[\h_2]\otimes_{\C[\h_1]}\D_{\h_1,\chi}$ for any affine
subspace $\h_2\subset\h_1$ such that $\h_2$ has codimension $1$ in $\h_1$. This implies
(2).
\end{proof}


\begin{proof}[Proof of Proposition \ref{Prop:ab_loc_weak}]
We can assume that $H^i(X,\mathcal{O}(n\chi))=0$ for all $n,i>0$.
Similarly to \cite[Proposition 5.13]{BPW}, abelian localization holds for $(X,\lambda)$
provided each $\A_{\lambda+n\chi,\chi}$ is a Morita equivalence and for
all $m>0$ the natural map
$$\A_{\lambda+(m-1)\chi,\chi}\otimes_{\A_{\lambda+(m-1)\chi}}\A_{\lambda+(m-2)\chi,\chi}
\ldots\otimes_{\A_{\lambda+\chi}}\A_{\lambda,\chi}\rightarrow \A_{\lambda,m\chi}$$
is an isomorphism. Similarly to the proof of \cite[Lemma 4.4]{BL}, we see that
the latter will follow if we show that $\A_{\h,-\chi}|_{\lambda+(n+1)\chi}$ is inverse to
$\A_{\lambda+n\chi,\chi}$ for all $n\geqslant 0$.

Following \cite[Section 2.2.5]{BL}, we say that a Zariski open subset  $U\subset\h$ is {\it asymptotically generic},
if the asymptotic cone of $\h\setminus U$ is contained in $\h^{sing}$. Arguing as in the proof of
\cite[Proposition 4.5(2)]{BL}, we see that the locus, where $\A_{\lambda,\chi},\A_{\h,-\chi}|_{\lambda+\chi}$
are mutually inverse Morita equivalences is asymptotically generic. In particular, its intersection with
the line $\{\lambda+z\chi| z\in \C\}$ is nonempty. This finishes the proof.
\end{proof}

\begin{Cor}\label{Cor:Weil_gen_simpl}
Assume that the algebra $\A_\lambda$ is simple for a Weil generic $\lambda$.
Then, for a Weil generic $\lambda\in \h$, the sheaf of algebras $\mathcal{D}_\lambda$ is simple.
\end{Cor}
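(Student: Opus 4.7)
The plan is to reduce simplicity of the sheaf $\mathcal{D}_\lambda$ to simplicity of its global sections $\A_\lambda$ using abelian localization, after arguing that abelian localization holds at Weil generic $\lambda$ (not merely after a translation by $n\chi$). Once this is in place, the standard argument that ab loc plus ring-simplicity of global sections gives sheaf-simplicity will conclude.

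First I would strengthen Proposition \ref{Prop:ab_loc_weak} at Weil generic $\lambda$. The proof of that proposition shows abelian localization at $(X,\lambda)$ follows from two families of conditions: (a) each translation bimodule $\A_{\lambda+n\chi,\chi}$ together with $\A_{\h,-\chi}|_{\lambda+(n+1)\chi}$ gives an inverse pair of Morita equivalences (for $n\geqslant 0$), and (b) for each $m>0$ the natural multiplication map $\A_{\lambda+(m-1)\chi,\chi}\otimes\cdots\otimes\A_{\lambda,\chi}\to\A_{\lambda,m\chi}$ is an isomorphism. The proof shows that, for each fixed $n$ and $m$, the locus in $\h$ where the corresponding condition holds is asymptotically generic, in particular Zariski open and dense. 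Taking the intersection over the countably many $n$ and $m$, we obtain a Weil generic subset $U_{\mathrm{loc}}\subset\h$ on which abelian localization holds for $(X,\lambda)$.

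Intersecting $U_{\mathrm{loc}}$ with the Weil generic locus $U_{\mathrm{simp}}\subset\h$ on which $\A_\lambda$ is simple (which exists by hypothesis), I obtain a Weil generic set on which both properties hold simultaneously. For any such $\lambda$, let $\mathcal{J}\subset\mathcal{D}_\lambda$ be a two-sided ideal. Then $\Gamma(\mathcal{J})$ is a two-sided ideal of $\A_\lambda=\Gamma(\mathcal{D}_\lambda)$, hence equals $0$ or $\A_\lambda$. Since abelian localization holds, the counit $\operatorname{Loc}_\lambda\Gamma_\lambda(\mathcal{J})\xrightarrow{\sim}\mathcal{J}$ is an isomorphism, so $\mathcal{J}=0$ or $\mathcal{J}=\mathcal{D}_\lambda$, proving simplicity.

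The only real obstacle is the first step: verifying that the genericity statements in the proof of Proposition \ref{Prop:ab_loc_weak} are in fact asymptotically generic (hence Zariski dense open), rather than merely nonempty along a single line $\lambda+\C\chi$. This, however, is essentially what the cited argument from \cite[Section 2.2.5]{BL} and \cite[Proposition 4.5(2)]{BL} gives; the key input is that the non-Morita locus and the non-isomorphism locus for the multiplication map have asymptotic cones contained in $\h^{sing}$, so their complements are open dense and their countable union over $n,m$ remains Weil small.
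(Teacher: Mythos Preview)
Your argument is correct, and it is close in spirit to the paper's but organized differently. Both proofs extract from the proof of Proposition~\ref{Prop:ab_loc_weak} that abelian localization holds on a countable intersection of integral translates of an asymptotically generic open set, hence for Weil generic parameters. The difference is in how simplicity is then deduced. The paper passes to the product: it views $\mathcal{D}_\lambda\widehat{\otimes}\mathcal{D}_\lambda^{opp}\cong \mathcal{D}_\lambda\widehat{\otimes}\mathcal{D}_{-\lambda}$ as a quantization of $X\times X$, applies abelian localization there, and uses that the global section functor sends the regular bimodule $\mathcal{D}_\lambda$ to the regular bimodule $\A_\lambda$; since $\Gamma$ is then an equivalence of abelian categories of (bi)modules, simplicity of $\A_\lambda$ transfers directly to $\mathcal{D}_\lambda$. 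You instead stay on $X$, use abelian localization for left modules only, and observe separately that $\Gamma$ carries a two-sided ideal $\mathcal{J}\subset\mathcal{D}_\lambda$ to a two-sided ideal of $\A_\lambda$; the counit isomorphism then forces $\mathcal{J}$ to be trivial. Your route is slightly more elementary in that it avoids introducing $X\times X$ and the corresponding $\Q$-terminalization of $Y\times Y$; the paper's route is a bit more conceptual, treating simplicity as a statement about bimodules and matching it to an equivalence on a bimodule category in one step. Either way the Noetherianity of $\mathcal{D}_\lambda$ (resp.\ of $\mathcal{D}_\lambda\widehat{\otimes}\mathcal{D}_{-\lambda}$) is implicitly used to know that an arbitrary ideal lies in the coherent category to which the equivalence applies.
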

\begin{proof}
Recall, (\ref{eq:iso_oppos}), that $\mathcal{D}_\lambda^{opp}\cong \mathcal{D}_{-\lambda}$.
We can view $\mathcal{D}_\lambda\widehat{\otimes} \mathcal{D}_{-\lambda}$ as a quantization of $X\times X$.

Let us show that $X\times X$ is $\Q$-factorial and terminal. It is normal and it is easy to see that
it has symplectic singularities. Recall that $\operatorname{codim}_X X^{sing}\geqslant 4$.
From here we deduce $\operatorname{codim}_{X\times X}(X\times X)^{sing}\geqslant 4$.
So $X\times X$ is terminal. To prove that $X\times X$ is $\Q$-factorial
we observe that $\operatorname{Cl}(X\times X)=\operatorname{Cl}(X)\oplus \operatorname{Cl}(X)$.
Since $X$ is $\Q$-factorial, then so is $X\times X$. In particular, $X\times X$ is a $\Q$-factorial
terminalization of $Y\times Y$.

The proof of
Proposition \ref{Prop:ab_loc_weak} shows that abelian localization
holds for $(X\times X,(\lambda,-\lambda))$ assuming $\lambda$ is in the intersection of
integral translates of some asymptotically generic Zariski open subset, in particular, when
$\lambda$ is Weil generic. Note that the global section functor sends the regular
bimodule  $\mathcal{D}_\lambda$ to the regular bimodule $\A_\lambda$. The latter is simple,
so is the former.
\end{proof}

\subsection{Leaves in $X$ and two-sided ideals in $\mathcal{D}_\lambda$}

In this section, we will prove technical results that are
analogous to several results obtained in \cite[Section 3]{B_ineq} and will be used
in the proof of Theorem \ref{Thm:simplicity}.

So let $Y$ be a conical symplectic singularity, $X$ be its $\Q$-terminalization,
$\h:=H^2(X^{reg},\C)$.  The variety $X$ has finitely many symplectic leaves, \cite{Kaledin}.
Let $\mathcal{L}$ be one of these leaves.
Note that the $\C^\times$-action preserves $\mathcal{L}$ and the action on the closure of $\mathcal{L}$ is contracting.

Pick a point $x\in \mathcal{L}$. Consider the formal neighborhood $\mathcal{L}^{\wedge_x}$ and its algebra of functions
$\C[\mathcal{L}^{\wedge_x}]$. This is a Poisson algebra. The action of $\C^\times$ on $X$ induces
a derivation of $\C[\mathcal{L}^{\wedge_x}]$ that rescales the Poisson bracket. We call it the Euler derivation
and denote it by $\mathsf{eu}$. Consider the category $\mathcal{C}(\mathcal{L}^{\wedge_x})$ of all finitely
generated Poisson $\C[\mathcal{L}^{\wedge_x}]$-modules that come equipped with an Euler derivation,
compare with \cite[Section 3.2]{B_ineq}. On the other hand, consider the category $\mathcal{C}(\mathcal{L})$
consisting of all finitely generated weakly $\C^\times$-equivariant Poisson $\mathcal{O}_{\mathcal{L}}$-modules.
We have the functor $\bullet_{\dagger,x}: \mathcal{C}(\mathcal{L})\rightarrow \mathcal{C}(\mathcal{L}^{\wedge_x})$
of completing at $x$.

\begin{Lem}\label{Lem:adj_functor}
Assume that the algebraic fundamental group of $\mathcal{L}$ is finite.
The functor $\bullet_{\dagger,x}: \mathcal{C}(\mathcal{L})\rightarrow \mathcal{C}(\mathcal{L}^{\wedge_x})$
admits a right adjoint functor.
\end{Lem}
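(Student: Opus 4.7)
The plan is to construct the right adjoint $\Phi:\mathcal{C}(\mathcal{L}^{\wedge_x})\to\mathcal{C}(\mathcal{L})$ explicitly, using the $\C^\times$-action on $\mathcal{L}$ to integrate the Euler derivation on an object $N\in\mathcal{C}(\mathcal{L}^{\wedge_x})$, and then to verify the adjunction formally. The finiteness of the algebraic fundamental group enters only at the last step, where one spreads a module from a formal neighborhood of $x$ to all of $\mathcal{L}$ while preserving coherence.

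The first step is to integrate $\mathsf{eu}$ to a rational $\C^\times$-action. Since $N$ is a finitely generated Poisson $\C[\mathcal{L}^{\wedge_x}]$-module and $\mathsf{eu}$ acts as a derivation rescaling the bracket by $-d$, a standard argument shows that the $\mathsf{eu}$-locally finite part $N^{\mathrm{lf}}\subset N$ is a dense Poisson submodule over the locally finite subalgebra $\C[\mathcal{L}^{\wedge_x}]^{\mathrm{lf}}\subset\C[\mathcal{L}^{\wedge_x}]$, and that $\mathsf{eu}$ integrates on $N^{\mathrm{lf}}$ to a rational $\C^\times$-action compatible with the Poisson action. Thus $N$ acquires an essentially $\C^\times$-equivariant structure over the formal neighborhood.

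The second step is to spread this to a $\C^\times$-invariant étale neighborhood of the orbit $\C^\times\cdot x\subset\mathcal{L}$. By Sumihiro's theorem applied to the smooth $\C^\times$-variety $\mathcal{L}$, such a neighborhood has an étale presentation of the form $\C^\times\times_{\Gamma_x}\Sigma$, where $\Gamma_x\subset\C^\times$ is the (finite) stabilizer and $\Sigma$ is a transversal slice whose formal completion at $x$ is identified with the formal slice inside $\mathcal{L}^{\wedge_x}$. Using the $\C^\times$-equivariant data from the first step, $N$ produces a finitely generated weakly $\C^\times$-equivariant Poisson module on this étale neighborhood.

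The third step is to extend globally using the finiteness of $\pi_1^{\mathrm{alg}}(\mathcal{L})$. Let $\widetilde{\mathcal{L}}\to\mathcal{L}$ be the algebraic universal cover; by hypothesis this is finite étale and Galois with finite Galois group $G$, and it carries a canonical lift of the $\C^\times$-action. Cover $\widetilde{\mathcal{L}}$ by $\C^\times$-invariant étale open sets of the type constructed in the second step, and glue the corresponding modules to a finitely generated $\C^\times$-equivariant Poisson $\mathcal{O}_{\widetilde{\mathcal{L}}}$-module; taking $G$-invariants (that is, descending along $\widetilde{\mathcal{L}}\to\mathcal{L}$) yields $\Phi(N)\in\mathcal{C}(\mathcal{L})$, and finite generation is preserved because $G$ is finite. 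The adjunction $\Hom_{\mathcal{L}}(M,\Phi(N))\cong\Hom_{\mathcal{L}^{\wedge_x}}(M_{\dagger,x},N)$ is then a formal consequence of the construction: a morphism on the right-hand side spreads uniquely, by the same procedure, to a morphism on the left, and the inverse assignment is the completion functor.

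The main obstacle I expect is the third step: carrying out the gluing and descent functorially in the Poisson-equivariant category and verifying that finite generation survives. The hypothesis on $\pi_1^{\mathrm{alg}}(\mathcal{L})$ is precisely what makes the universal algebraic cover finite, so that descent along it preserves coherence; without this assumption one would at best obtain an ind-coherent right adjoint.
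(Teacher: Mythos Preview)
Your proposal has a genuine gap in the third step. You construct a module on a single $\C^\times$-invariant \'etale neighborhood of the orbit $\C^\times\cdot x$, and then you write ``cover $\widetilde{\mathcal{L}}$ by $\C^\times$-invariant \'etale open sets of the type constructed in the second step, and glue the corresponding modules.'' But there are no ``corresponding modules'' on the other open sets: the input $N$ lives only near $x$, and you have given no mechanism for producing compatible data elsewhere on $\widetilde{\mathcal{L}}$. Gluing requires something to glue. Your step~2 is also not innocent: passing from a module on a \emph{formal} neighborhood to one on an \emph{\'etale} neighborhood is an algebraization step that needs an argument, not a citation of Sumihiro.

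The missing idea, which the paper exploits directly, is a structural trivialization. Since $\mathcal{L}$ is smooth and symplectic, a coherent Poisson $\mathcal{O}_{\mathcal{L}}$-module is the same thing as a vector bundle with flat connection; on the simply connected cover $\widetilde{\mathcal{L}}$ any such object is of the form $\mathcal{O}_{\widetilde{\mathcal{L}}}\otimes V$ for a finite-dimensional $\C^\times$-module $V$, and likewise any $N\in\mathcal{C}(\mathcal{L}^{\wedge_x})$ is $\C[\mathcal{L}^{\wedge_x}]\otimes V'$ for a finite-dimensional $(V',\mathsf{eu}|_{V'})$. With this in hand the completion functor upstairs becomes the forgetful functor $V\mapsto V'$ from $\C^\times$-modules to vector spaces with an endomorphism, whose right adjoint is ``take the maximal subspace on which the operator is diagonalizable with integral eigenvalues''; composing with equivariant pushforward along $\pi$ gives the right adjoint downstairs. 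This is what actually replaces your gluing: once Poisson modules are globally trivialized on $\widetilde{\mathcal{L}}$, there is nothing to glue, and the finiteness of $\pi_1^{\mathrm{alg}}(\mathcal{L})$ is used only to ensure that $\pi_*'$ preserves coherence. If you insert this trivialization explicitly, your outline collapses to the paper's argument; without it, the extension from a neighborhood of one orbit to all of $\widetilde{\mathcal{L}}$ is simply undefined.
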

\begin{proof}
Let $\pi:\tilde{\mathcal{L}}\rightarrow \mathcal{L}$ denote the universal algebraic cover that exists
because the algebraic fundamental group is finite. The action of $\C^\times$ lifts to
$\tilde{\mathcal{L}}$ possibly after replacing the given $\C^\times$ with a covering $\C^\times$.
So we can consider the category $\mathcal{C}(\tilde{\mathcal{L}})$ that comes
with adjoint functors $\pi^*: \mathcal{C}(\mathcal{L})\rightarrow \mathcal{C}(\tilde{\mathcal{L}}),
\pi'_*:\mathcal{C}(\tilde{\mathcal{L}})\rightarrow \mathcal{C}(\mathcal{L})$, where we write
$\pi'_*$ for the equivariant descent. We still have the functor $\bullet_{\tilde{\dagger},x}: \mathcal{C}(\tilde{\mathcal{L}})\rightarrow \mathcal{C}(\mathcal{L}^{\wedge_x})$
that satisfies $\bullet_{\tilde{\dagger},x}\circ \pi^*\cong \bullet_{\dagger,x}$. So it is enough
to show that $\bullet_{\tilde{\dagger},x}$ admits a right adjoint functor, say $\bullet^{\tilde{\dagger},x}$.
Then the  right adjoint to $\bullet_{\dagger,x}$ is given by $\pi'_*\circ \bullet^{\tilde{\dagger},x}$.

Arguing as in Step 3 of the proof of \cite[Lemma 3.9]{B_ineq}, we see that every object
$M\in\mathcal{C}(\tilde{\mathcal{L}})$ is of the form $\mathcal{O}_{\tilde{\mathcal{L}}}\otimes V$,
where $V$ is a finite dimensional rational representation of
$\C^\times$. Similarly, any object  $N\in\mathcal{C}(\mathcal{L}^{\wedge_x})$ is of the form $\C[\mathcal{L}^{\wedge_x}]\otimes V'$, where $V'$ is a finite dimensional vector space with a linear operator
($V'$ arises as the Poisson center of $N$, the linear operator is obtained by restricting the Euler derivation).
The functor $\bullet_{\tilde{\dagger},x}$ becomes $V\mapsto V'$ (where we take the linear operator coming by differentiating the $\C^\times$-action). This functor clearly has right adjoint (that sends $V'$ to the sum of all eigenspaces
with integral eigenvalues).
\end{proof}

Let us give corollaries of this lemma. The first concerns two-sided ideals in quantizations.
Note that the formal quantization $\D_{\h,\hbar}^{\wedge_x}$ of $X_\h^{\wedge_x}$ again comes
equipped with an Euler derivation $\mathsf{eu}$ (that rescales $\hbar$). Let
$\mathcal{I}_\hbar\subset \mathcal{D}_{\h,\hbar}^{\wedge_x}$ be an $\mathsf{eu}$-stable two-sided ideal such that
$\mathcal{D}_{\h,\hbar}^{\wedge_x}/\mathcal{I}_\hbar$ is finitely generated over $\C[[\h]]\widehat{\otimes} \Weyl^{\wedge_0}_\hbar(T_x\mathcal{L})$ (here the second factor is the completed homogenized
Weyl algebra of the symplectic vector space $T_x\mathcal{L}$).


\begin{Prop}\label{Prop:max_ideal}
Assume that the algebraic fundamental group of $\mathcal{L}$ is finite.
Then there is the largest (with respect to inclusion) sheaf of ideals
$\mathcal{J}_\hbar\subset \mathcal{D}_{\h,\hbar}$ whose completion
at $x$ is contained in $\mathcal{I}_{\hbar}$. This ideal has the following properties:
\begin{enumerate}
\item It is $\C^\times$-stable.
\item The intersection of the support of $\mathcal{D}_{\h,\hbar}/\mathcal{J}_\hbar$ with $X$ (viewed as a subvariety of $X_\h$) is $\overline{\mathcal{L}}$.
\end{enumerate}
\end{Prop}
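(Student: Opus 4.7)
I would model the proof on the analogous developments in \cite[Section 3]{B_ineq}, using Lemma~\ref{Lem:adj_functor} as the key tool for bridging formal and global data.

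\emph{Existence.} Take $\mathcal{J}_\hbar$ to be the sum of all two-sided ideals $\mathcal{J} \subseteq \mathcal{D}_{\h,\hbar}$ with $\mathcal{J}^{\wedge_x} \subseteq \mathcal{I}_\hbar$. This collection is non-empty and closed under sums. To see that $\mathcal{J}_\hbar^{\wedge_x} \subseteq \mathcal{I}_\hbar$, note that $\mathcal{I}_\hbar$ is $\hbar$-adically closed in $\mathcal{D}_{\h,\hbar}^{\wedge_x}$ (its quotient is $\hbar$-complete and separated because it is finitely generated over $\C[[\h]] \widehat{\otimes} \Weyl_\hbar^{\wedge_0}(T_x\mathcal{L})$), and local sections of $\mathcal{J}_\hbar$ near $x$ are finite sums of elements from the constituent ideals, all of whose completions already lie in $\mathcal{I}_\hbar$.

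\emph{$\C^\times$-stability (1).} Let $\mathsf{eu}$ denote the Euler derivation on $\mathcal{D}_{\h,\hbar}$; it preserves $\mathcal{I}_\hbar$ upon completion. For any two-sided ideal $\mathcal{J}$, the sum $\mathcal{J} + \mathsf{eu}(\mathcal{J})$ is again a two-sided ideal by the Leibniz identity $a\,\mathsf{eu}(b)\,c = \mathsf{eu}(abc) - \mathsf{eu}(a)\,bc - ab\,\mathsf{eu}(c)$, and its completion at $x$ still lies in $\mathcal{I}_\hbar$. Applied to $\mathcal{J} = \mathcal{J}_\hbar$, maximality forces $\mathsf{eu}(\mathcal{J}_\hbar) \subseteq \mathcal{J}_\hbar$, so $\mathcal{J}_\hbar$ is $\C^\times$-stable.

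\emph{Support (2).} Set $Z := \operatorname{Supp}(\mathcal{D}_{\h,\hbar}/\mathcal{J}_\hbar) \cap X$. By standard associated-graded principles, $Z$ is a closed, $\C^\times$-stable Poisson subvariety of $X$, hence a union of closures of symplectic leaves. Since the completion $\mathcal{D}_{\h,\hbar}^{\wedge_x}/\mathcal{J}_\hbar^{\wedge_x}$ surjects onto the nonzero $\mathcal{D}_{\h,\hbar}^{\wedge_x}/\mathcal{I}_\hbar$, the point $x$ belongs to $Z$, giving $\overline{\mathcal{L}} \subseteq Z$. For the reverse inclusion, I would invoke the formal Kaledin-type decomposition of $X_\h^{\wedge_x}$ as the product of $\mathcal{L}^{\wedge_x}$, the formal transverse slice to $\mathcal{L}$, and $\h^{\wedge_0}$ (Lemma~\ref{Lem:form_deform_induced} and Remark~\ref{Rem:formal_quant}): finite generation of $\mathcal{D}_{\h,\hbar}^{\wedge_x}/\mathcal{I}_\hbar$ over the Weyl algebra of $T_x\mathcal{L}$ forces the classical support (after reducing mod $\hbar$) to sit inside $\mathcal{L}^{\wedge_x}$, i.e.\ $Z^{\wedge_x} \subseteq \mathcal{L}^{\wedge_x}$. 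Together with $Z$ being a $\C^\times$-stable union of leaf closures, this pins $Z$ down as $\overline{\mathcal{L}}$.

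The main obstacle will be Step~3 — specifically, promoting the formal inclusion $Z^{\wedge_x} \subseteq \mathcal{L}^{\wedge_x}$ to the scheme-theoretic one $Z \subseteq \overline{\mathcal{L}}$. The contracting $\C^\times$-action on $\overline{\mathcal{L}}$, combined with Lemma~\ref{Lem:adj_functor} (which matches $\C^\times$-equivariant Poisson data on $\mathcal{L}$ with formal-neighborhood data at $x$), is the mechanism that enables this passage, but one must carefully track the bimodule structure and the $\h$-parameter throughout.
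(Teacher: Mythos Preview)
Your existence and $\C^\times$-stability arguments are fine and in fact define the same ideal as the paper's construction. The genuine gap is in the support step, and it is not just the globalization issue you flag at the end --- the local step itself is wrong. From $\mathcal{J}_\hbar^{\wedge_x}\subseteq\mathcal{I}_\hbar$ you get a \emph{surjection} $\D_{\h,\hbar}^{\wedge_x}/\mathcal{J}_\hbar^{\wedge_x}\twoheadrightarrow \D_{\h,\hbar}^{\wedge_x}/\mathcal{I}_\hbar$, hence only $\operatorname{Supp}(\D_{\h,\hbar}^{\wedge_x}/\mathcal{I}_\hbar)\subseteq Z^{\wedge_x}$, the reverse of what you wrote. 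Knowing that $\D_{\h,\hbar}^{\wedge_x}/\mathcal{I}_\hbar$ is supported on $\mathcal{L}^{\wedge_x}$ tells you nothing, a priori, about how large $Z^{\wedge_x}$ is; and even if $Z^{\wedge_x}\subseteq\mathcal{L}^{\wedge_x}$ held, it would not rule out components of $Z$ not passing through $x$.

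The paper's route avoids both problems at once by working categorically. One sets up categories $\mathfrak{B}$ (coherent $\C^\times$-equivariant Poisson $\D_{\h,\hbar}$-bimodules) and $\mathfrak{B}^{\wedge}$ (their formal analogues at $x$, with Euler derivation), with full subcategories $\mathfrak{B}_{\mathcal{L}},\mathfrak{B}^{\wedge}_{\mathcal{L}}$ of objects supported on $\overline{\mathcal{L}}$. The point of Lemma~\ref{Lem:adj_functor} is precisely that the completion functor $\mathfrak{B}_{\mathcal{L}}\to\mathfrak{B}^{\wedge}_{\mathcal{L}}$ admits a right adjoint $\bullet_{fin}$: one pushes $\B'\in\mathfrak{B}^{\wedge}_{\mathcal{L}}$ forward to $X$ and takes the sum of all sub-bimodules lying in $\mathfrak{B}$; Lemma~\ref{Lem:adj_functor} guarantees this sum is coherent, and by construction it lands in $\mathfrak{B}_{\mathcal{L}}$. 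Then $\mathcal{J}_\hbar$ is defined as the kernel of the natural map $\D_{\h,\hbar}\to(\D_{\h,\hbar}^{\wedge_x}/\mathcal{I}_\hbar)_{fin}$. By adjunction this kernel is exactly your maximal ideal, so the two constructions agree; but now $\D_{\h,\hbar}/\mathcal{J}_\hbar$ \emph{embeds} into an object of $\mathfrak{B}_{\mathcal{L}}$, which immediately gives $Z\subseteq\overline{\mathcal{L}}$, and $\C^\times$-stability is automatic from the equivariant setup. In short, Lemma~\ref{Lem:adj_functor} is not a tool for passing from $Z^{\wedge_x}$ to $Z$ after the fact; it is what produces a coherent global target supported on $\overline{\mathcal{L}}$ into which the quotient injects.
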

Note that \cite[Proposition 3.8]{B_ineq} is an affine analog of this proposition.
The proof of Proposition \ref{Prop:max_ideal} is similar to that and we will provide
it for reader's convenience.
\begin{proof}
Consider the category $\mathfrak{PB}$ of coherent Poisson $\D_{\h,\hbar}$-bimodules $\B$
that carry a $\C^\times$-action compatible
with that on $\D_{\h,\hbar}$ and satisfy the following two conditions:
\begin{enumerate}
\item the left and right actions of $\C[\h][[\hbar]]$ on $\mathcal{B}$
coincide.
\item $\B/(\h,\hbar)\B$ is a coherent sheaf on $X$.
\end{enumerate}
Inside we can consider the full subcategory $\mathfrak{PB}_{\overline{\mathcal{L}}}$
of all objects supported on $\overline{\mathcal{L}}$.

Also consider the category of $\mathfrak{PB}^{\wedge}$ of finitely
generated Poisson $\D_{\h,\hbar}^{\wedge_x}$-bimodules $\B'$ that come with an Euler derivation
and   satisfy the following three conditions:
\begin{enumerate}
\item the left and right actions of $\C[[\h,\hbar]]$ on $\B'$ coincide,
\item and $\B'/(\h,\hbar)\B'$ is a finitely generated $\C[X^{\wedge_x}]$-module.
\end{enumerate}
Similarly to the above we can define the full subcategory $\mathfrak{PB}^{\wedge}_{\mathcal{L}}
\subset \mathfrak{PB}^\wedge$.

We have the completion functor $\mathfrak{PB}\rightarrow \mathfrak{PB}^{\wedge}$.
It has the right adjoint on $\mathfrak{PB}^{\wedge}_{\mathcal{L}}$ that maps
to $\mathfrak{PB}_{\overline{\mathcal{L}}}$: we view $\B'\in \mathfrak{PB}^{\wedge}$ as a sheaf
on $X$ via push-forward from $X^{\wedge_x}$ and take the sum of all subbimodules
in $\B'$ lying in $\mathfrak{PB}$. That the sum is coherent follows
from Lemma \ref{Lem:adj_functor}. Let us write $\bullet_{fin}$ for this right
adjoint functor.

The ideal $\mathcal{J}_\hbar$ is the kernel of the natural map $\D_{\h,\hbar}
\rightarrow (\D_{\h,\hbar}^{\wedge_x}/\mathcal{I}_\hbar)_{fin}$.
\end{proof}

Another consequence of the proof of Proposition \ref{Prop:max_ideal} is the following claim.

\begin{Lem}\label{Lem:leaf_intersection}
Let $\Leaf$ be a symplectic leaf in $X$ with finite algebraic fundamental group. Let $\Leaf_1$ be a leaf in some
fiber of $X_\h\rightarrow \h$ such that $\overline{\Leaf}$ is an irreducible component in $\overline{\C^\times \mathcal{L}_1}\cap X$. Then $\overline{\Leaf}=X\cap \overline{\C^\times \mathcal{L}_1}$.
\end{Lem}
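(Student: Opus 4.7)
The plan is to apply the construction of Proposition~\ref{Prop:max_ideal}, feeding it an ideal adapted to $Z := \overline{\C^\times\Leaf_1}$. Pick $x\in\Leaf$; by hypothesis $x\in Z$. Combining Lemma~\ref{Lem:form_deform_induced} with the formal Weinstein splitting at $x\in\Leaf$, there is an isomorphism of formal Poisson schemes
$$X_{\h}^{\wedge_x}\cong T_x\Leaf^{\wedge_0}\times\Sigma^{\wedge_0}\times\h^{\wedge_0},$$
where $\Sigma$ is the transverse slice at $x$ (itself a conical symplectic singularity). Since $T_x\Leaf^{\wedge_0}$ is symplectic and hence Poisson-simple, every closed $\C^\times$-invariant Poisson formal subscheme of $X_{\h}^{\wedge_x}$ has the form $T_x\Leaf^{\wedge_0}\times W$; applied to $Z^{\wedge_x}$ this produces a closed Poisson $W\subset\Sigma^{\wedge_0}\times\h^{\wedge_0}$. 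The assumption that $\overline{\Leaf}$ is an irreducible component of $X\cap Z$ passing through $x\in\Leaf$ forces the scheme-theoretic fibre of $W\to\h^{\wedge_0}$ over $0$ to be supported at $\{0\}\subset\Sigma^{\wedge_0}$, hence of finite length over $\C$. By complete Nakayama, $\C[W]$ is finitely generated over $\C[[\h]]$, so $W$ is finite over $\h^{\wedge_0}$.

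Next, lift the ideal of $Z^{\wedge_x}$ in $\OCat_{X_{\h}^{\wedge_x}}$ to a two-sided ideal $\mathcal{I}_\hbar\subset\D_{\h,\hbar}^{\wedge_x}$. The finiteness of $W$ over $\h^{\wedge_0}$ is precisely the finite-generation condition of $\D_{\h,\hbar}^{\wedge_x}/\mathcal{I}_\hbar$ over $\C[[\h]]\widehat{\otimes}\Weyl_\hbar^{\wedge_0}(T_x\Leaf)$. Since $\pi_1(\Leaf)$ is finite by hypothesis, Proposition~\ref{Prop:max_ideal} applies (via Lemma~\ref{Lem:adj_functor}) and yields the largest ideal $\J_\hbar\subset\D_{\h,\hbar}$ with $\J_\hbar^{\wedge_x}\subset\mathcal{I}_\hbar$, and by part (2) of that proposition the classical support $S$ of $\D_{\h,\hbar}/\J_\hbar$ in $X_\h$ satisfies $S\cap X=\overline{\Leaf}$.

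It remains to show $S\supset Z$ globally. From $\J_\hbar^{\wedge_x}\subset\mathcal{I}_\hbar$ we have $S^{\wedge_x}\supset Z^{\wedge_x}$, and by faithful flatness of completion for the Noetherian scheme $X_\h$ this propagates to an inclusion $S\cap U\supset Z\cap U$ on some Zariski-open neighborhood $U\ni x$. Since $Z=\overline{\C^\times\Leaf_1}$ is irreducible, $Z\cap U$ is a nonempty open subset of $Z$, and $\C^\times$-invariance of both $Z$ and $S$ together with closedness of $S$ then force $S\supset Z$. Intersecting with $X$ gives $Z\cap X\subset S\cap X=\overline{\Leaf}$, which combined with the hypothesis $\overline{\Leaf}\subset Z\cap X$ yields $Z\cap X=\overline{\Leaf}$. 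The main obstacle is the finiteness step: once $W$ is known to be finite over $\h^{\wedge_0}$, the rest is a formal combination of Proposition~\ref{Prop:max_ideal} with the $\C^\times$-equivariance and irreducibility of $Z$.
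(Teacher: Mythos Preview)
Your proof is correct and matches the approach the paper has in mind: the paper states this lemma as ``another consequence of the proof of Proposition~\ref{Prop:max_ideal}'' without giving details, and you have spelled out precisely how the right adjoint $\bullet_{fin}$ constructed there globalizes the local information at $x$.

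Two remarks. First, the passage through the quantization in your step~4 is harmless but superfluous: the lift $\mathcal{I}_\hbar$ you take necessarily contains $\hbar$, so $\D_{\h,\hbar}^{\wedge_x}/\mathcal{I}_\hbar=\OCat_{Z^{\wedge_x}}$ and everything takes place in the $\hbar=0$ specialization. One can argue directly with the categories $\mathfrak{B},\mathfrak{B}^\wedge$ from the proof of Proposition~\ref{Prop:max_ideal}: the unit of adjunction gives a map $\OCat_Z\to(\OCat_{Z^{\wedge_x}})_{fin}$, and since $\OCat_Z$ is a torsion-free module over the integral domain $\OCat_Z$ with $(\OCat_Z)^{\wedge_x}\neq 0$, this unit is injective; hence $\OCat_Z$ is a subobject of something in $\mathfrak{B}_{\Leaf}$, forcing $Z\cap X\subset\overline{\Leaf}$. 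This replaces your step~6 (faithful flatness plus irreducibility) by a one-line functorial argument. Second, the assertion that the fibre of $W$ over $0$ is supported at $\{0\}$ deserves one sentence of justification: since $Z\cap X$ is Poisson-closed, any other irreducible component passing through $x\in\Leaf$ would be the closure of a leaf containing $\Leaf$ in its closure, hence would contain $\overline{\Leaf}$, contradicting that $\overline{\Leaf}$ is itself a component.
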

\begin{proof}
Let $\mathcal{I}_\hbar$ denote the kernel of $\D_{\h,\hbar}^{\wedge_x}\twoheadrightarrow \C[\overline{\C^\times \mathcal{L}_1}]^{\wedge_x}$. The ideal $\J_\hbar$ from Proposition \ref{Prop:max_ideal}
is the kernel of $\D_{\h,\hbar}\twoheadrightarrow \mathcal{O}_{\overline{\C^\times \mathcal{L}}}$.
Our claim follows from Proposition \ref{Prop:max_ideal}.
\end{proof}

\begin{Prop}\label{Lem:leave_fund_group}
The algebraic fundamental group of every leaf $\mathcal{L}\subset X$ is finite.
\end{Prop}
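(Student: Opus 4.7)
The plan is to combine Kaledin's formal slice decomposition with the explicit description of symplectic leaves in $Y = V/\Gamma$ from Section~2.2.

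First I would analyze the leaves of $Y$ directly. Each leaf has the form $L_{\Gamma'} = (V^{\Gamma'})^{\mathrm{gen}}/N_\Gamma(\Gamma')$, where $\Gamma'$ runs over $\Gamma$-conjugacy classes of point stabilizers and $(V^{\Gamma'})^{\mathrm{gen}}$ denotes the open subset of $V^{\Gamma'}$ on which the stabilizer equals $\Gamma'$ exactly. Its complement inside $V^{\Gamma'}$ is the union of the subspaces $V^{\Gamma''}$ for $\Gamma'' \supsetneq \Gamma'$; each is a \emph{symplectic} subspace of $V^{\Gamma'}$ (since $\Gamma''$ acts by symplectic transformations), hence has even---and so at least $2$---codimension. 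Removing a closed subset of codimension $\geq 2$ from a smooth simply connected variety preserves $\pi_1$, so $\pi_1^{\mathrm{top}}((V^{\Gamma'})^{\mathrm{gen}}) = 1$, giving $\pi_1^{\mathrm{alg}}(L_{\Gamma'}) = N_\Gamma(\Gamma')/\Gamma'$, a finite group.

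Next I would transfer finiteness to leaves in $X$ via the semismall map $\rho$. For $\mathcal{L}\subset X$ a leaf, fix $x\in\mathcal{L}$, set $y=\rho(x)$, and let $L$ be the leaf of $Y$ through $y$. Kaledin's formal decomposition yields $X^{\wedge_x}\cong\mathcal{L}^{\wedge_x}\times\underline{X}^{\wedge_0}$ and $Y^{\wedge_y}\cong L^{\wedge_y}\times\underline{Y}^{\wedge_0}$, where $\underline{X}\to\underline{Y}$ is a $\Q$-factorial terminalization of a lower-dimensional conical symplectic singularity (of the form $W/\Gamma'$ in our setting). Since $\rho$ respects these product structures, becoming $\mathrm{id}\times\underline{\rho}$ after identifying the leaf factors, $\rho|_\mathcal{L}\colon\mathcal{L}\to L$ is étale at every point. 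Combining étaleness with semismallness (Lemma~\ref{Lem:ssmall}) and properness of $\rho$, one concludes that $\mathcal{L}\to\rho(\mathcal{L})$ is a finite étale cover onto an open subset of $L$.

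For the open leaf $\mathcal{L}=X^{\mathrm{reg}}$ I would argue separately and more directly: Namikawa's bound for terminal symplectic varieties gives $\operatorname{codim}_X X^{\mathrm{sing}}\geq 4$, hence $X^{\mathrm{reg}}$ differs from $\rho^{-1}(Y^{\mathrm{reg}})\cong V^{\mathrm{reg}}/\Gamma$ by a codimension $\geq 2$ subset, so $\pi_1^{\mathrm{alg}}(X^{\mathrm{reg}})\cong\Gamma$ is finite.

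The hardest part will be making the global structure of $\rho|_\mathcal{L}$ precise: one must verify that $\mathcal{L}\to\rho(\mathcal{L})$ is genuinely finite étale rather than merely étale, and that $L\setminus\rho(\mathcal{L})$ has codimension $\geq 2$ in $L$, so that $\pi_1^{\mathrm{alg}}(\rho(\mathcal{L}))\cong\pi_1^{\mathrm{alg}}(L)$. This requires a careful study of how leaves of $X$ stratify the preimages of leaves of $Y$ under the semismall map $\rho$, together with the structure of exceptional fibers of $\underline{\rho}\colon\underline{X}\to\underline{Y}$. Once these are in hand, $\pi_1^{\mathrm{alg}}(\mathcal{L})$ is a finite-index subgroup of the finite group $\pi_1^{\mathrm{alg}}(L)$, which concludes the argument.
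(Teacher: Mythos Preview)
Your first step---computing $\pi_1^{\mathrm{alg}}$ of leaves of $V/\Gamma$ by hand---is correct and pleasantly explicit; the paper instead cites Namikawa and Proudfoot--Schedler because its argument is written for general conical symplectic singularities $Y$, not only $V/\Gamma$.

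The real gap is in your second step. You assert that the two Kaledin decompositions, $X^{\wedge_x}\cong \mathcal{L}^{\wedge_x}\times \underline{X}^{\wedge_0}$ and $Y^{\wedge_y}\cong L^{\wedge_y}\times \underline{Y}^{\wedge_0}$, are compatible so that $\rho$ becomes $\mathrm{id}\times\underline{\rho}$, and hence $\rho|_{\mathcal{L}}$ is \'etale. But the decomposition of $X^{\wedge_x}$ that is actually compatible with $\rho$ is the one coming from the leaf $L$ of $Y$, namely $X^{\wedge_x}\cong L^{\wedge_y}\times \underline{X}^{\wedge}$ with $\underline{X}$ a terminalization of the slice $\underline{Y}$ to $L$ (cf.\ Section~\ref{SS_HC_quant}). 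In that picture the leaf $\mathcal{L}\subset X$ through $x$ is $L$ times the leaf of $\underline{X}$ through the corresponding point, so $\dim\mathcal{L}=\dim L$ holds only when that slice-leaf is zero-dimensional. Nothing you have said forces this: for instance if $\underline{X}$ happens to be smooth at the relevant point, the slice-leaf is open and $\dim\mathcal{L}>\dim L$, so $\rho|_{\mathcal{L}}$ is not \'etale there. Thus the very claim ``\'etale at every point'' is unproven, before you even get to the global finiteness issues you flag at the end.

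The paper circumvents exactly this difficulty by a deformation argument. Using Lemma~\ref{Lem:form_deform_induced} it passes to a generic fibre $X_\lambda\cong Y_\lambda$ of $X_{\h}\to\h$, picks a leaf $\mathcal{L}_1\subset X_\lambda$ degenerating to $\mathcal{L}$, and looks at an open leaf $\mathcal{L}'$ in $\overline{\C^\times\mathcal{L}_1}\cap X$. A dimension count forces $\dim\mathcal{L}'=\dim\rho(\mathcal{L}')$ for this particular $\mathcal{L}'$, so the \'etale/cover argument (your intended mechanism) applies to $\mathcal{L}'$ and gives $|\pi_1^{\mathrm{alg}}(\mathcal{L}')|<\infty$. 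Only then does the paper invoke Lemma~\ref{Lem:leaf_intersection}---which itself requires finiteness of $\pi_1^{\mathrm{alg}}(\mathcal{L}')$---to conclude that the intersection is irreducible and hence $\mathcal{L}'=\mathcal{L}$. So the equality $\dim\mathcal{L}=\dim\rho(\mathcal{L})$ you need is an \emph{output} of the paper's argument, not an input; your proposal assumes it without justification.
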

\begin{proof}
Consider a point $x\in \mathcal{L}$ and its formal neighborhood $X_{\h}^{\wedge_x}$. By Lemma \ref{Lem:form_deform_induced}, $X_\h^{\wedge_x}\cong X^{\wedge_x}\times \h^{\wedge_0}$ (an isomorphism of formal Poisson schemes).
It follows that for a generic $\lambda\in \h$ there is a symplectic leaf $\mathcal{L}_1\subset X_\lambda$ of the same
dimension as $\mathcal{L}$ such that $\mathcal{L}\subset \overline{\C^\times \mathcal{L}_1}$
(the closure is taken in $X_{\h}$). The intersection of $\overline{\C^\times \mathcal{L}_1}$ (the closure is
taken in $Y_{\h}$) with $Y$ has therefore the same dimension as $\mathcal{L}$. It follows that for some
open leaf $\mathcal{L}'$ in the intersection of $\overline{\C^\times \mathcal{L}_1}$ with $X$,
we have $\dim \mathcal{L}'=\dim \rho(\mathcal{L}')$.


Since $\rho$ is a Poisson morphism, $\dim \mathcal{L}'=\dim \rho(\mathcal{L}')$   implies that there
is an open subset $\mathcal{L}^0\subset \mathcal{L}'$ that is an unramified cover of an open leaf $\underline{\mathcal{L}}$ in $\rho(\mathcal{L}')$. By the work of Namikawa, \cite{Namikawa_finite} (the case of open leaf) and
of Proudfoot and Schedler, the proof of \cite[Proposition 3.1]{PS} (the general case), the algebraic fundamental group of $\underline{\mathcal{L}}$ is finite. It follows that the algebraic fundamental group of $\mathcal{L}^0$ is finite. Since $\mathcal{L}'$ is smooth, we deduce that $\pi_1(\mathcal{L}^0)\twoheadrightarrow
\pi_1(\mathcal{L}')$ hence the algebraic fundamental group of $\mathcal{L}'$ is finite.

By Lemma  \ref{Lem:leaf_intersection}, the intersection of $\overline{\C^\times \mathcal{L}_1}$ with $X$ is irreducible,
so $\Leaf'=\Leaf$. This finishes the proof.
\end{proof}

\subsection{Proof of Theorem \ref{Thm:simplicity}}
We start with the following proposition.

\begin{Prop}\label{Prop:simplicity}
Suppose that the sheaf of algebras $\mathcal{D}_\lambda$ is simple for a Weil generic $\lambda\in \h$.
Then $\mathcal{D}_\lambda$ is simple for any $\lambda\in \h$.
\end{Prop}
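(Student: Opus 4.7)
The plan is to argue by contradiction. Suppose that for some $\lambda_0\in\h$ the sheaf $\D_{\lambda_0}$ admits a proper two-sided ideal $\J$, and set $\mathcal{Q}_0:=\D_{\lambda_0}/\J$. Since $\gr\J$ is a Poisson ideal in $\OCat_X$, the support of $\mathcal{Q}_0$ in $X$ is a $\C^\times$-stable closed Poisson subvariety, hence a union of closures of symplectic leaves. Choose a leaf $\Leaf$ whose closure is an irreducible component of this support, and a point $x\in\Leaf$; by Proposition \ref{Lem:leave_fund_group}, $\Leaf$ has finite algebraic fundamental group.

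I then pass to the Rees sheaf $\D_{\lambda_0\hbar^d}$ and complete at $x$ to obtain an ideal $\J_\hbar^{\wedge_x}\subset \D_{\lambda_0\hbar^d}^{\wedge_x}$. Using the analog for quantization sheaves of the slice decomposition recorded in Section \ref{SS_HC_quant}, $\D_{\lambda_0\hbar^d}^{\wedge_x}\cong \underline{\D}_{\lambda_0\hbar^d}^{\wedge_0}\widehat{\otimes}\Weyl_\hbar(T_x\Leaf)^{\wedge_0}$, and simplicity of the Weyl algebra identifies $\J_\hbar^{\wedge_x}$ with $\underline{\J}_\hbar\widehat{\otimes}\Weyl_\hbar(T_x\Leaf)^{\wedge_0}$. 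The choice of $x$ on the open leaf of $\mathrm{Supp}(\mathcal{Q}_0)$ forces $\underline{\D}_{\lambda_0\hbar^d}^{\wedge_0}/\underline{\J}_\hbar$ to be finitely generated over $\C[[\hbar]]$. Using the splitting $\D_{\h,\hbar}^{\wedge_x}\cong \C[[\h]]\widehat{\otimes}\D_{\lambda_0\hbar^d}^{\wedge_x}$ of Remark \ref{Rem:formal_quant}, I set $\mathcal{I}_\hbar:=\C[[\h]]\widehat{\otimes}\J_\hbar^{\wedge_x}$. This satisfies the hypothesis of Proposition \ref{Prop:max_ideal}, which produces a $\C^\times$-stable ideal $\J_\hbar\subset\D_{\h,\hbar}$ whose completion at $x$ is contained in $\mathcal{I}_\hbar$ and with $\mathrm{Supp}(\D_{\h,\hbar}/\J_\hbar)\cap X=\overline{\Leaf}$.

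De-Reesing by taking $\C^\times$-finite vectors and setting $\hbar=1$ produces a nonzero two-sided ideal $\J_\h\subset\D_\h$; write $\mathcal{Q}:=\D_\h/\J_\h$. By construction, the formal germ at $x$ of $\mathrm{Supp}(\mathcal{Q})\subset X_\h$ contains $\overline{\Leaf}^{\wedge_x}\times\h^{\wedge_0}$ inside the product decomposition $X_\h^{\wedge_x}\cong X^{\wedge_x}\times\h^{\wedge_0}$ from Lemma \ref{Lem:form_deform_induced}. Consequently the image of $\mathrm{Supp}(\mathcal{Q})$ in $\h$ is Zariski dense, and $\mathcal{Q}|_{\lambda'}\neq 0$ on a Zariski open $U\subset\h$. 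Shrinking $U$ to its intersection with the Weil generic locus on which $\D_{\lambda'}$ is simple, any such $\lambda'$ gives a nonzero bimodule quotient of the simple sheaf $\D_{\lambda'}$, forcing $\mathcal{Q}|_{\lambda'}=\D_{\lambda'}$ and hence $\J_\h|_{\lambda'}=0$.

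The contradiction comes from the observation that $\J_\h$ is a nonzero coherent sub-bimodule of $\D_\h$: its associated graded $\gr\J_\h$ is a nonzero coherent sub-sheaf of the $\C[\h]$-flat sheaf $\OCat_{X_\h}$, hence $\C[\h]$-torsion-free, so its generic fiber over $\h$ is nonzero and therefore $\J_\h|_{\lambda'}\neq 0$ on a Zariski open subset of $\h$, contradicting the previous conclusion. The main technical point I expect to require care is establishing the slice decomposition of $\D_{\lambda_0\hbar^d}^{\wedge_x}$ at the level of sheaves (Section \ref{SS_HC_quant} records it only for $R_\hbar(\A_\lambda)^{\wedge_x}$) and extracting from it the finite-generation of $\underline{\D}_{\lambda_0\hbar^d}^{\wedge_0}/\underline{\J}_\hbar$ needed to invoke Proposition \ref{Prop:max_ideal}.
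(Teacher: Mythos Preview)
Your strategy coincides with the paper's through the main construction: pass to the Rees sheaf, complete at $x\in\Leaf$, use Remark~\ref{Rem:formal_quant} to set $\mathcal{I}_\hbar=\C[[\h]]\widehat{\otimes}\J_{\lambda_0,\hbar}^{\wedge_x}$, and apply Proposition~\ref{Prop:max_ideal}. (The detour through a slice factor $\underline{\D}$ and a transverse ideal $\underline{\J}_\hbar$ is unnecessary: the finite-generation hypothesis in Proposition~\ref{Prop:max_ideal} is immediate from $x$ lying on an open leaf of $\mathrm{Supp}(\D_{\lambda_0}/\J)$.)

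The gap is in how you derive the contradiction. From simplicity of $\D_{\lambda'}$ and $\mathcal{Q}|_{\lambda'}\neq 0$ you correctly conclude that the \emph{image} of $\J_\h$ in $\D_{\lambda'}$ vanishes. You then assert this contradicts ``$\J_\h|_{\lambda'}\neq 0$'', deduced from $\gr\J_\h$ being a torsion-free subsheaf of $\OCat_{X_\h}$. But the latter only says the \emph{specialization} $\J_\h\otimes_{\C[\h]}\C_{\lambda'}$ is nonzero; its map to $\D_{\lambda'}$ has kernel $\mathrm{Tor}_1^{\C[\h]}(\mathcal{Q},\C_{\lambda'})$, and nothing you wrote forces this Tor to vanish. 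So the two statements are not in contradiction as written.

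The paper argues with the quotient instead of the ideal. Because $\J_\hbar$ is the \emph{maximal} ideal with completion inside $\mathcal{I}_\hbar$, the $\C^\times$-finite quotient $R_\hbar(\D_\h)/\J_{\hbar,fin}$ genuinely \emph{embeds} into $\D_{\h,\hbar}^{\wedge_x}/\mathcal{I}_\hbar$; the latter is $\C[[\h]]$-flat by construction, so the quotient itself is $\C[\h]$-torsion-free, and its specialization at a Weil generic $\lambda'$ is a nonzero proper quotient of $\D_{\lambda'}$. This single step replaces both your formal-germ support argument and your attempted ideal-side contradiction. (Your route is in fact salvageable: if the image of $\J_\h$ in $\D_{\lambda'}$ vanished for Weil generic $\lambda'$, then passing to symbols every nonzero $\sigma\in\gr\J_\h\subset\OCat_{X_\h}$ would lie in $\mathfrak{m}_{\lambda'}\OCat_{X_\h}$ for all such $\lambda'$, forcing $\sigma=0$ since $\OCat_{X_\h}$ is $\C[\h]$-flat --- but this is not the argument you gave.)
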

\begin{proof}
Let $\mathcal{J}\subset \mathcal{D}_\lambda$ be a proper ideal. Let $\mathcal{L}\subset X$ be an open symplectic
leaf in the support of $\mathcal{D}_\lambda/\mathcal{J}_\lambda$. Pick a point $x\in \mathcal{L}$.
Let $\mathcal{J}_{\lambda,\hbar}$
be the two-sided ideal in $\mathcal{D}_{\lambda,\hbar}$ corresponding to $\mathcal{J}_\lambda$. Then
$\mathcal{D}_{\lambda,\hbar}^{\wedge_x}/ \mathcal{J}_{\lambda,\hbar}^{\wedge_x}$ is finitely generated over
$\Weyl_\hbar^{\wedge_0}(T_x\mathcal{L})$. On the other hand, by Lemma \ref{Rem:formal_quant}, $\mathcal{D}_{\h,\hbar}^{\wedge_x}=\C[[\h]]\widehat{\otimes} \mathcal{D}_{\lambda,\hbar}^{\wedge_0}$.
Set $\mathcal{I}_\hbar:=\C[[\h]]\widehat{\otimes} \mathcal{J}_{\lambda,\hbar}^{\wedge_x}$. By Proposition \ref{Prop:max_ideal}, we can find a $\C^\times$-stable ideal $\mathcal{J}_\hbar\subset \mathcal{D}_{\h,\hbar}$
such that $\mathcal{J}_\hbar^{\wedge_x}=\mathcal{I}_\hbar$.
Let $\mathcal{J}_{\hbar,fin}$ denote the $\C^\times$-finite part of $\mathcal{J}_{\hbar}$. Since
$\mathcal{D}_{\h,\hbar}^{\wedge_x}/\mathcal{I}_{\hbar}$ is flat over $\C[[\h]]$, we see that
$R_\hbar(\mathcal{D}_\h)/\mathcal{J}_{\hbar,fin}$ (that embeds into $\mathcal{D}_{\h,\hbar}^{\wedge_x}/\mathcal{I}_{\hbar}$) is torsion free over $\C[\h]$.  So for a Weil
generic $\lambda'\in \h$ the specialization of $\mathcal{D}_{\h,\hbar,fin}/\mathcal{J}_{\hbar,fin}$
to $\lambda'\hbar^d$ is nonzero. It follows that $\mathcal{D}_{\lambda'}$ is not simple, a contradiction
with Corollary \ref{Cor:Weil_gen_simpl}.
\end{proof}

\begin{proof}[Proof of Theorem \ref{Thm:simplicity}]
We know by \cite[Theorem 4.2.1]{sraco} that the algebra $H_c$ is simple for a Weil generic $c\in \param$.
It follows that $eH_c e$ is simple for such $c$. But $eH_ce=\A_\lambda$, where $\lambda$ is obtained from
$c$ by applying an affine isomorphism $\mathfrak{p}\xrightarrow{\sim}\mathfrak{h}$. So $\A_\lambda$
is simple.
By Corollary \ref{Cor:Weil_gen_simpl}, $\mathcal{D}_\lambda$
is simple for a Weil generic $\lambda$. Now we are done by Proposition \ref{Prop:simplicity}.
\end{proof}

Also let us record the following result.

\begin{Cor}\label{Cor:Conj_equiv_stat}
Suppose $\mathcal{D}_\lambda$ is simple. Then the support of every coherent $\mathcal{D}_\lambda$-module
intersects $X^{reg}$.
\end{Cor}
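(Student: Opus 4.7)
The plan is to argue by contradiction: supposing $M$ is a nonzero coherent $\mathcal{D}_\lambda$-module with $\operatorname{Supp}(M)\subset X\setminus X^{reg}=X^{sing}$, I will extract from $M$ a proper nonzero two-sided ideal of $\mathcal{D}_\lambda$, contradicting simplicity. The strategy mirrors the proof of Proposition \ref{Prop:simplicity}, with the annihilator of the Rees completion $M_\hbar^{\wedge_x}$ playing the role that $\mathcal{J}_{\lambda,\hbar}^{\wedge_x}$ plays there.

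First I would pick an open symplectic leaf $\mathcal{L}$ in the Poisson subvariety $\operatorname{Supp}(M)$; since $\mathcal{L}\subset X^{sing}$, Proposition \ref{Lem:leave_fund_group} ensures that the algebraic fundamental group of $\mathcal{L}$ is finite, so the hypothesis of Proposition \ref{Prop:max_ideal} is available. Fix a point $x\in\mathcal{L}$ and a good filtration on $M$, giving a Rees module $M_\hbar$ over $\mathcal{D}_{\lambda,\hbar}$, and complete at $x$. Applying the slice decomposition of Section \ref{SS_HC_quant} to the sheaf $\mathcal{D}_{\lambda,\hbar}^{\wedge_x}\cong\underline{\mathcal{D}}_{\underline{\lambda},\hbar}^{\wedge_0}\widehat{\otimes}_{\C[[\hbar]]}\mathbb{A}_\hbar(T_x\mathcal{L})^{\wedge_0}$, one sees that $M_\hbar^{\wedge_x}$ factors as a module over $\underline{\mathcal{D}}_{\underline{\lambda},\hbar}^{\wedge_0}$ supported at the origin of the formal slice (because $\operatorname{Supp}(M)\cap\mathcal{L}=\mathcal{L}$ locally near $x$) tensored with a module over $\mathbb{A}_\hbar(T_x\mathcal{L})^{\wedge_0}$. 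Consequently, the quotient of $\mathcal{D}_{\lambda,\hbar}^{\wedge_x}$ by the annihilator $\operatorname{Ann}(M_\hbar^{\wedge_x})$ is finitely generated over $\mathbb{A}_\hbar(T_x\mathcal{L})^{\wedge_0}$.

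Using the isomorphism $\mathcal{D}_{\h,\hbar}^{\wedge_x}\cong\C[[\h]]\widehat{\otimes}\mathcal{D}_{\lambda,\hbar}^{\wedge_x}$ of Remark \ref{Rem:formal_quant}, set $\mathcal{I}_\hbar:=\C[[\h]]\widehat{\otimes}\operatorname{Ann}(M_\hbar^{\wedge_x})$; this satisfies the hypothesis of Proposition \ref{Prop:max_ideal}, which then supplies a $\C^\times$-stable two-sided ideal sheaf $\mathcal{J}_\hbar\subset\mathcal{D}_{\h,\hbar}$ whose completion at $x$ lies in $\mathcal{I}_\hbar$ and whose quotient is supported on $\overline{\mathcal{L}}$ within $X$. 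Taking the $\C^\times$-finite part $\mathcal{J}_{\hbar,fin}$, specializing $\h\mapsto\lambda$, and dequantizing at $\hbar=1$ produces an ideal $\mathcal{J}\subset\mathcal{D}_\lambda$ with $\operatorname{Supp}(\mathcal{D}_\lambda/\mathcal{J})\subset\overline{\mathcal{L}}$. Since $\overline{\mathcal{L}}\subsetneq X$, the ideal $\mathcal{J}$ agrees with $\mathcal{D}_\lambda$ on the nonempty open complement, hence is nonzero; and since its completion at $x$ lies in the annihilator of the nonzero module $M^{\wedge_x}$, the unit is not locally in $\mathcal{J}$ near $x$, so $\mathcal{J}\neq\mathcal{D}_\lambda$. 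This contradicts the simplicity of $\mathcal{D}_\lambda$.

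The main technical point is ensuring that the specialization at the given $\lambda$ does not collapse; this should be handled by a torsion-free-over-$\C[\h]$ argument for $R_\hbar(\mathcal{D}_\h)/\mathcal{J}_{\hbar,fin}$ exactly as at the end of the proof of Proposition \ref{Prop:simplicity}, or else one can avoid $\mathcal{D}_{\h,\hbar}$ entirely by using the direct $\lambda$-analog of Proposition \ref{Prop:max_ideal} applied to $\mathcal{D}_{\lambda,\hbar}$, whose proof is formally identical.
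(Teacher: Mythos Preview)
Your argument is correct in outline and shares the paper's core idea: produce from $M$ a nonzero proper two-sided ideal of $\mathcal{D}_\lambda$, contradicting simplicity. The paper's execution, however, is considerably more direct. After replacing $M$ by an irreducible quotient (using that $\mathcal{D}_\lambda$ is left Noetherian), the paper simply takes the annihilator $\operatorname{Ann}_{\mathcal{D}_\lambda}(M)$, which is already a two-sided ideal sheaf; Proposition~\ref{Prop:max_ideal} (in its $\mathcal{D}_{\lambda,\hbar}$-version, as in the proof of \cite[Theorem 1.1]{B_ineq}) is invoked only to identify $\operatorname{Supp}(\mathcal{D}_\lambda/\operatorname{Ann}(M))$ with the closure of a single maximal leaf $\mathcal{L}\subset X^{sing}$, showing the ideal is proper. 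There is no passage through the universal family $\mathcal{D}_{\h,\hbar}$, so the specialization-at-$\lambda$ issue you flag at the end never arises---and indeed you yourself anticipate this shortcut in your last sentence. Your longer route buys nothing here; it is the machinery needed for Proposition~\ref{Prop:simplicity}, where one must propagate an ideal from one $\lambda$ to a Weil generic $\lambda'$, but in the present corollary the parameter is fixed. One small imprecision: the claim that $M_\hbar^{\wedge_x}$ ``factors'' as a tensor product of modules over the two tensor factors is not literally true---modules over a completed tensor product need not decompose---though the conclusion you draw (finite generation of the quotient by the annihilator over $\mathbb{A}_\hbar(T_x\mathcal{L})^{\wedge_0}$) does follow from $\operatorname{gr}M^{\wedge_x}$ being supported on $\mathcal{L}^{\wedge_x}$.
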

\begin{proof}
Let $M$ be a coherent $\mathcal{D}_\lambda$-module whose support does not intersect $X^{reg}$.
The sheaf of algebras $\mathcal{D}_\lambda$ is left Noetherian. It follows that $M$ has an irreducible quotient,
so we can assume $M$ itself is irreducible. As in the proof of  \cite[Theorem 1.1]{B_ineq},
Proposition \ref{Prop:max_ideal} implies that the support of $\mathcal{D}_\lambda/\operatorname{Ann}_{\mathcal{D}_\lambda}(M)$ is the closure of the single
leaf, $\mathcal{L}$, that is a maximal (with respect to inclusion) leaf such that
$\overline{\mathcal{L}}\cap \operatorname{Supp}(M)\neq \varnothing$. It follows that
$\operatorname{Ann}(M)$ is a proper 2-sided ideal. This finishes the proof.
\end{proof}

%

\section{Procesi sheaves}
Set $Y:=V/\Gamma$ and let $X$ be a $\Q$-factorial terminalization of $Y$.
In this section we axiomatically define and construct a Procesi sheaf on $X$.
In the case when $\Gamma$ is a so called {\it wreath-product}
group, and so $X$ is smooth, the construction was carried out by Bezrukavnikov and Kaledin, \cite{BK},
and our construction follows theirs.

\subsection{Definition of Procesi sheaf}
\begin{defi}\label{defi_Procesi}
A $\C^\times$-equivariant coherent sheaf $\Pro$ on $X$ together with an isomorphism
$\End(\Pro)\xrightarrow{\sim}\C[V]\#\Gamma$ is called a {\it Procesi sheaf}
if the following holds:
\begin{itemize}
\item[(i)] The isomorphism $\End(\Pro)\xrightarrow{\sim}\C[V]\#\Gamma$ is $\C^\times$-equivariant
and $\C[Y]$-linear.
\item[(ii)] We have $H^i(X, \mathcal{E}nd(\Pro))=0$  for $i>0$.
\item[(iii)] $\Pro^{\Gamma}\cong \mathcal{O}_{X}$, an isomorphism of $\C^\times$-equivariant
coherent sheaves.
\item[(iv)] $\mathcal{E}nd(\Pro)$ is a maximal Cohen-Macaulay $\mathcal{O}_{X}$-module.
\end{itemize}
\end{defi}

Note a few standard consequences of these conditions.
By (iii),  $\Pro=\mathcal{E}nd(\Pro)e$ and hence (iv) implies that $\Pro$ is a maximal
Cohen-Macaulay $\mathcal{O}_{X}$-module. In particular, $\Pro|_{X^{reg}}$ is a vector
bundle.   So when $X$ is smooth we recover an axiomatic description of a Procesi
bundle from \cite[Section 1.1]{Procesi}.

Let us also note that the definition makes sense for  fields different from $\C$ as well. For example,
as we discussed in Section \ref{SS_Q_fac_term},  we can reduce $X\rightarrow Y$ modulo $p$ for
$p\gg 0$ getting a $\Q$-factorial terminalization
$X_{\F}\rightarrow Y_{\F}$ for $\F:=\overline{\F}_p$. We can define a Procesi sheaf
$\Pro_{\F}$ on $X_{\F}$ similarly. In fact, we will need a Frobenius twisted version
$\Pro^{(1)}_{\F}$ on $X^{(1)}_{\F}$, which is again defined completely analogously.

\subsection{Frobenius constant quantization}
Our construction of $\Pro$ closely follows that of \cite{BK}, see Sections 5 and 6 there.
The first step is to produce a Frobenius-constant quantization of $X_{\F}$
(where $\F$ is as before) with a specified algebra of global sections. Let us start by
explaining what we mean by a Frobenius constant quantization in this context.

\begin{defi}\label{defi_Frob_const}
A Frobenius constant quantization $\A$ of $X_{\F}$ is a coherent sheaf of algebras on $X_{\F}^{(1)}$
whose restriction to the conical topology is equipped with a separated ascending filtration
such that $\gr\A\cong \Fr_*\mathcal{O}_{X_{\F}}$.
\end{defi}

Let us deduce several corollaries of this definition.

\begin{Lem}\label{Lem:Frob_const_properties}
Let $\A$ be a Frobenius constant quantization.
Then the  following is true:
\begin{enumerate}
\item $\A|_{X^{(1)reg}_{\F}}$ is an Azumaya algebra.
\item $\A$ is a maximal Cohen-Macaulay $\mathcal{O}_{X^{(1)}_{\F}}$-module.
\item $H^i(X_{\F}^{(1)},\A)=0$ for all $i>0$ and $\gr H^0(X^{(1)}_{\F},\A)=\F[V_{\F}]^\Gamma$.
\end{enumerate}
\end{Lem}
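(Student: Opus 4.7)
The plan is to derive each part from the corresponding property of the associated graded sheaf $\gr \A \cong \Fr_* \mathcal{O}_{X_\F}$, combined with the mod-$p$ reduction of Lemma~\ref{Lem:X_properties} and the local structure of Frobenius-constant quantizations in characteristic $p$.

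For (1), I would restrict to the smooth locus. The morphism $\Fr \colon X^{reg}_\F \to X^{(1)reg}_\F$ is finite flat of degree $p^{\dim X}$, so $\Fr_* \mathcal{O}_{X^{reg}_\F}$ is locally free of rank $p^{\dim X}$ over $\mathcal{O}_{X^{(1)reg}_\F}$, and a standard filtered lifting argument shows $\A|_{X^{(1)reg}_\F}$ is itself locally free of the same rank. To upgrade this to the Azumaya property, I would pass to the formal neighborhood of an arbitrary closed point $\bar x \in X^{(1)reg}_\F$: the completion $\hat{\A}_{\bar x}$ is a Frobenius-constant filtered quantization of a formal symplectic polydisc over $\F$, and the classification used in \cite{BK} identifies it with the completed homogenized Weyl algebra $\Weyl^{\wedge_0}_\hbar$ in characteristic $p$. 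The latter is well known to be Azumaya of degree $p^{\dim X/2}$ over its center, which is precisely the Frobenius image of $\mathcal{O}$.

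For (2), I would use that being maximal Cohen-Macaulay is a local depth condition that lifts from the associated graded. By the mod-$p$ version of Lemma~\ref{Lem:X_properties}(1), $X_\F$ is Cohen-Macaulay, so $\mathcal{O}_{X_\F}$ is an MCM sheaf on $X_\F$; since $\Fr$ is a finite morphism between varieties of equal dimension, $\Fr_* \mathcal{O}_{X_\F}$ is an MCM $\mathcal{O}_{X^{(1)}_\F}$-module. A standard layer-by-layer depth argument then transfers the MCM property from $\gr \A$ to $\A$.

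For (3), I would argue by dévissage along the filtration. Since $\Fr$ is affine, Lemma~\ref{Lem:X_properties}(2) gives
\[
H^j\bigl(X^{(1)}_\F,\, \Fr_* \mathcal{O}_{X_\F}\bigr) = H^j\bigl(X_\F,\, \mathcal{O}_{X_\F}\bigr) = 0 \quad\text{for } j>0,
\]
so each graded piece $\gr_i \A$ has vanishing higher cohomology. The filtration on $\A$ is bounded below (the contracting $\C^\times$-action forces the grading on $\mathcal{O}_{X_\F}$ to be nonnegative) and exhaustive, so using the short exact sequences
\[
0 \to \A_{\leq i-1} \to \A_{\leq i} \to \gr_i \A \to 0
\]
together with induction on $i$, one obtains $H^j(X^{(1)}_\F, \A_{\leq i}) = 0$ for $j>0$. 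Passing to the colimit (cohomology commutes with filtered colimits on a noetherian scheme) yields $H^j(X^{(1)}_\F, \A) = 0$ for $j > 0$. The same short exact sequences remain exact on global sections, whence $\gr \Gamma(X^{(1)}_\F, \A) = \Gamma(X^{(1)}_\F, \Fr_* \mathcal{O}_{X_\F}) = \F[X_\F] = \F[Y_\F] = \F[V_\F]^\Gamma$, using Lemma~\ref{Lem:X_properties}(2) and $Y = V/\Gamma$.

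The main obstacle is (1): local freeness on $X^{(1)reg}_\F$ is routine, but extracting the Azumaya structure requires identifying the local formal model of $\A$ with the standard homogenized Weyl algebra, which is the key technical ingredient borrowed from \cite{BK}. Once this local identification is in hand, (2) and (3) reduce to essentially formal manipulations with filtered sheaves and Frobenius pushforward.
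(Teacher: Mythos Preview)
Your arguments for (2) and (3) coincide with the paper's: both deduce the MCM property of $\A$ from that of $\gr\A=\Fr_*\mathcal{O}_{X_\F}$ via a depth/filtration argument, and both obtain the cohomology vanishing and the identification of $\gr\Gamma(\A)$ by d\'evissage along the filtration using $H^i(X_\F,\mathcal{O}_{X_\F})=0$.

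For (1) the approaches diverge. You argue by identifying the formal completion of $\A$ at a smooth point with the completed homogenized Weyl algebra via a classification result from \cite{BK}, and then invoke the known Azumaya property of the latter. The paper instead gives a direct simplicity argument that avoids any classification: passing to the Rees sheaf $\A_\hbar$ and specializing at $x\in X^{(1)reg}_\F$, one obtains a flat $\F[[\hbar]]$-deformation $\A_{\hbar,x}$ of the Frobenius neighborhood of $x$. Since the symplectic form is nondegenerate there, that Frobenius neighborhood carries no proper Poisson ideals, so the only two-sided ideals of $\A_{\hbar,x}$ are the $(\hbar^k)$; hence $\A_{\hbar,x}[\hbar^{-1}]\cong \A_x\otimes_\F\F((\hbar))$ is simple, forcing $\A_x$ to be a matrix algebra of rank $p^{\dim Y}$. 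This route is shorter and entirely self-contained, whereas your route imports a structural result whose precise form (uniqueness of the local model versus merely Azumaya-ness of all such models) would need to be pinned down; either way the conclusion is the same, but the paper's argument sidesteps that bookkeeping.
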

\begin{proof}
Let us prove (1). Let $\A_\hbar$ denote the $\hbar$-adic completion of the Rees sheaf of the filtered sheaf $\A$.
Then we have a central inclusion $\mathcal{O}_{X^{(1)}_\F}[[\hbar]]\hookrightarrow
\A_\hbar$. Now pick $x\in X^{(1),reg}_{\F}$ and consider the specialization $\A_{\hbar,x}$
to that point. It is a formal deformation of the  Frobenius neighborhood of $x$
in $X_{\F}$. The algebra of functions on the Frobenius neighborhood has no proper
Poisson ideals. Therefore the only proper two-sided ideals in $\A_{\hbar,x}$ are those
generated by $\hbar^k$ with $k>0$. It follows that the localization $\A_{\hbar,x}[\hbar^{-1}]$
is a simple algebra. Note that $\A_{\hbar,x}[\hbar^{-1}]\cong \A_x\otimes_{\F}\F((\hbar))$.
Since $\A_x\otimes_{\F}\F((\hbar))$ is simple, we see that the algebra $\A_x$ is simple as well.
So $\A_x$ is a matrix algebra of dimension $p^{\dim Y}$. This proves (1).

Let us prove (2). Since $p\gg 0$ and $X$ is Cohen-Macaulay, we see that
$X_{\F}$ is Cohen-Macaulay. It follows that $\operatorname{Fr}_*\mathcal{O}_{X_{\F}}$
is a maximal Cohen-Macaulay $\mathcal{O}_{X_{\F}^{(1)}}$-module. Since $\A$ is a filtered deformation
of a maximal Cohen-Macaulay module, it is maximal Cohen-Macaulay as well.

(3) follows from $H^i(X_{\F}, \mathcal{O}_{X_\F})=0$ for all $i>0$ and $\gr \A=\Fr_*\mathcal{O}_{X_{\F}}$.
\end{proof}

\begin{Prop}\label{Prop:quant}
There is a Frobenius constant quantization $\A$ of $X_{\F}$ such that
we have an isomorphism $H^0(X_{\F}^{(1)}, \A)=\Weyl(V_{\F})^\Gamma$
of filtered $\F[V^{(1)}_{\F}]^\Gamma$-algebras, where we write $\Weyl(V_{\F})$
for the Weyl algebra of $V_{\F}$.
\end{Prop}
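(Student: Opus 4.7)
The strategy is to take the characteristic-zero quantization $\D_{\lambda_0}$ whose global sections are $\Weyl(V)^\Gamma$, reduce it modulo a large prime, and endow the reduction with a canonical Frobenius-central subsheaf. This follows the approach of \cite[Section 5]{BK}, adapted to the singular $\Q$-factorial terminalization setting.

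First, let $\lambda_0 \in \h$ be the element corresponding to $c = 0 \in \param$ under the affine identification $\lambda \mapsto c(\lambda)$ from Section \ref{SS_quant}. Since the defining relation of $H_0$ is just $u\otimes v - v\otimes u = \omega(u,v)$, we have $H_0 = \Weyl(V)\#\Gamma$ and hence $eH_0 e = \Weyl(V)^\Gamma$. Therefore $\A_{\lambda_0} = \Gamma(X, \D_{\lambda_0}) \cong \Weyl(V)^\Gamma$ as filtered $\C[V]^\Gamma$-algebras. Spread the triple $(X, \D_{\lambda_0}, \Gamma)$ over the arithmetic ring $R$ of the preliminaries (enlarging $R$ if necessary), and reduce modulo $p$ for $p \gg 0$ to obtain a sheaf $\D_{0,\F}$ of filtered $\F$-algebras on $X_\F$ (in the conical topology) with $\gr \D_{0,\F} \cong \mathcal{O}_{X_\F}$ and $\Gamma(X_\F, \D_{0,\F}) \cong \Weyl(V_\F)^\Gamma$ as filtered $\F[V_\F]^\Gamma$-algebras; the identification on global sections uses flat base change together with $H^i(X,\mathcal{O}_X) = 0$ for $i > 0$ from Lemma \ref{Lem:X_properties}(2).

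The central task is to construct a filtration-preserving central embedding $s: \mathcal{O}_{X_\F^{(1)}} \hookrightarrow \D_{0,\F}$, with $\mathcal{O}_{X_\F^{(1)}}$ filtered by $p$ times its natural grading. Locally on a conical open $U \subset X_\F^{(1)}$, for any local section $f$ of $\mathcal{O}_{X_\F}$ and any lift $\tilde f \in \D_{0,\F}(U)$, the $p$-th power $\tilde f^p$ has principal symbol $f^p$ and is central modulo the next step of the filtration; its class modulo lower filtration is independent of the choice of $\tilde f$. Globalizing these local data to an actual algebra map is an obstruction-theoretic problem whose obstructions live in appropriate graded pieces of $H^i(X_\F^{(1)}, \mathcal{O}_{X_\F^{(1)}})$ for $i = 1, 2$; these vanish by Lemma \ref{Lem:X_properties}(2) after base change, exactly as in \cite[Section 5]{BK}. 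Once $s$ is constructed, define $\A$ to be the pushforward $\Fr_* \D_{0,\F}$ viewed as a sheaf of algebras on $X_\F^{(1)}$ via $s$. The filtration on $\D_{0,\F}$ descends, and $\gr \A \cong \Fr_* \mathcal{O}_{X_\F}$, so $\A$ is a Frobenius constant quantization in the sense of Definition \ref{defi_Frob_const}. Taking global sections gives $\Gamma(X_\F^{(1)}, \A) = \Gamma(X_\F, \D_{0,\F}) \cong \Weyl(V_\F)^\Gamma$ as filtered $\F[V_\F^{(1)}]^\Gamma$-algebras, as required.

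The main obstacle is the globalization of the Frobenius center embedding. While the $p$-power construction is canonical at the level of associated graded, assembling the local central sections into a globally defined algebra map on a singular $X_\F^{(1)}$ requires that the cohomological obstructions vanish. The key inputs that make this work in our setting, just as in \cite[Section 5]{BK} for the smooth case, are the vanishing of the higher cohomology of the structure sheaf (Lemma \ref{Lem:X_properties}(2)) and the contracting $\C^\times$-action, which together force the relevant obstruction classes to live in degrees where they must vanish.
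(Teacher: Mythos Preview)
Your approach differs from the paper's in a substantive way, and the point of divergence is exactly where you locate the ``main obstacle.''

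The paper does \emph{not} reduce the characteristic-zero quantization $\D_{\lambda_0}$ modulo $p$ and then search for a Frobenius center. Instead it builds the Frobenius-constant quantization directly in characteristic $p$ on a large smooth open locus and extends. Concretely: let $X^{(1),sr}_\F$ be the preimage of the stratum of $V^{(1)}_\F/\Gamma$ where stabilizers have rank $\leq 2$. This is contained in the smooth locus, and by semismallness (Lemma~\ref{Lem:ssmall}) its complement has codimension $\geq 4$. On $X^{(1),sr}_\F$ the construction of \cite[Proposition 5.10]{BK} applies verbatim and already produces a Frobenius-constant quantization $\A^{sr}$ with global sections $\Weyl(V_\F)^\Gamma$; this extends uniquely to $X^{(1),reg}_\F$ by \cite[Proposition 5.11]{BK}, and then $\A:=i_*\A^{reg}$ is the desired sheaf on all of $X^{(1)}_\F$ by the usual codimension-$\geq 4$ argument (cf.\ \cite[Proposition 3.4]{BPW}).

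The gap in your route is the obstruction claim. You assert that the obstructions to assembling the local $p$-th-power data into a global central embedding $s:\mathcal{O}_{X^{(1)}_\F}\hookrightarrow \D_{0,\F}$ lie in $H^i(X^{(1)}_\F,\mathcal{O})$ for $i=1,2$, ``exactly as in \cite[Section 5]{BK}.'' But that is not what \cite{BK} proves. There the Frobenius-constant structure is baked into the quantization from the outset via the Harish-Chandra torsor/Fedosov formalism on \emph{smooth} symplectic varieties; the $p$-center is not glued afterward from local $p$-th powers, and the relevant obstruction spaces are Poisson/de Rham in nature, not $H^i(\mathcal{O})$. Your local description is also imprecise: while $\operatorname{ad}(\tilde f^p)=\operatorname{ad}(\tilde f)^p$ does drop filtration, the assignment $f\mapsto \tilde f^p$ is not multiplicative, so producing an \emph{algebra} map into the center requires the restricted-structure machinery, not a naive lift-by-lift count. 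On the singular locus none of this is available.

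Your argument can be repaired, but the repair is essentially the paper's proof: restrict the reduced sheaf $\D_{0,\F}$ to $X^{reg}_\F$, invoke \cite{BK} there (where smoothness holds) to obtain the Frobenius center, and push forward across codimension $\geq 4$. At that point the initial reduction from characteristic zero is no longer doing any work, since \cite[Proposition 5.10]{BK} already hands you the correct quantization on $X^{sr}_\F$ with the prescribed global sections.
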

\begin{proof}
Let $V^{(1),sr}_\F$ denote the open locus of $v\in V_\F^{(1)}$ such that
$\dim \left( V^{(1)}_\F\right)^{\Gamma_v}\geqslant \dim V-2$.
So $V^{(1),sr}_{\F}/\Gamma$ is the union of the open leaf and all codimension
$2$ symplectic leaves in $V^{(1)}_\F/\Gamma$. The preimage $X^{(1),sr}_{\F}$
of $V^{(1),sr}_{\F}/\Gamma$ in $X^{(1)}_{\F}$ consists of smooth points.
Moreover, the morphism $\rho_{\F}$ is semismall because, by Lemma \ref{Lem:ssmall}, $\rho$ is. Since
the complement to $V^{(1),sr}_{\F}/\Gamma$ has codimension at least $4$, and $\rho_\F$ is semismall,
it follows that
$$\operatorname{codim}_{X_{\F}^{(1)}}X_\F^{(1)}\setminus X_\F^{(1),sr}\geqslant 2.$$

Similarly to the proof of \cite[Proposition 5.10]{BK},  we get a Frobenius constant quantization
$\A^{sr}$ of $X^{sr}_{\F}$ (Frobenius constant quantization of $X^{sr}_{\F}$
are defined similarly to those  of $X_{\F}$) with $H^0(X^{(1)sr}_{\F},\A^{sr})=\Weyl(V_{\F})^{\Gamma}$
(an equality of filtered $\F[V_{\F}^{(1)}]^{\Gamma}$-algebras). Similarly to the proof
of \cite[Proposition 5.11]{BK}, we see that $\A^{sr}$ uniquely extends to  a Frobenius
constant quantization $\A^{reg}$ of $X_{\F}^{(1),reg}$. But $\gr \A^{reg}=
\operatorname{Fr}_* \mathcal{O}_{X^{reg}_{\F}}$ and the complement of
$X^{(1),reg}_{\F}$ in $X^{(1)}$ has codimension $4$.  Form here one deduces
that $\A:=i_*\A^{reg}$ is a  Frobenius constant quantization of
$X_{\F}$, compare to \cite[Proposition 3.4]{BPW}. By the construction, $\A$ has required properties.
\end{proof}

\subsection{Construction of Procesi sheaf in characteristic $p$}
Consider the schemes $\hat{V}_{\F}^{(1)}$ and  $\hat{Y}^{(1)}_{\F}$,
the spectra of the completions of    $\F[V^{(1)}]$ and $\F[Y^{(1)}]$ at $0$. Set
$$\hat{X}^{(1)}_{\F}:=\hat{Y}^{(1)}_{\F}\times_{Y^{(1)}_{\F}}X^{(1)}_{\F},\hat{\A}:=\A|_{\hat{X}^{(1)}_{\F}},
\hat{\Weyl}:=\F[\hat{V}_{\F}^{(1)}]\otimes_{\F[V_{\F}^{(1)}]}\Weyl(V_{\F}).$$
So we have $R\Gamma(\hat{\A})=\hat{\Weyl}^\Gamma$.

Set $$\hat{\A}^{reg}:=\hat{\A}|_{X^{(1)reg}_{\F}\cap \hat{X}^{(1)}_{\F}}.$$
By (1) of Lemma \ref{Lem:Frob_const_properties}, $\hat{\A}^{reg}$ is an Azumaya algebra.
Similarly to \cite[Section 6.3]{BK}, we see that $\hat{\A}^{reg}$
splits, while $\hat{\Weyl}|_{\hat{V}^{(1)}_{\F}}$
$\Gamma$-equivariantly splits (here we write $V^{(1)r}_{\F}$ for the locus in
$V^{(1)}_{\F}$ consisting of all points with trivial stabilizer in $\Gamma$).

Now let us define a sheaf of algebras $\underline{\hat{\A}}$ on $\hat{X}^{(1)}_{\F}$
that is Morita equivalent to $\hat{\A}$ and has global sections $\F[\hat{V}^{(1)}_{\F}]\#\Gamma$.
Similarly to \cite[Sections 6.1, 6.3]{BK}, the algebras $\hat{\Weyl}^{\Gamma}$ and $\F[\hat{V}^{(1)}_{\F}]\#\Gamma$
are Morita equivalent. It follows that we can find central idempotents $e_i\in \hat{\Weyl}^\Gamma$,
one per an irreducible representation of $\Gamma$ and nonzero multiplicities $n_i$
such that $\F[\hat{V}^{(1)}_{\F}]\#\Gamma=\bigoplus_{i,j} (e_i \hat{\Weyl}^\Gamma e_j)^{n_in_j}$.
Set $\underline{\hat{\A}}:=\bigoplus_{i,j} (e_i \hat{\A} e_j)^{n_in_j}$.

\begin{Lem}\label{Lem:hat_underline_prop}
The sheaf $\underline{\hat{\A}}$ on $\hat{X}^{(1)}_{\F}$ has the following properties.
\begin{enumerate}
\item $H^0(\hat{X}^{(1)}_{\F},\underline{\hat{\A}})=\F[\hat{V}^{(1)}_{\F}]\#\Gamma$.
\item $H^i(\hat{X}^{(1)}_{\F},\underline{\hat{\A}})=0$ for $i>0$.
\item $\underline{\hat{\A}}$ is a maximal Cohen-Macaulay $\mathcal{O}_{\hat{X}^{(1)}_{\F}}$-module.
\item $\underline{\hat{\A}}^{reg}:=\underline{\hat{\A}}|_{\hat{X}_\F^{(1)}\cap X^{(1)reg}_{\F}}$ is a split Azumaya algebra.
\item Let $e$ denote the averaging idempotent in $\F\Gamma$. Then $e\underline{\hat{\A}}e=
\mathcal{O}_{\hat{X}}$.
\item The sheaf of algebras $\underline{\hat{\A}}$ coincides with the endomorphism sheaf of
$\hat{\Pro}:=\underline{\hat{\A}}e$.
\end{enumerate}
\end{Lem}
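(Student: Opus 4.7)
The plan is to derive each property from facts about $\hat{\A}$ furnished by Lemma \ref{Lem:Frob_const_properties} (after base change to the formal neighborhood, which is flat) combined with the Morita-style construction of $\underline{\hat{\A}}$. The crucial observation is that by definition $\underline{\hat{\A}}=\bigoplus_{i,j}(e_i \hat{\A} e_j)^{n_in_j}$ is a direct summand of $\hat{\A}^{\oplus N}$ with $N=\sum_{i,j}n_in_j$ as a sheaf of $\mathcal{O}_{\hat{X}^{(1)}_{\F}}$-modules. Claims (2) and (3) then follow at once from Lemma \ref{Lem:Frob_const_properties}(2),(3), since cohomology vanishing and maximal Cohen-Macaulayness pass to direct summands. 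Claim (1) is verified by applying $\Gamma$ to the defining decomposition and using $\Gamma(\hat{X}^{(1)}_{\F},\hat{\A})=\hat{\Weyl}^\Gamma$: the result is $\bigoplus_{i,j}(e_i\hat{\Weyl}^\Gamma e_j)^{n_in_j}$, which is $\F[\hat{V}^{(1)}_{\F}]\#\Gamma$ by construction of the idempotents $e_i$ and multiplicities $n_i$.

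For (4), set $U:=\hat{X}^{(1)}_\F\cap X^{(1)reg}_\F$. Since $\hat{\A}^{reg}$ splits, write $\hat{\A}^{reg}=\mathcal{E}nd(M)$ for a $\Gamma$-equivariant vector bundle $M$ on $U$. Decomposing $M$ into isotypic components $M=\bigoplus_i V_i\otimes M_i$ indexed by the irreducible $\Gamma$-representations $V_i$, the central idempotents $e_i\in \hat{\Weyl}^\Gamma$ project onto these isotypic summands, and a direct computation gives $\underline{\hat{\A}}^{reg}=\mathcal{E}nd\bigl(\bigoplus_i M_i^{n_i}\bigr)$, which is a split Azumaya algebra.

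Claims (5) and (6) are the heart of the lemma. For (5), the averaging idempotent $e\in \F\Gamma$, viewed as a global section of $\underline{\hat{\A}}$ via (1), produces a natural morphism $\mathcal{O}_{\hat{X}^{(1)}_\F}\to e\underline{\hat{\A}}e$. On global sections this is the isomorphism $\F[\hat{Y}^{(1)}_\F]=\F[\hat{V}^{(1)}_\F]^\Gamma=e(\F[\hat{V}^{(1)}_\F]\#\Gamma)e$, and on $U$ it is the manifest identification $\mathcal{O}_U=e\,\mathcal{E}nd(M)\,e$ read off from (4). Both source and target are maximal Cohen-Macaulay on $\hat{X}^{(1)}_\F$ (the target as a summand of $\underline{\hat{\A}}$, by (3)), so they satisfy Serre's condition $S_2$; since $\operatorname{codim}(\hat{X}^{(1)}_\F\setminus U)\geqslant 2$ (coming from normality of $X$, reinforced by semismallness of $\rho_\F$), the agreement on $U$ extends uniquely to a sheaf isomorphism on all of $\hat{X}^{(1)}_\F$. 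Claim (6) then follows from the same $S_2$-extension applied to the natural morphism $\underline{\hat{\A}}\to \mathcal{E}nd_{\mathcal{O}}(\hat{\Pro})$, which is an isomorphism on $U$ by the explicit Morita description in (4), and whose source and target are again both maximal Cohen-Macaulay.

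The main obstacle is the $S_2$-extension used in (5) and (6): one must reliably identify the relevant sheaves as maximal Cohen-Macaulay and carry out the local computation on $U$ explicitly enough to read off the desired isomorphism from the splitting of (4). This is exactly where the Cohen-Macaulay hypothesis on $X$ and the codimension bound from semismallness play their roles, and these ingredients are precisely what allow us to port the Bezrukavnikov-Kaledin strategy from the smooth case to the singular terminalization.
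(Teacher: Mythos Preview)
Your approach is essentially the paper's: deduce (1)--(3) from Lemma~\ref{Lem:Frob_const_properties} via the direct-summand observation, get (4) from the splitting of $\hat{\A}^{reg}$, and prove (5)--(6) by an $S_2$/Cohen--Macaulay extension from the regular locus. Two small corrections are in order.

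In (4), the bundle $M$ cannot be $\Gamma$-equivariant: $\Gamma$ does not act on $\hat{X}^{(1)}_{\F}$ (only on $\hat{V}^{(1)}_{\F}$), so the isotypic decomposition as written makes no sense. The fix is immediate and is what the paper intends: the (orthogonal) idempotents $e_i\in\hat{\Weyl}^\Gamma=\Gamma(\hat{\A})$ act on $M$ as global sections of $\hat{\A}^{reg}=\mathcal{E}nd(M)$, giving $M=\bigoplus_i e_iM$, and then $\underline{\hat{\A}}^{reg}=\bigoplus_{i,j}\mathcal{H}om(e_jM,e_iM)^{n_in_j}=\mathcal{E}nd\bigl(\bigoplus_i (e_iM)^{n_i}\bigr)$.

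In (6), you assert that $\mathcal{E}nd_{\mathcal{O}}(\hat{\Pro})$ is maximal Cohen--Macaulay; this is not automatic from $\hat{\Pro}$ being MCM. What you actually need (and have) is that it is reflexive, hence $S_2$, since $\hat{\Pro}$ is reflexive. The paper circumvents this by first noting $\hat{\Pro}=i_*i^*\hat{\Pro}$ (from MCM) and then using adjunction to get $\mathcal{E}nd(\hat{\Pro})=i_*\mathcal{E}nd(i^*\hat{\Pro})$ directly; either route works.
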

\begin{proof}
(1)-(3) follow from the construction of $\underline{\hat{\A}}$ and the analogous
properties of $\A$, see Lemma \ref{Lem:Frob_const_properties}. (4) follows
from the discussion above in this section.

Let us prove (5). We first claim that the generic rank of $e\underline{\hat{\A}}e$ is $1$.
Indeed, the restriction of $e\underline{\hat{\A}}e$ to $\hat{X}^{(1)}_{\F}\cap \rho^{-1}_\F(Y_\F^{(1)reg})$
coincides with the restriction of $e' \Weyl(V_\F)^\Gamma e'$, where $e'$ is a primitive
idempotent corresponding to the trivial representation of $\Gamma$. The latter restriction
is easily seen to be the structure sheaf. Since $e\underline{\hat{\A}}^{reg}e$ is a split
Azumaya algebra, it is the structure sheaf.
Since
both $\mathcal{O}_{\hat{X}^{(1)}_\F}$ and $e\underline{\hat{\A}}e$ are maximal Cohen-Macaulay
and codimension of the singular locus is at least $4$,
we see that $\mathcal{O}_{\hat{X}^{(1)}_\F} \xrightarrow{\sim} e\underline{\hat{\A}}e$.

Let us prove (6). By the construction, $\hat{\Pro}$ is a maximal Cohen-Macaulay
$\mathcal{O}_{\hat{X}^{(1)}_\F}$-module. So $\hat{\Pro}=i_*i^* \hat{\Pro}$, where we write
$i$ for the inclusion $\hat{X}^{(1)reg}_\F\hookrightarrow \hat{X}^{(1)}_\F$.
It follows that $\mathcal{E}nd(\hat{\Pro})=i_*\mathcal{E}nd(i^*\hat{\Pro})$.
Similarly, $\underline{\hat{\A}}=i_*i^*\underline{\hat{\A}}$. And since
$i^*\underline{\hat{\A}}$ is a split Azumaya algebra with $e(i^*\underline{\hat{\A}})e\cong
\mathcal{O}_{\hat{X}^{(1)reg}}$, we have $i^*\underline{\hat{\A}}=
\mathcal{E}nd(i^*\hat{\Pro})$. This implies (6).
\end{proof}

So the sheaf $\hat{\Pro}$ behaves almost like a Procesi sheaf with two differences:
it does not carry an action of $\F^\times$ yet and it is defined on $\hat{X}^{(1)}_{\F}$,
while we originally wanted a sheaf on $X$. For this we first equip it with an
$\F^\times$-equivariant structure and extend to a sheaf $\Pro_{\F}^{(1)}$ $X_{\F}^{(1)}$.

\begin{Lem}\label{Lem:ext1}
There is a unique Procesi sheaf $\Pro_{\F}^{(1)}$ on $X_{\F}^{(1)}$ whose restriction to
$\hat{X}^{(1)}_{\F}$ coincides with $\hat{\Pro}$.
\end{Lem}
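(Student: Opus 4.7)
The plan is to mimic the construction of $\hat{\Pro}$ globally on $X^{(1)}_{\F}$ by first promoting $\hat{\Pro}$ to an $\F^\times$-equivariant sheaf and then transferring the idempotent-based construction to $X^{(1)}_{\F}$ via $\F^\times$-finite sections of $\Gamma(\hat{X}^{(1)}_{\F},\hat{\A}) = \hat{\Weyl}^\Gamma$.

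The first step is to arrange for the central idempotents $e_i \in \hat{\Weyl}^\Gamma$ used to define $\underline{\hat{\A}}$ to be $\F^\times$-invariant. These idempotents are indexed by irreducible representations of $\Gamma$ via the Morita equivalence with $\F[\hat{V}^{(1)}_{\F}]\#\Gamma$, and on the associated graded level they reduce to the manifestly $\F^\times$-invariant projectors onto $\Gamma$-isotypic components of $\Fr_{*}\F[\hat{V}^{(1)}_{\F}]$. Since primitive central idempotents lift uniquely from the associated graded of a complete filtered algebra, I can take the $e_i$ themselves to be $\F^\times$-invariant. This endows $\underline{\hat{\A}}$ and $\hat{\Pro} = \underline{\hat{\A}}e$ with $\F^\times$-equivariant structures.

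Next, $\F^\times$-invariant $e_i$ lie in the $\F^\times$-finite part of $\hat{\Weyl}^\Gamma$, which is exactly $\Weyl(V_{\F})^\Gamma = \Gamma(X^{(1)}_{\F},\A)$. I then set $\underline{\A} := \bigoplus_{i,j}(e_i \A e_j)^{n_in_j}$ on $X^{(1)}_{\F}$ and define $\Pro^{(1)}_{\F} := \underline{\A}e$. Its completion along the central fiber recovers $\hat{\Pro}$ by construction, and the Procesi axioms are verified exactly as in Lemma \ref{Lem:hat_underline_prop}, now using the global properties of $\A$ from Lemma \ref{Lem:Frob_const_properties}: cohomology vanishing from part (3), maximal Cohen-Macaulayness from part (2), and the identification $e\underline{\A}e = \mathcal{O}_{X^{(1)}_{\F}}$ exactly as in Lemma \ref{Lem:hat_underline_prop}(5), applied globally using that $X^{(1)}_{\F}\setminus X^{(1)reg}_{\F}$ has codimension at least two.

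For uniqueness, any two Procesi sheaves extending $\hat{\Pro}$ agree on the formal neighborhood $\hat{X}^{(1)}_{\F}$ and are both $\F^\times$-equivariant; since the $\F^\times$-orbit of $\hat{X}^{(1)reg}_{\F}$ covers $X^{(1)reg}_{\F}$, they agree as equivariant vector bundles on $X^{(1)reg}_{\F}$, and by the maximal Cohen-Macaulay property the isomorphism extends to $X^{(1)}_{\F}$ via reflexive pushforward. I expect the main obstacle to be the first step: since the $e_i$ are produced by an abstract Morita equivalence rather than by explicit formulas, tracking the $\F^\times$-equivariance of the entire construction (in particular, the lift from the associated graded, and the compatibility of this lift with the Morita bimodule) requires running the BK formalism in an $\F^\times$-equivariant manner throughout.
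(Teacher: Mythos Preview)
Your approach has a genuine gap at the very first step. The algebra $\Weyl(V_{\F})^\Gamma=\Gamma(X^{(1)}_{\F},\A)$ is a domain: it is filtered with associated graded $\F[V_\F]^\Gamma$, which is an integral domain, and a filtered algebra whose associated graded is a domain is itself a domain. Hence $\Weyl(V_{\F})^\Gamma$ contains no idempotents other than $0$ and $1$. The orthogonal idempotents $e_i$ exist only in the completion $\hat{\Weyl}^\Gamma$; they cannot descend to the uncompleted global sections, and therefore your global sheaf $\underline{\A}:=\bigoplus_{i,j}(e_i\A e_j)^{n_in_j}$ is not defined on $X^{(1)}_{\F}$.

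There is a related problem with the $\F^\times$-invariance claim. The contracting $\F^\times$ does \emph{not} act on $\Weyl(V_\F)$ (or on $\A$) by algebra automorphisms: under $t\cdot v=tv$ the relation $[u,v]=\omega(u,v)$ scales by $t^2$ on the left and is fixed on the right. So ``$\F^\times$-invariant idempotent in $\hat{\Weyl}^\Gamma$'' has no direct meaning, and your lifting-from-the-associated-graded argument does not get off the ground. (Your description of the associated graded is also off: $\gr\hat{\Weyl}^\Gamma$ for the degree filtration is $\F[V_\F]^\Gamma$, a local domain with no nontrivial idempotents; the isotypic projectors you mention live in a different algebra.) This is why the paper does not try to globalize the idempotents. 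Instead it works on the regular locus, where $\hat{\Pro}$ is a vector bundle, and uses a rigidity argument (after Vologodsky) to show that the isomorphism class of $i^*\hat{\Pro}$ is $\F^\times$-invariant and then to promote this to an equivariant structure; finally the contracting property of the action is used to extend from $\hat{X}^{(1)}_{\F}$ to $X^{(1)}_{\F}$. That geometric step is precisely what replaces the (impossible) algebraic descent of the $e_i$.
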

\begin{proof}
The proof is in several steps.

{\it Step 1}. Let us equip $\hat{\Pro}$ with an $\F^\times$-equivariant structure.  Our proof follows \cite{Vologodsky} with some modifications.

Let us introduce some notation first. We write $Y$ for
$\hat{Y}_\F^{(1)}$ and $X$ for $\hat{X}_\F^{(1)}$. Let $\mathfrak{m}$ denote the maximal
ideal in $\F[Y]$. For $\ell>0$, we write $Y_\ell$ for $\operatorname{Spec}(\F[Y]/\mathfrak{m}^\ell)$
and $X_\ell$ for $Y_\ell\times_Y X$. We write $m,p_2$ for the multiplication and second projection
morphisms $\F^\times\times X\rightarrow X, \F^\times\times X_\ell\rightarrow X_\ell$. Finally, for a coherent
sheaf $\mathcal{F}$ on $\F^\times\times X$, we write
$\mathcal{F}^\wedge$ for the restriction of $\mathcal{F}$ to the formal neighborhood of
$\{1\}\times X$ in $\F^\times \times X$. This is an exact functor. We use the same notation
for coherent sheaves on $\F^\times\times X_\ell$.

Consider the sheaves $m^* \hat{\Pro}, p_2^* \hat{\Pro}$ on $\F^\times\times X$.  Note that
\begin{equation}\label{eq:restric_iso}(m^* \hat{\Pro})^\wedge\cong (p_2^* \hat{\Pro})^\wedge.\end{equation}
Indeed, $\operatorname{Ext}^1(i^* \hat{\Pro},i^*\hat{\Pro})=0$
so the restrictions of $m^* i^* \hat{\Pro}, p_2^* i^* \hat{\Pro}$ to the formal neighborhood of $\{1\}\times X^{reg}$
are isomorphic. (\ref{eq:restric_iso}) follows from this isomorphism thanks to $\hat{\Pro}=i_*i^* \hat{\Pro}$.

{\it Step 2}.
Let us write $\hat{\Pro}_\ell$ for the restriction of $\hat{\Pro}$ to $X_\ell$. It follows from
(\ref{eq:restric_iso}) that
\begin{equation}\label{eq:restric_iso1}(m^* \hat{\Pro}_\ell)^\wedge\cong (p_2^* \hat{\Pro}_\ell)^\wedge.\end{equation}
Consider the coherent sheaf $\mathcal{M}_\ell:=\mathcal{H}om(m^* \hat{\Pro}_\ell, p_2^* \hat{\Pro}_\ell)$
on $\F^\times \times X_\ell$. We have
\begin{equation}\label{eq:compl_iso_equiv}\mathcal{M}_\ell^\wedge=\mathcal{H}om((m^* \hat{\Pro}_\ell)^{\wedge}, (p_2^* \hat{\Pro}_\ell)^\wedge).\end{equation}
Thanks to (\ref{eq:restric_iso1}), the push-forward of the right hand side of (\ref{eq:compl_iso_equiv})
to the formal neighborhood of $1$ in $\F^\times$ is a free module. Now we use the formal function theorem
for the projection $p_1:\F^\times \times X_\ell\rightarrow X_\ell$ together with   (\ref{eq:compl_iso_equiv})
to conclude that $p_{1*}\mathcal{M}$ is a vector bundle on some neighborhood of $1$ in $\F^\times$.
As in the proof of \cite[Lemma 6.1]{Vologodsky}, this implies $m^* \hat{\Pro}_\ell\cong p_2^* \hat{\Pro}_\ell$
for all $\ell$.
Now we can argue as in the last paragraph of the proof of \cite[Proposition 6.3]{Vologodsky},
to conclude that $\hat{\Pro}$ has an $\F^\times$-equivariant structure.

{\it Step 3}. Let us show that we can choose an $\F^\times$-equivariant structure on
$i^*\hat{\Pro}$ in such a way that the isomorphism $\End(i^*\hat{\Pro})\xrightarrow{\sim}
\F[[V^{(1)}_\F]]\#\Gamma$ is $\F^\times$-equivariant.  Pick a basis in each irreducible representation of $\Gamma$
and let $\mathcal{B}$ denote the union of these bases. The algebra
$\End(i^*\hat{\Pro})/(\mathfrak{m})$ is finite dimensional. So, for each $v\in \mathcal{B}$,
we can choose an $\F^\times$-invariant primitive idempotent in this algebra corresponding
to $v$. Then we can lift that idempotent to an $\F^\times$-invariant idempotent
in $\F[[V^{(1)}_\F]]\#\Gamma$, denote the resulting idempotent by $e_v$.

 We  decompose $i^*\hat{\Pro}$ as $\bigoplus_{v\in \mathcal{B}} \hat{\Pro}^{reg}_v$,
where  $\hat{\Pro}^{reg}_v:=e_v i^*\hat{\Pro}$
is an indecomposable vector bundle on $\hat{X}^{(1)reg}_{\F}$.  Each
$\hat{\Pro}^{reg}_v$ is $\F^\times$-stable. We note that $H^0(\hat{\Pro}^{reg}_v)=
e_v \F[[V^{(1)}]]$. The $\F[[V^{(1)}]]^\Gamma$-module $e_v \F[[V^{(1)}]]$ is indecomposable,
so an $\F^\times$-equivariant structure on $e_v \F[[V^{(1)}]]$
is unique up to a twist with a character. So twisting the $\F^\times$-equivariant structure on each
$\Pro^{reg}_v$ we achieve that the isomorphism  $\End(i^*\hat{\Pro})\xrightarrow{\sim}
\F[[V^{(1)}_\F]]\#\Gamma$ is $\F^\times$-equivariant.

{\it Step 4}. Since the action of $\F^\times$ on $X^{(1)}_{\F}$ is contracting and $X^{(1)}_{\F}$
is projective over an affine scheme, we see that $\hat{\Pro}_{\F}$ extends to a unique
$\F^\times$-equivariant sheaf  $\Pro_{\F}^{(1)}$ on $X_{\F}^{(1)}$, see \cite[Section 2.3]{BK}.
Note that $\mathcal{E}nd(\Pro_{\F}^{(1)})$
is a unique $\F^\times$-equivariant extension of $\mathcal{E}nd(\hat{\Pro}_{\F})$. Now to check
that $\Pro^{(1)}_{\F}$ is a Procesi sheaf is straightforward.
\end{proof}

So we have got a Procesi bundle $\Pro_{\F}^{(1)}$ on $X_\F^{(1)}$. We carry $\Pro_\F^{(1)}$ to $X_\F$
using the natural $\operatorname{Spec}\mathbb{Z}$-scheme isomorphism $X_\F\cong X_\F^{(1)}$.
The resulting vector bundle, to be denoted by $\Pro_\F$, is a Procesi bundle.

\subsection{Construction of Procesi sheaf in characteristic $0$}
We can lift the Procesi sheaf to characteristic $0$. We follow an argument in
\cite[Section 2.3]{BK} with some modifications.
Namely, $X_\F, \Pro_\F$ are defined over some finite field $\F_q$, let $X_{\F_q}, \Pro_{\F_q}$ be the corresponding
reductions. Let $R$ denote an integral extension of $\Z_p$ with residue field $\F_q$. We can assume that
$X$ is defined over $R$, let $X_R$ denote the corresponding form so that $X_{\F_q}=\operatorname{Spec}(\F_q)\times_{\operatorname{Spec}(R)}X_R$.

\begin{Lem}\label{Lem:Procesi_R_extension}
$\Pro_{\F_q}$ uniquely extends to a Procesi bundle $\Pro_R$ on $X_R$.
\end{Lem}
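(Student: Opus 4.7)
The plan is to adapt \cite[Section 2.3]{BK}: lift $\Pro_{\F_q}$ compatibly through the Artinian quotients $R_n:=R/\mathfrak{m}^{n+1}$, then algebraize via Grothendieck's formal existence theorem. Setting $X_{R_n}:=X_R\times_R \operatorname{Spec}R_n$, I construct inductively a compatible family of $R_n$-flat coherent sheaves $\Pro_{R_n}$ on $X_{R_n}$ with $\Pro_{R_0}=\Pro_{\F_q}$, each satisfying the Procesi axioms over $R_n$. Since $X_R\to Y_R$ is projective and $Y_R$ is affine over $\operatorname{Spec}R$, the resulting inverse system algebraizes: working affine-locally on $Y_R$, over each affine open $V\subset Y_R$ the restricted morphism $X_R|_V\to V$ is proper and Grothendieck's theorem applies, and the local algebraizations glue by uniqueness.

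At each inductive stage, standard deformation theory of coherent sheaves applies: the obstruction to lifting $\Pro_{R_{n-1}}$ to an $R_n$-flat sheaf on $X_{R_n}$ sits in $\Ext^2_{X_{\F_q}}(\Pro_{\F_q},\Pro_{\F_q})\otimes_{\F_q}(\mathfrak{m}^n/\mathfrak{m}^{n+1})$, and the isomorphism classes of lifts, when nonempty, form a torsor over $\Ext^1_{X_{\F_q}}(\Pro_{\F_q},\Pro_{\F_q})\otimes_{\F_q}(\mathfrak{m}^n/\mathfrak{m}^{n+1})$. The heart of the argument is therefore to prove $\Ext^i_{X_{\F_q}}(\Pro_{\F_q},\Pro_{\F_q})=0$ for $i>0$. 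Via the local-to-global spectral sequence $H^p(\mathcal{E}xt^q(\Pro_{\F_q},\Pro_{\F_q}))\Rightarrow \Ext^{p+q}(\Pro_{\F_q},\Pro_{\F_q})$, axiom (ii) immediately disposes of the $q=0$ row $H^p(\mathcal{E}nd(\Pro_{\F_q}))$, so it suffices to show $\mathcal{E}xt^q_{\mathcal{O}_{X_{\F_q}}}(\Pro_{\F_q},\Pro_{\F_q})=0$ for $q>0$. The key point is that $A_{\F_q}:=\mathcal{E}nd(\Pro_{\F_q})=\F_q[V_{\F_q}]\#\Gamma$ behaves as a noncommutative crepant resolution of $\mathcal{O}_{Y_{\F_q}}$: it has finite global dimension, and $\Pro_{\F_q}=A_{\F_q}\cdot e$ (for the averaging idempotent $e$) is a projective $A_{\F_q}$-module. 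Combined with the isotypic decomposition coming from axiom (iii) (which in particular exhibits $\mathcal{O}_{X_{\F_q}}$ as a direct summand of $\Pro_{\F_q}$, hence $\Pro_{\F_q}$ as a local progenerator), this reduces $\Ext^*_{\mathcal{O}_{X_{\F_q}}}(\Pro_{\F_q},\Pro_{\F_q})$ to $\Ext^*_{A_{\F_q}}(A_{\F_q},A_{\F_q})$, which vanishes in positive degree trivially.

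Granted the Ext vanishing, each inductive stage produces a lift that is unique up to (non-canonical) isomorphism, and the algebra isomorphism $\End(\Pro_{R_n})\cong R_n[V_{R_n}]\#\Gamma$ propagates because $R_n[V_{R_n}]\#\Gamma$ is the canonical flat lift of $\F_q[V_{\F_q}]\#\Gamma$ and the relevant first-order algebra-deformation obstruction for this smash product vanishes. Flat base change carries axioms (ii) and (iv) up to $X_R$, while axiom (iii) follows by Nakayama from the special-fibre statement. The main obstacle I foresee is precisely the positive-degree local Ext vanishing $\mathcal{E}xt^q_{\mathcal{O}_{X_{\F_q}}}(\Pro_{\F_q},\Pro_{\F_q})=0$: for smooth $X$ it is automatic, as in \cite{BK}, because $\Pro$ is then locally free, but in the singular $\Q$-factorial setting one must carefully combine the MCM axiom (iv) with the NCCR structure of $A_{\F_q}$ in order to push the calculation through at singular points. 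This is precisely where the ``modifications'' to the BK argument mentioned in the text must be carried out.
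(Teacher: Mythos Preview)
Your argument has a genuine gap at exactly the point you flagged as the main obstacle: the vanishing of $\mathcal{E}xt^q_{\mathcal{O}_{X_{\F_q}}}(\Pro_{\F_q},\Pro_{\F_q})$ for $q>0$. The reduction you propose does not go through. Having $\mathcal{O}_{X_{\F_q}}$ as a summand of $\Pro_{\F_q}$ makes $\Pro_{\F_q}$ a generator, but a \emph{pro}generator must also be locally projective, i.e.\ locally free over $\mathcal{O}_{X_{\F_q}}$. At singular points $\Pro_{\F_q}$ is only maximal Cohen--Macaulay, not locally free, so there is no local Morita equivalence between $\mathcal{O}_{X_{\F_q}}$-modules and $\mathcal{E}nd(\Pro_{\F_q})$-modules, and no way to identify $\Ext^*_{\mathcal{O}_{X_{\F_q}}}(\Pro_{\F_q},\Pro_{\F_q})$ with $\Ext^*_{A_{\F_q}}(A_{\F_q},A_{\F_q})$. (Indeed, that identification is essentially equivalent to what you are trying to prove.) The NCCR heuristic is about $\End(\Pro)$ having finite global dimension as an algebra, which says nothing about local $\mathcal{E}xt$ sheaves over $\mathcal{O}_X$.

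The paper avoids this problem entirely by never attempting to deform $\Pro_{\F_q}$ on all of $X_{\F_q}$. Instead it restricts to $X_{\F_q}^{reg}$, where $i^*\Pro_{\F_q}$ \emph{is} a vector bundle and the local $\mathcal{E}xt$ sheaves vanish trivially. The required global vanishing $\Ext^i(i^*\Pro_{\F_q},i^*\Pro_{\F_q})=0$ for $i=1,2$ then follows from axiom (ii) together with the MCM property of $\mathcal{E}nd(\Pro_{\F_q})$ and $\operatorname{codim}X^{sing}\geqslant 4$ (a depth/local-cohomology comparison between $H^i(X^{reg},-)$ and $H^i(X,-)$). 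One deforms the bundle on the regular locus, then pushes forward along $\hat\iota:X_R^\circ\hookrightarrow X_R^{\wedge}$; the same MCM-plus-codimension input gives $R^1i_*\,i^*\Pro_{\F_q}=0$, which is what guarantees that the pushforward is still $R$-flat. Finally, algebraization is handled not via Grothendieck existence (your affine-local patching over $Y_R$ is delicate since $Y_R\to\operatorname{Spec}R$ is not of finite type in the relevant sense) but by using the contracting $\mathbb{G}_m$-action to pass from the formal neighborhood of the special fibre to all of $X_R$. If you rewrite your inductive lifting on $X^{reg}$ rather than on $X$, and replace the progenerator argument by the MCM/codimension comparison, your outline becomes essentially the paper's proof.
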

\begin{proof}
Consider the open subscheme $X_{\F_q}^{reg}\subset X_{\F_q}$, the complement still has codimension
at least $4$. Let $X_R^{\wedge}$ denote the formal neighborhood of $X_{\F_q}$ in $X_R$ and let
$X_R^\circ$ be the formal neighborhood of $X_{\F_q}^{reg}$ in $X_R$ so that we have an inclusion
of formal schemes $\hat{\iota}:X_R^\circ\hookrightarrow X_R$. So $X_R^\circ$ is a formal deformation of
$X_{\F_q}^{reg}$.

Note that $\Ext^i(i^*\Pro_{\F_q},i^*\Pro_{\F_q})=0$ for $i=1,2$ because $\mathcal{E}nd(\Pro_{\F_q})$
is maximal Cohen-Macaulay and $\operatorname{codim}_{X_{\F_q}}X_{\F_q}^{sing}\geqslant 4$.
So $i^*\Pro_{\F_q}$ admits a unique $\mathbb{G}_m$-equivariant deformation to
$X_R^\circ$, let us denote it by $\Pro_R^\circ$.
Also the  condition $\operatorname{codim}_{X_{\F_q}}X_{\F_q}^{sing}\geqslant 4$ and the
fact that $\Pro_{\F_q}$ is maximal Cohen-Macaulay imply $(R^1i_*)i^*\Pro_{\F_q}=0$.
So $\Pro_R^{\wedge}:=\hat{\iota}_*\Pro_R^\circ$ is an
$R$-flat deformation of $i_*i^*\Pro_{\F_q}=\Pro_{\F_q}$. For a similar reason,
$\mathcal{E}nd(\Pro_R^\wedge)$ is an $R$-flat deformation of $\mathcal{E}nd(\Pro_{\F_q})$.
Because of the $\mathbb{G}_m$-equivariance we can extend $\Pro_R^{\wedge}$ to a unique $\mathbb{G}_m$-equivariant
coherent  sheaf $\Pro_R$ on $X_R$. That $\End(\Pro_R)$ is isomorphic to $R[V_R]\#\Gamma$  is proved similarly
to \cite[Section 6.4]{BK}, as the assumptions of \cite[Proposition 4.3]{BK} hold in our case for the same
reason they hold in \cite{BK}. The other axioms of the Procesi sheaves for $\Pro_R$
are straightforward.
\end{proof}

Then we can  change the base from $R$ to $\C$ and get a required Procesi sheaf on $X$.

\begin{Rem}\label{Rem:Procesi_classif}
One can also classify all Procesi bundles on $X$ similarly to \cite{Procesi}: they are
in bijection with the Namikawa-Weyl group of $Y$. The proof basically repeats
\cite[Sections 2,3]{Procesi}.
\end{Rem}

\section{Derived equivalences from Procesi sheaves}
In this section we are going to use the Procesi sheaf $\Pro$ on $X$ to prove Theorems \ref{Thm:der_equiv1}
and \ref{Thm:der_equiv2}. We set $\mathcal{H}^0:=\mathcal{E}nd(\Pro)$, this is
a maximal Cohen-Macaulay sheaf on $X$.

\subsection{Quantizations of the Procesi sheaf}
Let $\mathcal{D}_c$ be the quantization of $X$ corresponding to $c\in \param$. Let us write
$\mathcal{D}_c^{reg}$ for the restriction of $\mathcal{D}_c$ to $X^{reg}$ and $\Pro^{reg}$
for the restriction of $\Pro$ to $X^{reg}$. As in the proof of Lemma \ref{Lem:Procesi_R_extension},
we see that $\Ext^i(\Pro^{reg},\Pro^{reg})=0$
for $i=1,2$. It follows that $\Pro^{reg}$ admits a unique deformation to  a locally free
right $\mathcal{D}_c^{reg}$-module to be denoted by $\Pro^{reg}_c$. As usual, we set
$\Pro_c:=i_*\Pro^{reg}_c$, where $i$ stands for the inclusion $X^{reg}\hookrightarrow X$.
We can also define the universal version $\Pro_{\param}^{reg}$
on $X^{reg}$ and its push-forward $\Pro_{\param}:=i_* \Pro^{reg}_{\param}$. Let us write
$\mathcal{H}_c$ for $\mathcal{E}nd_{\mathcal{D}_c^{opp}}(\Pro_c)$.

\begin{Prop}\label{Prop:quant_Procesi_endom}
The following claims are true:
\begin{enumerate}
\item $\gr\mathcal{H}_c=\mathcal{H}^0$ for all $c$.
\item $\Gamma(\mathcal{H}_c)=H_{w(c)}$, where $w$ is an element of the Namikawa-Weyl group
depending only on $\Pro$.
\end{enumerate}
\end{Prop}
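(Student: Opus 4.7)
The plan is to prove (1) on $X^{reg}$ by standard filtered deformation theory and extend to $X$ using the maximal Cohen--Macaulay property of $\mathcal{H}^0$; and to derive (2) by identifying $\Gamma(\mathcal{H}_c)$ as a filtered quantization of $\C[V]\#\Gamma$, and then matching parameters via the spherical subalgebra. For (1), restrict first to $X^{reg}$: there $\Pro_c^{reg}$ is a locally free filtered deformation of $\Pro^{reg}$ over $\mathcal{D}_c^{reg}$, so $\mathcal{H}_c|_{X^{reg}} = \mathcal{E}nd_{\mathcal{D}_c^{reg,opp}}(\Pro_c^{reg})$ is a filtered deformation of $\mathcal{H}^0|_{X^{reg}}$, giving $\gr \mathcal{H}_c|_{X^{reg}} = \mathcal{H}^0|_{X^{reg}}$. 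Writing $i\colon X^{reg} \hookrightarrow X$ for the inclusion, the definition $\Pro_c := i_* \Pro_c^{reg}$ together with local freeness of $\Pro_c^{reg}$ gives $\mathcal{H}_c = i_* (\mathcal{H}_c|_{X^{reg}})$ by Hom-pushforward adjunction, while axiom (iv) of Definition \ref{defi_Procesi} combined with $\operatorname{codim}_X(X \setminus X^{reg}) \geqslant 2$ yields $\mathcal{H}^0 = i_* i^* \mathcal{H}^0$. Working on the level of Rees sheaves $R_\hbar(\mathcal{H}_c)$ and using vanishing of $R^1 i_*$ coming from maximal Cohen--Macaulayness, one commutes the $\hbar = 0$ quotient with $i_*$ to conclude $\gr \mathcal{H}_c = \mathcal{H}^0$ on all of $X$.

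For (2), I would take global sections. Axiom (ii) ($H^i(X, \mathcal{H}^0) = 0$ for $i > 0$) together with (1) yields $\gr \Gamma(\mathcal{H}_c) = \Gamma(\mathcal{H}^0) = \C[V]\#\Gamma$ (the last equality by axiom (i)) via the usual Rees-filtration spectral sequence. Hence $\Gamma(\mathcal{H}_c)$ is a filtered quantization of $\C[V]\#\Gamma$, and by the universality of $H_\param$ as a filtered deformation of $\C[V]\#\Gamma$ (parallel to the universality of $\A_\h^W$ recalled in Section \ref{SS_quant}) there exists a unique $\varphi(c) \in \param$ with $\Gamma(\mathcal{H}_c) \cong H_{\varphi(c)}$; passing to the universal version $\mathcal{H}_\param$ forces $\varphi$ to be affine. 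To identify $\varphi$ as the action of a single Weyl element, I would apply the averaging idempotent $e$: axiom (iii) and the construction of $\Pro_c$ give $e \Pro_c \cong \mathcal{D}_c$ as a right $\mathcal{D}_c$-module, whence $e \mathcal{H}_c e = \mathcal{D}_c$ and so $e \Gamma(\mathcal{H}_c) e = \A_\lambda$. Combined with the affine identification $c \leftrightarrow \lambda$ of Section \ref{SS_quant}, this gives $e H_{\varphi(c)} e \cong e H_{c(\lambda)} e$ as filtered quantizations of $\C[V]^\Gamma$, and the universality of $\A_\h^W$ forces $\varphi(c)$ and $c(\lambda)$ to lie in the same $W$-orbit. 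Finiteness of $W$ and affineness of $\varphi$ then force $\varphi(c) = w \cdot c$ for a single $w \in W$, depending only on $\Pro$ (consistent with Remark \ref{Rem:Procesi_classif}).

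The main technical obstacle is the commutation of $\gr$ with $i_*$ across $X \setminus X^{reg}$ in part (1): this relies on careful use of maximal Cohen--Macaulayness to ensure vanishing of $R^1 i_*$ on the Rees deformation, so that passage to the $\hbar=0$ fiber interacts well with the pushforward. A secondary subtlety in part (2) is invoking the classification of filtered quantizations of $\C[V]\#\Gamma$ as SRAs in a universal form; this could alternatively be bypassed by Morita-recovering $\Gamma(\mathcal{H}_c)$ from its spherical subalgebra $\A_\lambda$ together with the projective module $\Gamma(\Pro_c)$, using the already-established fact that $e \Gamma(\mathcal{H}_c) e = \A_\lambda$.
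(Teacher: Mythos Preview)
Your proposal is correct and follows essentially the same route as the paper: for (1), both arguments work with the Rees sheaf, use maximal Cohen--Macaulayness of $\mathcal{H}^0$ together with $\operatorname{codim}_X X^{sing}\geqslant 4$ to get $i_*i^*\mathcal{H}^0=\mathcal{H}^0$ and $R^1i_*\,i^*\mathcal{H}^0=0$, and then commute $\gr$ with $i_*$; for (2), both invoke the universality of $H_\param$ as a filtered deformation of $\C[V]\#\Gamma$ and then pass to spherical subalgebras, using the universality of $(eH_\param e)^W$ to identify the resulting affine reparametrization of $\param$ as an element of the Namikawa--Weyl group. The only cosmetic differences are that the paper works with the universal version $\Pro_\param$ from the outset in (2) rather than pointwise, and does not explicitly identify $e\mathcal{H}_c e$ with $\mathcal{D}_c$ (it simply observes that the induced automorphism of $eH_\param e$ fixing $\C[V]^\Gamma$ must lie in $W$).
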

\begin{proof}
Let us prove (1). Consider the $\hbar$-adically completed Rees sheaf $\Pro_{c,\hbar}$ of $\Pro_c$ so that we have an
exact sequence $0\rightarrow \Pro_{c,\hbar}\xrightarrow{\hbar\cdot}\Pro_{c,\hbar}\rightarrow \Pro\rightarrow 0$.
We need to prove that $\mathcal{E}nd(\Pro_{c,\hbar})/(\hbar)=\mathcal{H}^0$. We know from the
construction that $i^*\mathcal{E}nd(\Pro_{c,\hbar})/(\hbar)= i^*\mathcal{H}^0$. Also we have $i_*i^*\mathcal{H}^0=
\mathcal{H}^0$. From the inclusion $\mathcal{E}nd(\Pro_{c,\hbar})/(\hbar)\hookrightarrow
\mathcal{H}^0$  we conclude that the natural homomorphism $\mathcal{E}nd(\Pro_{c,\hbar})/(\hbar)\rightarrow
i_*i^* \left(\mathcal{E}nd(\Pro_{c,\hbar})/(\hbar)\right)$ is an isomorphism.
It follows that $\mathcal{E}nd(\Pro_{c,\hbar})\rightarrow
i_*i^* \mathcal{E}nd(\Pro_{c,\hbar}))$ is an isomorphism.

What remains  to prove to establish (1) is that
$R^1i_* i^*\mathcal{E}nd(\Pro_{c,\hbar})=0$.  We have $R^1i_* i^*\mathcal{H}^0=0$
because $\mathcal{H}^0$ is maximal Cohen-Macaulay and the codimension of the complement
of $X^{reg}$ is, at least, $4$. Similarly to the proof of
\cite[Lemma 5.6.3]{GL}, it follows that $R^1i_* i^*\mathcal{E}nd(\Pro_{c,\hbar})=0$.

Let us prove (2). Since $\Ext^1(\Pro,\Pro)=0$ (compare to the proof of Lemma \ref{Lem:Procesi_R_extension}),
we see that $\gr \End(\Pro_\param)=\gr H_{\param}$.
The universality property of $H_{\param}$, see \cite[Section 6.1]{quant_iso}, implies that there is a filtered algebra isomorphism
$\End(\Pro_{\param})\xrightarrow{\sim} H_{\param}$ that induces an affine map on $\param$
and gives the identity map on $\C[V]\#\Gamma$.  Passing to spherical subalgebras we get a filtered algebra
automorphism $e H_{\param} e\xrightarrow{\sim} e H_{\param} e$ that induces the identity automorphism
of $\C[V]^\Gamma$. This isomorphism has to be given by an element of the Namikawa-Weyl group because
$(e H_{\param} e)^W$ is a universal filtered quantization of $\C[V]^\Gamma$ as discussed in
Section \ref{SS_quant}.
\end{proof}

\subsection{McKay equivalence}
In this section we will prove the following result. Consider the category
$\Coh(\mathcal{H}^0)$ of coherent $\mathcal{H}^0$-modules
on $X$.

\begin{Prop}\label{Prop:McKay}
Let $\Pro$ be a Procesi bundle on $X$. Then the derived global section functor
$R\Gamma: D^b(\Coh(\mathcal{H}^0))\xrightarrow{\sim} D^b(\C[V]\#\Gamma\operatorname{-mod})$
is a category equivalence.
\end{Prop}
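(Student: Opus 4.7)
The plan is to realize $R\Gamma$ as a derived Morita equivalence, by exhibiting $\mathcal{H}^0\in\Coh(\mathcal{H}^0)$ as a tilting generator with endomorphism algebra $\C[V]\#\Gamma$. Let $L:=\mathcal{H}^0\otimes^L_{\C[V]\#\Gamma}\bullet$ be the left adjoint of $R\Gamma$. I will check, in turn, that $L$ preserves boundedness, that $L$ is fully faithful, and that $L$ is essentially surjective; the last is the main obstacle.

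Boundedness of $L$ on $D^b$ is easy: the category $\C[V]\#\Gamma\operatorname{-mod}$ is equivalent to $\Coh^\Gamma(V)$, which has global dimension $\dim V$ because $V$ is smooth, so every bounded complex of $\C[V]\#\Gamma$-modules has a finite projective resolution.

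The tilting computation reduces to Definition~\ref{defi_Procesi}: part (i) gives $\Gamma(X,\mathcal{H}^0)=\End(\Pro)=\C[V]\#\Gamma$ and part (ii) gives $H^{>0}(X,\mathcal{H}^0)=0$, so $R\Hom_{\Coh(\mathcal{H}^0)}(\mathcal{H}^0,\mathcal{H}^0)=R\Gamma(X,\mathcal{H}^0)=\C[V]\#\Gamma$ concentrated in degree zero. Hence $R\Gamma\circ L(\C[V]\#\Gamma)\cong\C[V]\#\Gamma$ canonically via the unit of adjunction; since $\C[V]\#\Gamma$ is a compact projective generator of $D^b(\C[V]\#\Gamma\operatorname{-mod})$, the unit $\mathrm{id}\to R\Gamma\circ L$ is an isomorphism on all of $D^b(\C[V]\#\Gamma\operatorname{-mod})$, so $L$ is fully faithful.

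The main obstacle is essential surjectivity of $L$: equivalently, $\mathcal{H}^0$ classically generates $D^b(\Coh(\mathcal{H}^0))$, or equivalently, $R\Gamma$ is conservative (apply $R\Gamma$ to the cone of the counit $L R\Gamma\to\mathrm{id}$ and use full faithfulness of $L$). My plan is as follows. Pick an ample line bundle $\mathcal{O}(\chi)$ on $X$, which exists since $\rho\colon X\to Y=V/\Gamma$ is projective with $Y$ affine. Every $\mathcal{F}\in\Coh(\mathcal{H}^0)$ is $\mathcal{O}_X$-coherent, so Serre's theorem produces surjections $(\mathcal{H}^0\otimes_{\mathcal{O}_X}\mathcal{O}(-n\chi))^{\oplus r}\twoheadrightarrow\mathcal{F}$ for $n\gg 0$; iterating and invoking finite global dimension of $\Coh(\mathcal{H}^0)$---which follows from the Morita-equivalence of $\mathcal{H}^0|_{X^{reg}}$ with $\mathcal{O}_{X^{reg}}$, the maximal Cohen--Macaulay property (iv) of Definition~\ref{defi_Procesi}, and $\operatorname{codim}_X(X^{sing})\geq 4$---one obtains a finite left resolution of every $\mathcal{F}$ by such twists. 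The delicate remaining step is to show that each twist $\mathcal{H}^0\otimes\mathcal{O}(-n\chi)$ lies in the thick subcategory $\langle\mathcal{H}^0\rangle$; for this I would build a Beilinson-type resolution of the diagonal in $X\times_Y X$ out of summands of $\Pro\boxtimes\Pro^\vee$, exploiting property (iii) of Definition~\ref{defi_Procesi} and the idempotent decomposition of $\C\Gamma$, in the spirit of \cite[Section 5]{GL}. With that last step in place, $\langle\mathcal{H}^0\rangle=D^b(\Coh(\mathcal{H}^0))$ and $L$ is essentially surjective, completing the proof.
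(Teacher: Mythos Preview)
Your reduction to full faithfulness of $L=\mathcal{H}^0\otimes^L_{\C[V]\#\Gamma}\bullet$ is correct and matches the paper exactly. The gap is in essential surjectivity, and it is a real one.

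First, your claim that $\Coh(\mathcal{H}^0)$ has finite global dimension is not justified by the ingredients you cite. On $X^{reg}$ this is clear from the Morita equivalence with $\mathcal{O}_{X^{reg}}$, but $X$ is singular here (this is the whole point of the paper), and ``maximal Cohen--Macaulay plus $\operatorname{codim}\geq 4$'' does not by itself force the stalks of $\mathcal{H}^0$ over singular points to be regular. In fact, finite global dimension of $\mathcal{H}^0$ is essentially equivalent to $\mathcal{H}^0$ being a noncommutative crepant resolution of $\C[V]\#\Gamma$, which is what the proposition is trying to establish; invoking it here is circular.

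Second, and more seriously, the ``Beilinson-type resolution of the diagonal in $X\times_Y X$'' that would place each twist $\mathcal{H}^0\otimes\mathcal{O}(-n\chi)$ inside $\langle\mathcal{H}^0\rangle$ is asserted but not constructed. This is precisely the hard content; in \cite{BK} and in \cite[Section 5]{GL} such resolutions exist because $X$ is smooth and one already has a genuine tilting bundle. Without it your argument does not close.

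The paper takes a completely different route for essential surjectivity: it proves (Lemma~\ref{Lem:indec_Serre}) that $D^b(\Coh(\mathcal{H}^0))$ is indecomposable and that $\bullet[\dim X]$ is a Serre functor relative to $\C[Y]$, using that $K_X$ is trivial and $\mathcal{H}^0$ is maximal Cohen--Macaulay with $\mathcal{H}^0\cong\mathcal{H}^{0*}$. Then \cite[Lemma~2.7]{BK} applies: a fully faithful functor whose source and target both have Serre functor equal to the same shift, with indecomposable target, is automatically an equivalence. This bypasses any need for finite global dimension or an explicit resolution of the diagonal.
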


Consider the full triangulated subcategory $\mathcal{C}$ of $D^b(\Coh(\mathcal{H}^0))$
generated by objects of the form $\mathcal{H}^0\otimes \mathcal{L}$, where $\mathcal{L}$
is a line bundle on $X$.
Similarly to \cite[Section 2.2]{BK}, a crucial step in the proof of  Proposition
\ref{Prop:McKay} is to prove the following lemma.

\begin{Lem}\label{Lem:indec_Serre}
The following claims are true:
\begin{enumerate}
\item The category $\mathcal{C}$ is indecomposable (as a triangulated category).
\item The functor $\bullet[\dim X]$ is a Serre functor for $\mathcal{C}$
meaning that $$R\Hom_{\C[Y]}\left(R\Hom_{\mathcal{H}^0}(\mathcal{F},\mathcal{G}),\C[Y][\dim Y]\right)\cong
R\Hom_{\mathcal{H}^0}(\mathcal{G}, \mathcal{F}[\dim X]).$$
\end{enumerate}
\end{Lem}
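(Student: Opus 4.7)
For part (1), my plan is to identify decompositions of the triangulated category $D^b(\Coh(\mathcal{H}^0))$ with nontrivial central idempotents of the sheaf of algebras $\mathcal{H}^0$ (equivalently, of the global section ring $\Gamma(X, Z(\mathcal{H}^0))$, the Bernstein center of $\Coh(\mathcal{H}^0)$). The first step is to check that $Z(\mathcal{H}^0) = \mathcal{O}_X$. On $X^{reg}$, where $\Pro$ is locally free by axiom (iv) of Definition \ref{defi_Procesi}, $\mathcal{H}^0|_{X^{reg}} = \mathcal{E}nd(\Pro|_{X^{reg}})$ is Morita equivalent to $\mathcal{O}_{X^{reg}}$, so its center equals $\mathcal{O}_{X^{reg}}$. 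The inclusion $\mathcal{O}_X \hookrightarrow Z(\mathcal{H}^0)$ is thus an isomorphism on $X^{reg}$; since both sides are reflexive (in fact maximal Cohen-Macaulay) coherent $\mathcal{O}_X$-sheaves and $X^{sing}$ has codimension $\geqslant 4$ by terminality, this extends globally. Finally $\Gamma(X,\mathcal{O}_X) = \C[Y]$ by Lemma \ref{Lem:X_properties}(2) is a domain as $Y$ is irreducible, which rules out nontrivial idempotents.

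For part (2), the strategy is to combine Grothendieck--Serre duality for the proper morphism $\rho: X \to Y$ with a local self-duality of $\mathcal{H}^0$. Since $\omega_X \cong \mathcal{O}_X$ by Lemma \ref{Lem:X_properties}(1) and $Y = V/\Gamma$ also has trivial canonical sheaf (as $\Gamma \subset \operatorname{Sp}(V) \subset \operatorname{SL}(V)$), we have $\rho^! \mathcal{O}_Y \cong \mathcal{O}_X$ with the matching shift $\dim X = \dim Y$. Grothendieck duality then yields, for $\mathcal{E} \in D^b(\Coh(X))$,
$$R\Hom_{\C[Y]}\bigl(R\Gamma(X, \mathcal{E}), \C[Y][\dim Y]\bigr) \cong R\Gamma\bigl(X, R\mathcal{H}om_{\mathcal{O}_X}(\mathcal{E}, \mathcal{O}_X)\bigr)[\dim X].$$
Applied to $\mathcal{E} = R\mathcal{H}om_{\mathcal{H}^0}(\mathcal{F}, \mathcal{G})$, the Serre duality statement reduces to the local identity
$$R\mathcal{H}om_{\mathcal{O}_X}\bigl(R\mathcal{H}om_{\mathcal{H}^0}(\mathcal{F},\mathcal{G}), \mathcal{O}_X\bigr) \cong R\mathcal{H}om_{\mathcal{H}^0}(\mathcal{G}, \mathcal{F}).$$
The underlying ingredient I would establish is the self-duality of $\mathcal{H}^0$ as an $\mathcal{O}_X$-bimodule, i.e.\ $\mathcal{H}om_{\mathcal{O}_X}(\mathcal{H}^0, \mathcal{O}_X) \cong \mathcal{H}^0$, via the trace pairing on $\mathcal{E}nd(\Pro|_{X^{reg}})$, extended across $X^{sing}$ by reflexivity of MCM modules over the Gorenstein $\mathcal{O}_X$.

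The main technical obstacle I anticipate is controlling $R\mathcal{H}om_{\mathcal{H}^0}(\mathcal{F},\mathcal{G})$ for general $\mathcal{F}, \mathcal{G}$ at singular points of $X$: the sheaf $\mathcal{H}^0$ need not have finite global homological dimension on all of $X$ (only on $X^{reg}$, where it is Azumaya), so the standard duality argument based on perfect resolutions does not apply directly. I expect to handle this by exploiting the codimension-4 estimate for $X^{sing}$ together with the MCM property of $\mathcal{H}^0$ to reduce the required duality statements on $X$ to their counterparts on $X^{reg}$, where Morita equivalence with $\mathcal{O}_{X^{reg}}$ and smoothness make the duality immediate. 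This mirrors the extension strategy employed for $\Pro$ itself in Lemma \ref{Lem:Procesi_R_extension}.
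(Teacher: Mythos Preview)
Your approach to (2) matches the paper's: reduce the global duality to the local sheaf identity via Grothendieck--Serre duality for $\rho$ (using $K_X\cong\mathcal{O}_X$), and then establish the $\mathcal{H}^0$-bilinear self-duality $\mathcal{H}^{0*}\cong\mathcal{H}^0$ by trace on $X^{reg}$ and extension by the maximal Cohen--Macaulay property. The difference is in how you propose to pass from this self-duality to the full local identity for arbitrary $\mathcal{F},\mathcal{G}$. Your plan to ``reduce to $X^{reg}$'' is not workable as stated: the complexes $R\mathcal{H}om_{\mathcal{H}^0}(\mathcal{F},\mathcal{G})$ are not reflexive in general, so they are not determined by their restriction to $X^{reg}$, and Morita equivalence there does not propagate back across $X^{sing}$. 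The paper sidesteps this by observing that the objects $\mathcal{H}^0\otimes\mathcal{L}$ (for line bundles $\mathcal{L}$) generate $D^b(\Coh(\mathcal{H}^0))$, and that for $\mathcal{F}=\mathcal{H}^0\otimes\mathcal{L}$, $\mathcal{G}=\mathcal{H}^0\otimes\mathcal{L}'$ both sides of the local identity compute directly (via $R\mathcal{H}om_{\mathcal{H}^0}(\mathcal{H}^0\otimes\mathcal{L},\mathcal{H}^0\otimes\mathcal{L}')\cong\mathcal{H}^0\otimes\mathcal{L}'\otimes\mathcal{L}^{-1}$), reducing everything to the single bimodule isomorphism $\mathcal{H}^{0*}\cong\mathcal{H}^0$. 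Since both sides are triangulated in each variable, a natural isomorphism on generators suffices. This is the missing step in your outline.

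Your approach to (1) is genuinely different from the paper's and is correct. The paper argues directly with the generators $\mathcal{H}^0\epsilon\otimes\mathcal{L}$: it shows each is indecomposable, that there are nonzero morphisms connecting them, and hence that they all lie in a single summand of any orthogonal decomposition. Your route via the Bernstein center is cleaner conceptually: an orthogonal decomposition of $D^b$ restricts to one of the heart $\Coh(\mathcal{H}^0)$, which corresponds to an idempotent in the center of that abelian category; you then identify this center with $\Gamma(Z(\mathcal{H}^0))=\Gamma(\mathcal{O}_X)=\C[Y]$, a domain. (In fact one can shortcut the sheaf computation: the idempotent already lands in $Z(\Gamma(\mathcal{H}^0))=Z(\C[V]\#\Gamma)=\C[V]^\Gamma$ just from naturality with respect to endomorphisms of $\mathcal{H}^0$.) The paper's argument has the advantage of being entirely self-contained, while yours invokes the identification of the categorical center, which is standard but does require a word of justification in the non-affine setting.
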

\begin{proof}
Let us prove (1). Let $\epsilon$ be a primitive idempotent in $\C\Gamma$. Then, for $\mathcal{F}\in \operatorname{Coh}(X)$,
$\mathcal{H}om(\mathcal{H}^0\epsilon\otimes \mathcal{L},\mathcal{F})=\mathcal{L}^*\otimes \epsilon\mathcal{F}$, an equality
of coherent sheaves on $X$. So
$\Hom(\mathcal{H}^0\epsilon\otimes \mathcal{L},\mathcal{F})=\Gamma(\mathcal{L}^*\otimes \epsilon\mathcal{F})$.
This space is nonzero as long as $e\mathcal{F}$ is nonzero and $\mathcal{L}$ is sufficiently anti-ample.

Note that the object $\mathcal{H}^0\epsilon\otimes \mathcal{L}$ is indecomposable. Indeed, it is enough to assume that
$\mathcal{L}$ is trivial. The global sections of $\mathcal{H}^0\epsilon$ is $(\C[V]\#\Gamma)\epsilon$.
This is an indecomposable
$\C[V]\#\Gamma$-module because $\epsilon$ is primitive. So if $\mathcal{H}^0\epsilon$ is
decomposable, then one of its summands
has zero global sections, equivalently push-forward to $Y$.
Since the morphism $X\rightarrow Y$ is an isomorphism generically, that summand
 must have proper support in $X$. The latter is impossible because
$\mathcal{H}^0$ is maximal Cohen-Macaulay. So we see that $\mathcal{H}^0\epsilon\otimes \mathcal{L}$ is indecomposable. If we can decompose $\mathcal{C}$ into the direct sum of two triangulated categories,
$\mathcal{C}_1\oplus \mathcal{C}_2$, then for each $e,\mathcal{L}$ the object $\mathcal{H}^0\epsilon\otimes \mathcal{L}$
lies in one summand.

Now note that $\Hom(\mathcal{H}^0\epsilon, \mathcal{H}^0\epsilon')=\epsilon(\C[V]\#\Gamma)\epsilon'\neq 0$. So
all $\mathcal{H}^0\epsilon$ lie in the same summand, say $\mathcal{C}_1$. It follows that for any
sufficient anti-ample $\mathcal{L}$, we have $\mathcal{H}^0\epsilon\otimes \mathcal{L}\in \mathcal{C}_1$.
So $\mathcal{C}_2=0$ and the proof of (1) is finished.

Let us prove (2). Recall, (1) of Lemma \ref{Lem:X_properties}, that $K_X$ is trivial. As in  \cite[Section 2.1]{BK},
since the Grothendieck-Serre duality commutes with proper direct images,     what we need to check is
\begin{equation}\label{eq:sheaf_dual} R\mathcal{H}om_{\mathcal{O}_X}\left(R\mathcal{H}om_{\mathcal{H}^0}(\mathcal{F},\mathcal{G}),
\mathcal{O}_X[\dim X]\right)\cong R\mathcal{H}om_{\mathcal{H}^0}(\mathcal{G}, \mathcal{F}[\dim X]),
\end{equation}
for $\mathcal{F},\mathcal{G}$ in $\mathcal{C}$.
Since the objects $\mathcal{H}^0\otimes \mathcal{L}$ generate $\mathcal{C}$,
it is enough to establish a natural isomorphism
of the left hand side and the right hand side of (\ref{eq:sheaf_dual})
when $\mathcal{F}=\mathcal{H}^0\otimes \mathcal{L}, \mathcal{G}=\mathcal{H}^0\otimes \mathcal{L}'$.
In particular, the isomorphism has to be bilinear with respect to $\End_{\mathcal{H}^0}(\mathcal{H}^0)=\End(\Pro)$.

The left hand side becomes
\begin{equation}\label{eq:sheaf1}
\begin{split}
&R\mathcal{H}om_{\mathcal{O}_X}(\mathcal{H}^0\otimes (\mathcal{L}\otimes \mathcal{L}'^*),\mathcal{O}_X[\dim X])=\\
&R\mathcal{H}om_{\mathcal{O}_X}(\mathcal{L}\otimes \mathcal{L}'^*,R\mathcal{H}om_{\mathcal{O}_X}(\mathcal{H}^0,\mathcal{O}_X)[\dim X]),
\end{split}
\end{equation}
while the right hand side is
\begin{equation}\label{eq:sheaf2}
\begin{split}
&R\mathcal{H}om_{\mathcal{H}^0}(\mathcal{H}^0\otimes (\mathcal{L}\otimes\mathcal{L}'^*),\mathcal{H}^0[\dim X])=\\&R\mathcal{H}om_{\mathcal{O}_X}(\mathcal{L}\otimes\mathcal{L}'^*,\mathcal{H}^0[\dim X]),
\end{split}
\end{equation}
So we need to establish an $\mathcal{H}^0$-bilinear isomorphism
\begin{equation}\label{eq:bilin_iso}
R\mathcal{H}om_{\mathcal{O}_X}(\mathcal{H}^0,\mathcal{O}_X)\cong \mathcal{H}^0.\end{equation}
Recall that $\mathcal{H}^0$ is a maximal Cohen-Macaulay module and so the left hand side of
(\ref{eq:bilin_iso}) is concentrated in homological degree $0$ and is a maximal Cohen-Macaulay module. So we just need
to establish a bilinear isomorphism $\mathcal{H}^0\cong \mathcal{H}^{0*}$. Since both side
are maximal Cohen-Macaulay modules, it is enough to establish an isomorphism on $X^{reg}$. But $\Pro|_{X^{reg}}$
is a vector bundle and $\mathcal{H}^0|_{X^{reg}}\cong \Pro|_{X^{reg}}\otimes
\left(\Pro|_{X^{reg}}\right)^*$. This implies $\mathcal{H}^0\cong \mathcal{H}^{0*}$
and finishes the proof.
\end{proof}

\begin{proof}[Proof of Proposition \ref{Prop:McKay}]
The proof closely follows that of \cite[Proposition 2.2]{BK}. Namely, we have the left adjoint functor
$\mathcal{H}^0\otimes^L_{\C[V]\#\Gamma}\bullet$ of $R\Gamma: D^b(\Coh(\mathcal{H}^0))
\rightarrow D^b(\C[V]\#\Gamma\operatorname{-mod})$. The left adjoint functor
is right inverse to $R\Gamma$ because of
$R\Gamma(\mathcal{H}^0)=\C[V]\#\Gamma$. We need to prove that $\mathcal{H}^0\otimes^L_{\C[V]\#\Gamma}\bullet$
is essentially surjective. Since $\bullet[\dim X]$ is a Serre functor for $\mathcal{C}$ and the category $\mathcal{C}$ is indecomposable, we can apply \cite[Lemma 2.7]{BK} and  conclude that the essential image of
$\mathcal{H}^0\otimes^L_{\C[V]\#\Gamma}\bullet$ is $\mathcal{C}$. But the right orthogonal of
$\mathcal{C}$ in $D^b(\operatorname{Coh}(\mathcal{H}^0))$ is zero. This is because
$$\operatorname{Hom}_{\mathcal{H}^0}(\mathcal{H}^0\otimes \mathcal{L}[-i],\mathcal{F})=H^i(\mathcal{L}^*\otimes \mathcal{F}).$$
If the right hand side vanishes for all $i$ and all sufficiently anti-ample $\mathcal{L}$, then $\mathcal{F}=0$.
Since the right orthogonal of $\mathcal{C}$ is zero, the proof of \cite[Lemma 2.7]{BK} shows that
$\mathcal{C}=D^b(\operatorname{Coh}(\mathcal{H}^0))$.
\end{proof}

\subsection{Proofs of Theorems \ref{Thm:der_equiv1},\ref{Thm:der_equiv2}}
We  prove Theorem \ref{Thm:der_equiv2} and then deduce Theorem \ref{Thm:der_equiv1} from here.
Our first step is the following corollary that follows from Proposition
\ref{Prop:McKay} similarly to what was done in \cite[Section 5.5]{GL}.

\begin{Cor}\label{Cor:quant_McKay}
The derived global section functor
$R\Gamma: D^b(\Coh(\mathcal{H}_c))\xrightarrow{\sim} D^b(H_{w(c)}\operatorname{-mod})$
is a category equivalence for all $c$.
\end{Cor}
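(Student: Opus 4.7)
The plan is to deduce the quantum McKay equivalence by deforming the classical one (Proposition~\ref{Prop:McKay}). By Proposition~\ref{Prop:quant_Procesi_endom}(2) we have $\Gamma(\mathcal{H}_c) = H_{w(c)}$. Combined with Proposition~\ref{Prop:quant_Procesi_endom}(1), which gives $\gr \mathcal{H}_c = \mathcal{H}^0$, and the vanishing $H^i(X, \mathcal{H}^0) = 0$ for $i > 0$ (axiom (ii) of a Procesi sheaf), a standard spectral sequence associated to a good filtration yields $R^i\Gamma(\mathcal{H}_c) = 0$ for $i > 0$. Hence $R\Gamma \colon D^b(\Coh(\mathcal{H}_c)) \to D^b(H_{w(c)}\operatorname{-mod})$ is well defined, and it has a left adjoint $L_c := \mathcal{H}_c \otimes^L_{H_{w(c)}} \bullet$ with $R\Gamma \circ L_c(H_{w(c)}) = H_{w(c)}$.

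The next step is to verify that the unit $M \to R\Gamma \circ L_c(M)$ and counit $L_c \circ R\Gamma(\mathcal{F}) \to \mathcal{F}$ of this adjunction are quasi-isomorphisms. First I would pass to the Rees picture: set $\mathcal{H}_{c,\hbar}$ and $H_{w(c),\hbar}$ for the $\hbar$-adic Rees completions, both flat over $\C[[\hbar]]$, with $\hbar = 0$ specializations $\mathcal{H}^0$ and $\C[V]\#\Gamma$ respectively. The adjoint pair $(L_c, R\Gamma)$ lifts to a pair between $D^b(\Coh(\mathcal{H}_{c,\hbar}))$ and $D^b(H_{w(c),\hbar}\operatorname{-mod})$; reducing modulo $\hbar$ recovers precisely the adjunction whose unit and counit are isomorphisms by Proposition~\ref{Prop:McKay}.

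To conclude I would pick generators of both derived categories compatible with the filtrations: on the sheaf side, objects $\mathcal{H}_c \otimes \mathcal{L}$ for ample/anti-ample line bundles $\mathcal{L}$ as used in Lemma~\ref{Lem:indec_Serre}, and similarly on the module side. For a generator $\mathcal{F}$ choose a good filtration; the cone of the counit carries an induced filtration, and its associated graded is the cone of the classical counit, hence acyclic by Proposition~\ref{Prop:McKay}. A filtered Nakayama/complete-filtration argument (as in \cite[Section~5.5]{GL}) then forces the cone itself to vanish. The same argument applied to finitely generated $H_{w(c)}$-modules handles the unit.

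The principal technical obstacle is ensuring that the filtered resolutions used to compute $L_c$ and $R\Gamma$ have associated graded computing the corresponding classical derived functors, i.e.\ that the Rees/$\hbar$-adic picture is compatible with derived tensor product and with the global sections functor on coherent $\mathcal{H}_c$-modules. Once this compatibility is in place (for which boundedness of the filtrations, coherence of the sheaves, and the CM property of $\mathcal{H}^0$ are the key inputs, exactly as in \cite[Section~5.5]{GL}), the reduction to Proposition~\ref{Prop:McKay} is straightforward.
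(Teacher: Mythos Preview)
Your proposal is correct and follows essentially the same approach as the paper: the paper simply states that the corollary ``follows from Proposition~\ref{Prop:McKay} similarly to what was done in \cite[Section~5.5]{GL}'', and your outline is precisely an unpacking of that filtered/Rees deformation argument reducing the quantum statement to the classical McKay equivalence.
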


Theorem \ref{Thm:der_equiv2} is now a consequence of the following proposition.

\begin{Prop}\label{Prop:spher_equiv_local}
The functor $M\mapsto eM: \operatorname{Coh}(\mathcal{H}_c)\rightarrow \operatorname{Coh}(\mathcal{D}_c)$
is a category equivalence.
\end{Prop}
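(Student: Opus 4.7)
The plan is to exhibit a two-sided inverse $F : \Coh(\mathcal{D}_c) \to \Coh(\mathcal{H}_c)$ with $F(N) = \mathcal{P}_c \otimes_{\mathcal{D}_c} N$, using that $\mathcal{P}_c = \mathcal{H}_c e$ is an $(\mathcal{H}_c, \mathcal{D}_c)$-bimodule. Then $F$ is left adjoint to $G = e\cdot(-)$, with unit $\eta_N : N \to GF(N) = e\mathcal{P}_c \otimes_{\mathcal{D}_c} N$. I would first identify $e\mathcal{P}_c$ with $\mathcal{D}_c$, making $\eta$ an isomorphism: $e\mathcal{P}_c = e\mathcal{H}_c e$ is, by Proposition \ref{Prop:quant_Procesi_endom}(1) together with Definition \ref{defi_Procesi}(iii), a filtered quantization of $e\mathcal{H}^0 e = \Pro^\Gamma = \mathcal{O}_X$ that agrees with $\mathcal{D}_c$ on the smooth locus (via the classical Morita picture for the vector bundle $\Pro|_{X^{reg}}$), and the maximal Cohen--Macaulay reflexivity across the codimension-$\geq 4$ singular locus promotes this to a global isomorphism. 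Consequently $F$ is fully faithful and $G$ is essentially surjective.

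It then suffices to show the counit $\epsilon_M : FG(M) \to M$ is an isomorphism for every $M \in \Coh(\mathcal{H}_c)$. Since $G$ is exact and $GF \cong \operatorname{id}$, its kernel and cokernel are both annihilated by $e$, and so the problem reduces to showing that any $M \in \Coh(\mathcal{H}_c)$ with $eM = 0$ vanishes. Applying this to the universal test case $M = \mathcal{H}_c/\mathcal{H}_c e \mathcal{H}_c$ (for which $eM = 0$ follows from the trivial inclusion $e\mathcal{H}_c \subseteq \mathcal{H}_c e \mathcal{H}_c$), the proposition is in fact equivalent to the sheaf identity $\mathcal{H}_c e \mathcal{H}_c = \mathcal{H}_c$ on $X$.

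On $X^{reg}$ this identity holds because $\Pro|_{X^{reg}}$ is a vector bundle on which $e$ is a full-rank progenerator (classical Morita). The extension to all of $X$ uses Theorem \ref{Thm:simplicity} (simplicity of $\mathcal{D}_c$): the ideal $e\mathcal{H}_c e\mathcal{H}_c e \subseteq \mathcal{D}_c$ is nonzero (it contains $e = 1_{\mathcal{D}_c}$), so by simplicity equals all of $\mathcal{D}_c$. This identity says that the multiplication $\mathcal{P}_c \otimes_{\mathcal{D}_c} e\mathcal{H}_c \to \mathcal{H}_c$ is surjective after cutting down by $e(-)e$; the main obstacle is propagating this surjectivity from the corner $e\mathcal{H}_c e$ to the whole of $\mathcal{H}_c$ in the presence of singularities of $X$. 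I expect this last step to use an $i_*i^*$ argument combined with the maximal Cohen--Macaulay property of $\mathcal{H}^0 = \operatorname{gr}\mathcal{H}_c$ (Definition \ref{defi_Procesi}(iv)) and the higher-cohomology vanishing (Definition \ref{defi_Procesi}(ii)), and it is where the arguments of \cite[Section 5]{GL} for the smooth wreath-product case require genuine modification.
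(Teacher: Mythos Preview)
Your reduction to the sheaf identity $\mathcal{H}_c e \mathcal{H}_c = \mathcal{H}_c$ is correct, and so is the observation that this holds on $X^{reg}$ by ordinary Morita theory for the vector bundle $\Pro|_{X^{reg}}$. But the extension to all of $X$ is a genuine gap, and the tools you invoke do not close it. Your appeal to Theorem~\ref{Thm:simplicity} is vacuous as stated: the ideal $e\mathcal{H}_c e\mathcal{H}_c e$ already contains $e=1_{\mathcal{D}_c}$, so it equals $\mathcal{D}_c$ trivially, with no simplicity needed; and as you yourself note, this says nothing about the off-diagonal pieces of $\mathcal{H}_c$. The $i_*i^*$ argument you gesture at does show $\mathcal{H}_c = i_*i^*\mathcal{H}_c$ (from $\operatorname{gr}\mathcal{H}_c=\mathcal{H}^0$ being maximal Cohen--Macaulay), but there is no reason the sub-ideal $\mathcal{H}_c e\mathcal{H}_c$ should itself satisfy $\mathcal{H}_c e\mathcal{H}_c = i_*i^*(\mathcal{H}_c e\mathcal{H}_c)$; equivalently, you would need to rule out coherent $\mathcal{H}_c$-modules supported on $X^{sing}$, i.e.\ an analog of Corollary~\ref{Cor:Conj_equiv_stat} for $\mathcal{H}_c$ rather than $\mathcal{D}_c$. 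That analog is not available: the quotient $\mathcal{H}_c/\mathcal{H}_c e\mathcal{H}_c$ is killed by $e$ on both sides, so it is invisible to $\mathcal{D}_c$, and the machinery of Proposition~\ref{Prop:max_ideal} is set up for $\mathcal{D}$-modules. Even at the associated graded level the question whether $\mathcal{H}^0 e\mathcal{H}^0=\mathcal{H}^0$ over the singular locus is unclear, since $\Pro$ is not locally free there.

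The paper avoids this difficulty entirely by a translation argument. One first treats the favorable case where abelian localization holds for $(X,c)$ and $w(c)$ is spherical: then the global-section functors intertwine multiplication by $e$, and combining the known equivalences $\Gamma^{\mathcal{D}}$ and $N\mapsto eN$ on $H_{w(c)}\operatorname{-mod}$ with Corollary~\ref{Cor:quant_McKay} forces $\Gamma^{\mathcal{H}}$ to be an abelian equivalence, hence $M\mapsto eM$ is one. For arbitrary $c$, one constructs $\mathcal{H}_{c+\chi}$-$\mathcal{H}_c$-bimodules $\mathcal{E}_{c,\chi}$ (quantizing $\mathcal{H}^0\otimes\mathcal{O}(\chi)$) that are mutually inverse Morita equivalences, and chooses $\chi\in\operatorname{Pic}(X)$ so that $c+\chi$ lands in the favorable range; the equivalence then transports back along $\mathcal{E}_{c+\chi,-\chi}$. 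This sidesteps the local ``sphericity'' question on the singular locus altogether.
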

\begin{proof}
Suppose that
\begin{enumerate}
\item
abelian localization holds for $(X,c)$
\item and  the parameter $w(c)$ (where $w\in W_Y$
is the element defined by $\Pro$, see (2) of Proposition \ref{Prop:quant_Procesi_endom}) is spherical.
\end{enumerate}
Note that the global section functors $\Gamma^{\mathcal{H}}: \operatorname{Coh}(\mathcal{H}_c)\rightarrow H_{w(c)}\operatorname{-mod}$ and $\Gamma^{\mathcal{D}}: \operatorname{Coh}(\mathcal{D}_c)\rightarrow eH_{w(c)}e\operatorname{-mod}$ satisfy $e\Gamma^{\mathcal{H}}(\bullet)\cong
\Gamma^{\mathcal{D}}(e\bullet)$. The functors $\Gamma^{\mathcal{D}}$ and $N\mapsto eN:
H_{w(c)}\operatorname{-mod}\rightarrow eH_{w(c)}e\operatorname{-mod}$ are category equivalences.
We conclude that the functor $\Gamma^{\mathcal{H}}$ is exact. By Corollary \ref{Cor:quant_McKay},
the functor $R\Gamma^{\mathcal{H}}$ is an equivalence, hence $\Gamma^{\mathcal{H}}$ is an equivalence.
It follows that the functor $M\mapsto eM: \operatorname{Coh}(\mathcal{H}_c)\rightarrow \operatorname{Coh}(\mathcal{D}_c)$
is an equivalence under assumptions (1),(2).

Now let us prove the claim of the  proposition without restrictions on $c$.
Recall that, for $\chi\in \operatorname{Pic}(X)$,  we have the $\mathcal{D}_{c+\chi}$-$\mathcal{D}_c$-bimodule $\mathcal{D}_{c,\chi}$ quantizing the line bundle $\mathcal{O}(\chi)$ and also the
$\mathcal{D}_{c}$-$\mathcal{D}_{c+\chi}$-bimodule $\mathcal{D}_{c+\chi,-\chi}$.
Tensor products with these bimodules define mutually quasi-inverse equivalences
between $\Coh(\mathcal{D}_c)$ and $\Coh(\mathcal{D}_{c+\chi})$.
In particular, we have natural isomorphisms
$$\mathcal{H}_c\cong
\mathcal{E}nd_{\mathcal{D}_{c+\chi}^{opp}}(\Pro_c\otimes_{\mathcal{D}_c}\mathcal{D}_{c+\chi,-\chi}),
\mathcal{H}_{c+\chi}\cong
\mathcal{E}nd_{\mathcal{D}_{c}^{opp}}(\Pro_{c+\chi}\otimes_{\mathcal{D}_{c+\chi}}\mathcal{D}_{c,\chi}).$$
So we can consider $\mathcal{H}_{c+\chi}$-$\mathcal{H}_c$-bimodule $\mathcal{E}_{c,\chi}:=\mathcal{H}om_{\mathcal{D}_{c+\chi}^{opp}}(\Pro_{c}\otimes_{\mathcal{D}_c}\mathcal{D}_{c+\chi,-\chi},
\Pro_{c+\chi})$ and the $\mathcal{E}nd(\Pro_{c})$-$\mathcal{E}nd(\Pro_{c+\chi})$-bimodule
$\mathcal{E}_{c+\chi,-\chi}:=
\mathcal{H}om_{\mathcal{D}_c^{opp}}(\Pro_{c+\chi}\otimes_{\mathcal{D}_{c+\chi}}\mathcal{D}_{c,\chi},\Pro_{c})$.

We claim that the bimodules $\mathcal{E}_{c,\chi}$ and $\mathcal{E}_{c+\chi,-\chi}$ are mutually inverse
Morita equivalences. Indeed, these bimodules come with natural filtrations and
$\gr\mathcal{E}_{c,\chi}\cong \mathcal{H}^0\otimes \mathcal{O}(\chi)$ and
$\gr \mathcal{E}_{c+\chi,-\chi}\cong \mathcal{H}^0\otimes
\mathcal{O}(-\chi)$. The latter $\mathcal{H}^0$-bimodules are locally trivial.
Tensoring over $\mathcal{H}^0$ with these associated graded bimodules
give autoequivalences of the category $\Coh(\mathcal{H}^0)$. Since the associated graded
bimodules $\gr\mathcal{E}_{c,\chi}, \gr \mathcal{E}_{c+\chi,-\chi}$
are mutually inverse Morita equivalence bimodules, so are the bimodules $\mathcal{E}_{c,\chi}, \mathcal{E}_{c+\chi,-\chi}$.

For any $c$, we can choose $\chi$ so that abelian localization holds for $(X,c+\chi)$. Note that
$\Pro_{c+\chi}\otimes_{\mathcal{D}_{c+\chi}}\bullet$ is an equivalence $\operatorname{Coh}(\mathcal{D}_c)\xrightarrow{\sim}
\operatorname{Coh}(\mathcal{H}_c)$ inverse to $M\mapsto eM$. Note also that
$$e(\mathcal{E}_{c+\chi,-\chi}\otimes_{\mathcal{H}_{c+\chi}}
\mathcal{P}_{c+\chi}\otimes_{\mathcal{D}_{c+\chi}}\bullet)\cong \mathcal{D}_{c+\chi,-\chi}
\otimes_{\mathcal{D}_{c+\chi}}\bullet.$$
It follows that $M\mapsto eM: \operatorname{Coh}(\mathcal{H}_c)\rightarrow \operatorname{Coh}(\mathcal{D}_c)$
is a category equivalence.
\end{proof}

\begin{proof}[Proof of Theorem \ref{Thm:der_equiv2}]
Theorem \ref{Thm:der_equiv2} follows now from Propositions \ref{Prop:spher_equiv_local}
and Corollary \ref{Cor:quant_McKay}.
\end{proof}

\begin{proof}[Proof of Theorem \ref{Thm:der_equiv1}]
Thanks to Theorem \ref{Thm:der_equiv2}, it is enough to show that $\operatorname{Coh}(\mathcal{D}_c)
\xrightarrow{\sim} \operatorname{Coh}(\mathcal{D}_{c+\chi})$ for any $\chi\in \operatorname{Pic}(X^{reg})$
(so far, we know an equivalence for $\chi\in \operatorname{Pic}(X)$). Let $\mathcal{O}^{reg}(\chi)$ be a line
bundle on $X^{reg}$ corresponding to $\chi$. Since  $H^i(X^{reg},\mathcal{O}_{X^{reg}})=0$
for $i=1,2$, the bundle $\mathcal{O}^{reg}(\chi)$ admits a unique quantization to a $\mathcal{D}^{reg}_{c+\chi}$-$\mathcal{D}^{reg}_{c}$-bimodule that we denote by $\mathcal{D}^{reg}_{c,\chi}$.
Set $\mathcal{D}_{c,\chi}:=i_*\mathcal{D}^{reg}_{c,\chi}$, where $i$ denotes the inclusion of
$X^{reg}$ into $X$. We claim that $\mathcal{D}_{c,\chi}, \mathcal{D}_{c+\chi,-\chi}$ are mutually inverse
Morita equivalence bimodules. Indeed, we have natural bimodule homomorphisms
$$\mathcal{D}_{c,\chi}\otimes_{\mathcal{D}_{c}}\mathcal{D}_{c+\chi,-\chi}\rightarrow
\mathcal{D}_{c+\chi}, \mathcal{D}_{c+\chi,-\chi}\otimes_{\mathcal{D}_{c+\chi}}\mathcal{D}_{c,\chi}\rightarrow
\mathcal{D}_{c}.$$
They becomes an isomorphism after restriction to $X^{reg}$. So their kernels and cokernels are supported on $X^{sing}$.
By Corollary \ref{Cor:Conj_equiv_stat}, they are zero.
\end{proof}

\section{Applications}
Let us discuss some applications of Theorems \ref{Thm:simplicity}, \ref{Thm:der_equiv2}.
\subsection{Generalized Bernstein inequality}
Let $Y$ be a conical symplectic singularity.
Suppose that $\mathcal{D}_\lambda\widehat{\otimes} \mathcal{D}_\mu$ is simple for all $\lambda,\mu$
(that is true for $Y=V/\Gamma$ because $Y\times Y=(V\oplus V)/(\Gamma\times \Gamma)$ is again
a symplectic quotient singularity).

\begin{Prop}\label{Prop:HC_fin}
Any HC $\A_\lambda$-bimodule has finite length.
\end{Prop}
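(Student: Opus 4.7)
The plan is to induct on $d:=\dim\operatorname{Supp}_Y(\gr\B)$. By condition (ii) in the definition of HC bimodule, $\operatorname{Supp}(\gr\B)$ is a closed Poisson subvariety of $Y$, hence a finite union of symplectic leaf closures, and $d$ is independent of the choice of good filtration. The base case $d=0$ is immediate: $\C^\times$-equivariance of the support forces $\operatorname{Supp}(\gr\B)=\{0\}$, so that $\gr\B$, and therefore $\B$ itself, is a finite-dimensional $\C$-vector space.

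For the inductive step, suppose $d>0$ and the conclusion holds for HC bimodules (over any conical symplectic singularity satisfying the same simplicity hypothesis) whose support has dimension smaller than $d$. Let $\mathcal{L}_1,\ldots,\mathcal{L}_k$ be the open symplectic leaves of $\operatorname{Supp}(\gr\B)$ and let $T\subset\HC(\A_\lambda)$ be the Serre subcategory of HC bimodules with support of dimension $<d$. By induction, every subquotient of $\B$ lying in $T$ has finite length, so it suffices to bound the length of the image $\overline{\B}\in\HC(\A_\lambda)/T$. For each $i$, choose a generic $y_i\in\mathcal{L}_i$ and form the restriction functor $\bullet_{\dagger,y_i}\colon\HC(\A_\lambda)\to\HC(\underline{\A}_\lambda)$ from Section \ref{SS_HC_quant} (where $\underline{\A}_\lambda$ quantizes the slice $\underline{Y}$ at $y_i$). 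Since $y_i$ lies in a top-dimensional leaf of $\operatorname{Supp}(\gr\B)$, the support of $\gr\B_{\dagger,y_i}$ in $\underline{Y}$ is $\{0\}$, and the base case applied to $\underline{Y}$ gives that $\B_{\dagger,y_i}$ is finite-dimensional. Choosing $y_i$ generically ensures that $\bullet_{\dagger,y_i}$ vanishes on $T$, so it descends to an exact functor on $\HC(\A_\lambda)/T$ with finite-dimensional values.

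The crux of the argument is the faithfulness of the collected restrictions on $\HC(\A_\lambda)/T$: if $\B''\in\HC(\A_\lambda)$ satisfies $\B''_{\dagger,y_i}=0$ for generic $y_i\in\mathcal{L}_i$ and every $i$, then $\B''\in T$. This is the bimodule analog of \cite[Lemma 3.5]{Perv} and constitutes the main obstacle. Its proof adapts the finite-part formalism of Proposition \ref{Prop:max_ideal} and its underlying Lemma \ref{Lem:adj_functor} to coherent modules over the product quantization $\mathcal{D}_\lambda\widehat{\otimes}\mathcal{D}_{-\lambda}$ on $X\times X$: one microlocalizes $\B''$ to such a module, and the assumed simplicity of $\mathcal{D}_\lambda\widehat{\otimes}\mathcal{D}_{-\lambda}$, together with the bimodule version of Corollary \ref{Cor:Conj_equiv_stat}, forces this microlocalization to vanish once the restrictions at every open leaf are zero. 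Here one also invokes Proposition \ref{Lem:leave_fund_group} applied to $Y\times Y$ to guarantee that symplectic leaves in $X\times X$ have finite algebraic fundamental groups. Granted the faithfulness, a strictly descending chain in $\overline{\B}$ induces a strictly descending chain in the finite-dimensional space $\bigoplus_i\B_{\dagger,y_i}$, hence has length at most $\sum_i\dim\B_{\dagger,y_i}<\infty$. This bounds the length of $\overline{\B}$ and completes the induction.
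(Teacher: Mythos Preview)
Your inductive scheme has a genuine gap at the reduction step. You assert that since every subquotient of $\B$ lying in $T$ has finite length (by the inductive hypothesis), it suffices to bound the length of $\overline{\B}$ in $\HC(\A_\lambda)/T$. This implication fails in general: in the category of finitely generated $k[[t]]$-modules, take $T$ to be the torsion (equivalently finite-length) objects. Then $k[[t]]$ is Noetherian, every object of $T$ has finite length, and the image of $k[[t]]$ in the quotient category has length~$1$, yet the chain $k[[t]]\supset (t)\supset (t^2)\supset\cdots$ does not terminate. Nothing in your argument excludes analogous behaviour for HC bimodules. Moreover, the faithfulness you isolate as the ``main obstacle'' is not where the difficulty lies: the statement that $\B''_{\dagger,y}\neq 0$ whenever $y\in\operatorname{Supp}_Y(\gr\B'')$ is the direct analog of \cite[Lemma 3.5]{Perv} already recorded in Section~\ref{SS_HC_quant}, and it neither requires nor uses the simplicity of $\mathcal{D}_\lambda\widehat{\otimes}\mathcal{D}_{-\lambda}$. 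So as written, your argument never uses the simplicity hypothesis in an essential way---a red flag, since without it the proposition is not expected to hold.

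The paper proceeds differently, following \cite[Section 4.3]{B_ineq}. One passes to coherent $\mathcal{D}_\lambda\widehat{\otimes}\mathcal{D}_\lambda^{opp}$-modules supported on $X\times_Y X^-$ (with $X^-$ the terminalization for the opposite chamber), a subvariety of dimension $\dim X=\tfrac{1}{2}\dim(X\times X^-)$ by semismallness. Corollary~\ref{Cor:Conj_equiv_stat}, applied to $X\times X^-$ under the simplicity hypothesis, forces the support of \emph{every} nonzero such module to meet $(X\times X^-)^{reg}$; there the support is coisotropic, hence a union of Lagrangian components. The resulting characteristic cycle is additive in short exact sequences and strictly positive on nonzero objects, so it bounds the length outright, with no induction. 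The role of simplicity is precisely to prevent a nonzero subquotient from being supported entirely over the singular locus---exactly the possibility your quotient-by-$T$ reduction fails to exclude.
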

\begin{proof}
The proof repeats that from \cite[Section 4.3]{B_ineq}: note that we can prove the finiteness
of length for $\mathcal{D}_\lambda\widehat{\otimes} \mathcal{D}_\lambda^{opp}$ modules supported on
$X\times_Y X^-$ (where $X^-$ is the terminalization of $X$ corresponding to the cone opposite to
that of $X$) using the characteristic cycle argument because of Corollary
\ref{Cor:Conj_equiv_stat}.
\end{proof}

\begin{Cor}\label{Cor:HC_fin}
For any $c\in \param$, the regular $H_c$-bimodule has finite length.
\end{Cor}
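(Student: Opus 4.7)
The plan is to reduce the corollary to Proposition \ref{Prop:HC_fin} by viewing $H_c$ as a Harish-Chandra bimodule over the spherical subalgebra $\A_\lambda = eH_ce$, where $\lambda\in\h$ corresponds to $c$ via the identification of Section \ref{SS_quant}.

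First, I would equip $H_c$ with its canonical filtration inherited from $T(V)\#\Gamma$, so that $\gr H_c = \C[V]\#\Gamma$. This filtration restricts to the canonical filtration on the subalgebra $\A_\lambda = eH_ce$, under which $\gr\A_\lambda = \C[V]^\Gamma$, and $\gr H_c = \C[V]\#\Gamma$ is finitely generated as a $\C[V]^\Gamma$-bimodule. Using the universal version $H_\param$ of $H_c$, whose spherical subalgebra is $\A_\h$, one sees that all three conditions (i)--(iii) of Section \ref{SS_HC_quant} are met, so $H_c$ is a HC $\A_\lambda$-bimodule in that sense.

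Next, I would observe that any $H_c$-sub-bimodule $M \subset H_c$ is a fortiori an $\A_\lambda$-sub-bimodule; the induced filtration gives $\gr M \subset \gr H_c = \C[V]\#\Gamma$, which is a graded $\C[V]^\Gamma$-submodule of a finitely generated module over the Noetherian ring $\C[V]^\Gamma$, hence finitely generated. So $M$ is itself a HC $\A_\lambda$-bimodule. Consequently, every strictly ascending chain of $H_c$-sub-bimodules of $H_c$ is a strictly ascending chain of HC $\A_\lambda$-sub-bimodules, which must stabilize by Proposition \ref{Prop:HC_fin}.

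The only thing to check outside hypotheses already established is that Proposition \ref{Prop:HC_fin} applies to $Y = V/\Gamma$, i.e., that $\D_\lambda \widehat{\otimes} \D_\mu$ is simple for all $\lambda,\mu \in \h$. This is the content of the parenthetical in the proposition: $Y \times Y = (V \oplus V)/(\Gamma \times \Gamma)$ is again a symplectic quotient singularity with $\Q$-factorial terminalization $X \times X$, so Theorem \ref{Thm:simplicity} supplies the needed simplicity on the product. No substantive obstacle arises beyond this; the deeper work is all encapsulated in Proposition \ref{Prop:HC_fin}, whose proof in turn relies on Corollary \ref{Cor:Conj_equiv_stat}.
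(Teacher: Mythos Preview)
Your reduction has a genuine gap: the inclusion $\A_\lambda = eH_ce \hookrightarrow H_c$ is \emph{not} a unital algebra map, because the unit of $eH_ce$ is $e$, not $1$. Consequently $H_c$ is not a (unital) $\A_\lambda$-bimodule, and the definition of HC bimodule in Section~\ref{SS_HC_quant}---as well as the proof of Proposition~\ref{Prop:HC_fin}, which localizes to coherent $\D_\lambda\widehat{\otimes}\D_\lambda^{opp}$-modules---applies only to unital bimodules. Concretely, the element $e$ acts on $H_c$ as the idempotent $e$, not as the identity, so conditions (i)--(iii) cannot even be formulated for $H_c$ as stated. One might try to repair this by passing from an $H_c$-ideal $I$ to the $\A_\lambda$-ideal $eIe$, but this map is not injective on chains when $c$ is aspherical: for $I_1=H_ceH_c\subsetneq I_2=H_c$ one gets $eI_1e=eI_2e=\A_\lambda$. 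So the argument works only in the spherical case, where $H_c$ and $\A_\lambda$ are Morita equivalent via $H_ce$.

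The paper's proof confronts exactly this issue. It invokes \cite[Remark~4.5]{B_ineq} to reduce the claim to showing that $H_c$ contains a minimal two-sided ideal of finite codimension. For spherical $c$ this follows from Proposition~\ref{Prop:HC_fin} applied to the regular $\A_\lambda$-bimodule together with Morita equivalence, essentially as you suggest. For aspherical $c$, one chooses $\chi\in\param_{\Z}$ so that $c+\chi$ is spherical and abelian localization holds, and then transports the minimal ideal across the derived equivalence using the functor $R\Hom_{H_{c+\chi}}(\Gamma(\mathcal{E}_{c,\chi}),\bullet)$ built from the translation bimodules of Section~\ref{SS_ab_loc} and Proposition~\ref{Prop:spher_equiv_local}. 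The extra input you are missing is precisely this passage to a spherical parameter.
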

\begin{proof}
It was explained in \cite[Remark 4.5]{B_ineq} that the finiteness of length will follow if we show that $H_c$ contains a minimal ideal of finite codimension. In the case, when $c$ is spherical (i.e., $H_ceH_c=H_c$) the existence
of a minimal ideal of finite codimension follows from Proposition \ref{Prop:HC_fin}
applied to the regular $eH_ce$-bimodule. In the general case, we can find $\chi\in \underline{\param}_{\Z}$
such that $c+\chi$ is spherical and abelian localization holds for $c+\chi$ and some choice of $X$.
Then we can use the derived functor $R\Hom_{H_{c+\chi}}(\Gamma(\mathcal{E}_{c,\chi}),\bullet)$ similarly to what was done in \cite[Section 4.3]{B_ineq} to show that $H_c$ has a minimal ideal of finite codimension.
\end{proof}

As was noted in the introduction, Corollary \ref{Cor:HC_fin} implies Theorem
\ref{Thm:B_ineq}. An analog of this theorem also follows for the algebras
$\A_\lambda$, where $Y$ satisfies the additional assumption in the beginning of
this section.

\subsection{Perverse equivalences}\label{SS_Perv}
Here we assume that $Y$ is such that the formal slices to all symplectic leaves are conical. Moreover,
we assume that the quantization $\mathcal{D}_\lambda$ is simple for all $\lambda$ (equivalently, for
a Weil generic $\lambda$). Then we can generalize results of \cite[Section 3.1]{Perv}
to this setting.

Namely, let us choose two chambers $C,C'$ in $\h$ that are opposite with respect to a common
face. Let us pick parameters $\lambda,\lambda'\in \h$
such that
\begin{enumerate}
\item
abelian localization holds for $(\lambda',C')$ and $(\lambda,C)$,
\item $\lambda-\lambda'$ lies in $\h_\Z^{X,X'}$, the intersection of the images of $\operatorname{Pic}(X),\operatorname{Pic}(X')$
in $\h$, where $X,X'$ are the $\Q$-terminalizations corresponding to the chambers $C,C'$.
\end{enumerate}
We can consider
the wall-crossing $\A_{\lambda'}$-$\A_\lambda$-bimodule $\A_{\lambda,\chi}$ defined as
in \cite[Section 6.3]{BPW}.

\begin{Prop}\label{Prop:perv}
After suitably modifying $\lambda,\lambda'$ (and hence $\chi$) by adding elements from
$\h_\Z^{X,X'}$  so that
abelian localization continues to hold for $(\lambda,C),(\lambda',C')$,
the functor $\A_{\lambda,\chi}\otimes^L_{\A_\lambda}\bullet$ becomes a perverse
derived equivalence $D^b(\A_\lambda\operatorname{-mod})\rightarrow D^b(\A_{\lambda'}\operatorname{-mod})$,
where the filtrations by Serre subcategories on $\A_\lambda\operatorname{-mod},
\A_{\lambda'}\operatorname{-mod}$ making these equivalences perverse are introduced
as in \cite[Section 3.1]{Perv}.
\end{Prop}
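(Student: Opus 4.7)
The plan is to adapt the proof of \cite[Theorem 3.1]{Perv} to the present setting; all the necessary ingredients are now in place. Recall that perversity is checked with respect to filtrations by Serre subcategories $0=\mathcal{F}^{>N}\subset \mathcal{F}^N\subset\cdots\subset \mathcal{F}^0=\A_\lambda\operatorname{-mod}$, where $\mathcal{F}^i$ consists of modules $M$ such that the support in $X$ of $\operatorname{Loc}_\lambda M$ (well-defined by abelian localization at $(\lambda,C)$) lies in the union of closures of symplectic leaves of codimension $\geqslant 2i$. By Corollary \ref{Cor:Conj_equiv_stat}, this support is automatically a union of such closures. The analogous filtration on $\A_{\lambda'}\operatorname{-mod}$ uses $X'$. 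Perversity then means that $\A_{\lambda,\chi}\otimes^L_{\A_\lambda}\bullet$ carries $\mathcal{F}^i$ into $D^{\geqslant -i}$ and that the induced functor on $(-i)$-th cohomology yields an equivalence of the associated graded quotients.

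The inputs, all now available in our generality, are: simplicity of $\mathcal{D}_\lambda$ and $\mathcal{D}_{\lambda'}$ (Theorem \ref{Thm:simplicity}); the restriction functors $\bullet_{\dagger,x}$ of Section \ref{SS_HC_quant}, together with the analogs of \cite[Lemmas 3.5--3.7]{Perv}, which send a wall-crossing bimodule over $\A_\h$ to a wall-crossing bimodule over the slice algebra $\underline{\A}_\h$; finite length of $\A_{\lambda,\chi}$ as a Harish-Chandra bimodule (Proposition \ref{Prop:HC_fin}); and the possibility of shifting $\lambda,\lambda'$ by a common element of $\h_\Z^{X,X'}$ while preserving abelian localization at both $(\lambda,C)$ and $(\lambda',C')$, argued as in Proposition \ref{Prop:ab_loc_weak}, and simultaneously arranging the wall-crossing bimodule to be a Morita equivalence on an asymptotically generic open subset.

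With these ingredients, the argument of \cite[Section 3.1]{Perv} proceeds by induction on the codimension of symplectic leaves of $X$. For a leaf $\mathcal{L}$ of codimension $2i$ and $x\in\mathcal{L}$, the restriction $\A_{\lambda,\chi,\dagger,x}$ is a wall-crossing bimodule for the slice singularity $\underline{Y}$. Applying the induction hypothesis to $\underline{Y}$, one controls both the composition factors of $\A_{\lambda,\chi}$ supported on $\overline{\mathcal{L}}$ and the cohomology of $\A_{\lambda,\chi}\otimes^L_{\A_\lambda}M$ for modules $M$ whose support meets $\mathcal{L}$. The base of the induction is the open leaf, where after the common shift the wall-crossing bimodule is a Morita equivalence and so contributes only in cohomological degree $0$.

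The main obstacle is the matching of cohomological degree with leaf codimension: one must prove that composition factors of $\A_{\lambda,\chi}$ supported on a leaf of codimension $2i$ contribute precisely in cohomological degree $-i$ of $\A_{\lambda,\chi}\otimes^L_{\A_\lambda}M$, and nowhere lower. In \cite{Perv} this step relied crucially on simplicity of the relevant quantizations and on the slice formalism; both are now available in full generality via Theorem \ref{Thm:simplicity} and Section \ref{SS_HC_quant}, so the proof of \emph{loc.~cit.} extends with only cosmetic modifications.
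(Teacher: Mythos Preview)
Your proposal is correct and follows essentially the same approach as the paper: both argue that the proof of \cite[Theorem 3.1]{Perv} carries over once simplicity of $\mathcal{D}_\lambda$ (hence Corollary \ref{Cor:Conj_equiv_stat}) and the slice formalism of Section \ref{SS_HC_quant} are in hand. The paper's own proof is even terser and pinpoints the single place where the argument of \cite{Perv} would otherwise break down, namely the step treating $\lambda$ Weil generic in an affine subspace with associated vector space $\operatorname{Span}_{\C}(C\cap C')$, which in the smooth case was automatic but here requires Corollary \ref{Cor:Conj_equiv_stat}; your more diffuse attribution of the difficulty to ``matching cohomological degree with leaf codimension'' is compatible with this but less precise about where exactly the new input enters.
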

\begin{proof}
The proof basically repeats that of \cite[Theorem 3.1]{Perv}. The case when $\lambda$ is Weil generic
in an affine subspace with associated vector space $\operatorname{Span}_{\C}(C\cap C')$ is handled
using Corollary \ref{Cor:Conj_equiv_stat} (that in the setting of \cite{Perv}, where the case of smooth
$X$ is considered, is straightforward). The rest of the proof is the same as that of
\cite[Theorem 3.1]{Perv}.
\end{proof}


\subsection{Abelian localization and simplicity}\label{SS_ab_simpl}
Also let us mention two results that are proved as in \cite[Section 8.4]{BL} using the case
of $C'=-C$ in Proposition \ref{Prop:perv}. In that case, the filtration on $\A_\lambda\operatorname{-mod}$
is defined by $$\A_\lambda\operatorname{-mod}_i:=\{M\in \A_\lambda\operatorname{-mod}| \dim \VA(\A_\lambda/\operatorname{Ann}_{\A_\lambda}(M))\leqslant \dim X-2i\}.$$ We still keep the assumptions
of Section \ref{SS_Perv}.

\begin{Prop}\label{Prop:gen_simplicity}
There is a finite collection of hyperplanes in $\h$, each parallel to one of the walls in
$\h^{sing}$ with the following property: if $\lambda+\h_{\Z}$ does not intersect
the union of these hyperplanes, then $\A_\lambda$ is simple.
\end{Prop}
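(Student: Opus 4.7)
The strategy is to use the long wall-crossing bimodule (the $C'=-C$ case of Proposition \ref{Prop:perv}) to reduce simplicity of $\A_\lambda$ to simplicity of the sheaf algebra $\mathcal{D}_{\lambda+\chi}$ at a shifted parameter where abelian localization holds, and to identify the locus where this wall-crossing bimodule fails to be a genuine Morita equivalence.

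Fix a chamber $C$. By Proposition \ref{Prop:ab_loc_weak} applied to the chamber $-C$, there exists $\chi\in\h_\Z$ such that abelian localization holds for $(\lambda+\chi,-C)$. Under abelian localization, the two-sided ideals of $\A_{\lambda+\chi}$ correspond bijectively to coherent sheaf ideals of $\mathcal{D}_{\lambda+\chi}$, and the latter is simple by the standing assumption of Section \ref{SS_Perv}. Hence if the long wall-crossing bimodule $\A_{\lambda,\chi}$ is a genuine Morita equivalence, rather than only the perverse derived equivalence supplied by Proposition \ref{Prop:perv}, then $\A_\lambda$ is Morita equivalent to $\A_{\lambda+\chi}$, which is simple, forcing $\A_\lambda$ to be simple. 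It therefore suffices to describe the locus of $\lambda$ for which $\A_{\lambda,\chi}$ is genuinely Morita.

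The perverse equivalence of Proposition \ref{Prop:perv} has filtrations by dimension of support, and $\A_{\lambda,\chi}$ is Morita exactly when all of the perverse shifts vanish. Using the restriction functors $\bullet_{\dagger,x}$ of Section \ref{SS_HC_quant} applied at points of each codimension $2$ symplectic leaf $\Leaf_1,\ldots,\Leaf_k\subset Y$, the triviality of the perverse shifts is detected one leaf at a time; contributions from a higher codimension leaf lie in a deeper piece of the Serre filtration and are determined by the codimension $2$ data via analogs of \cite[Lemma 3.7]{Perv}. At each $\Leaf_i$ the restricted bimodule is the wall-crossing bimodule for the Kleinian quantization $\underline{\A}_\lambda$ of the transverse slice $\underline{Y}_i$. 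For Kleinian singularities of ADE type this bimodule is classically known to be Morita unless the parameter lies on a specific countable union of hyperplanes in $\tilde{\h}_i$ invariant under the coweight lattice; taking $\pi_1(\Leaf_i)$-invariants yields an $\h_{i,\Z}$-invariant hyperplane arrangement in $\h_i$, and pulling back to $\h$ gives an $\h_\Z$-invariant union of hyperplanes in $\h$ parallel to the $\Leaf_i$-wall of $\h^{sing}$. Assembling the contributions over the finitely many codimension $2$ leaves and reducing modulo $\h_\Z$ produces the required finite collection $H_1,\ldots,H_N$.

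The main obstacle is localizing the failure of t-exactness to the codimension $2$ slices, i.e., running the characteristic cycle argument of \cite[Section 8.4]{BL} in our singular setting. The smoothness of $X$ used in \emph{loc.\ cit.} is replaced by Corollary \ref{Cor:Conj_equiv_stat} (supports of coherent $\mathcal{D}_\lambda$-modules always meet $X^{reg}$), together with the restriction functor formalism of Section \ref{SS_HC_quant}, reducing the perverse analysis at each slice to the Kleinian case where the bad hyperplanes are explicit.
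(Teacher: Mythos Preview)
Your overall strategy matches the paper's: both invoke the long wall-crossing (the $C'=-C$ case of Proposition~\ref{Prop:perv}) and defer the details to \cite[Section~8.4]{BL}, with Corollary~\ref{Cor:Conj_equiv_stat} replacing the smoothness hypothesis used there. In that sense your proposal is correct in spirit.

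There is, however, a logical gap in how you invoke Proposition~\ref{Prop:perv}. That proposition requires abelian localization to hold for \emph{both} $(\lambda,C)$ and $(\lambda+\chi,-C)$; you only arrange the latter. Without abelian localization at $(\lambda,C)$ you cannot assert that $\A_{\lambda,\chi}\otimes^L_{\A_\lambda}\bullet$ is even a derived equivalence, let alone analyse its perversity. In \cite[Section~8.4]{BL} this is handled by proving the analogue of Proposition~\ref{Prop:gen_simplicity} together with the strong abelian localization statement (Proposition~\ref{Prop:ab_loc_strong} here) in a single induction on the dimension of $Y$: one first shifts $\lambda$ inside its $\h_\Z$-coset so that abelian localization holds for $(\lambda,C)$ as well, applies the long wall-crossing there, and then uses the inductive hypothesis on slice algebras to control the hyperplanes. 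Your write-up needs either this simultaneous induction or an independent argument that abelian localization at $(\lambda,C)$ already holds off the hyperplane locus.

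A second, smaller imprecision: your claim that ``contributions from a higher codimension leaf \ldots are determined by the codimension~2 data'' is not a direct reduction but an induction on $\dim\underline{Y}$. The restriction $\bullet_{\dagger,x}$ at a leaf $\Leaf$ of arbitrary codimension produces a HC bimodule over the slice algebra $\underline{\A}_\lambda$, whose bad locus is controlled, by the inductive hypothesis, by hyperplanes parallel to the walls of $\underline{\h}^{sing}$; these pull back to hyperplanes parallel to walls of $\h^{sing}$ because codimension-$2$ leaves of $\underline{Y}$ correspond to codimension-$2$ leaves of $Y$ containing $\Leaf$ in their closure. The base of the induction is indeed the Kleinian case, but the passage through intermediate leaves should be made explicit.
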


The same holds for the algebras $H_c$, this strengthens \cite[Theorem 4.2.1]{sraco}.
We also have a stronger version of the abelian localization theorem,
see Section \ref{SS_ab_loc}.

\begin{Prop}\label{Prop:ab_loc_strong}
Let $\lambda\in \param$. Then there is $\lambda_0\in \lambda+\h_{\Z}$
with the following property: for any $\lambda_1\in \lambda_0+(C\cap \h_{\Z})$ the functor $\Gamma_{\lambda_1}$
is a category equivalence.
\end{Prop}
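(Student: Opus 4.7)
The plan is to follow the strategy of \cite[Section 8.4]{BL}, combining Proposition \ref{Prop:ab_loc_weak}, the generic simplicity from Proposition \ref{Prop:gen_simplicity}, and the perverse equivalence of Proposition \ref{Prop:perv} (applied with $C' = -C$) to propagate abelian localization from a single starting point throughout the shifted integral cone.

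First, by applying Proposition \ref{Prop:ab_loc_weak} to some ample $\chi_0 \in C \cap \h_{\Z}$, I pick $n_0 \geqslant 0$ such that abelian localization holds for $(X, \lambda + n_0\chi_0)$, and set $\lambda_0 := \lambda + n_0\chi_0$. For arbitrary $\lambda_1 \in \lambda_0 + (C \cap \h_{\Z})$ with $\mu := \lambda_1 - \lambda_0$, the sheaf-level translation bimodule gives an equivalence $\mathcal{D}_{\lambda_0,\mu}\otimes_{\mathcal{D}_{\lambda_0}}\bullet : \operatorname{Coh}(\mathcal{D}_{\lambda_0}) \xrightarrow{\sim} \operatorname{Coh}(\mathcal{D}_{\lambda_1})$, so since abelian localization holds at $\lambda_0$, it suffices to show that the algebra-level translation $\A_{\lambda_0,\mu} = \Gamma(\mathcal{D}_{\lambda_0,\mu})$ is a Morita equivalence between $\A_{\lambda_0}$ and $\A_{\lambda_1}$: then the global-section functor $\Gamma_{\lambda_1}$ will intertwine the sheaf-level equivalence with an algebra-level one, so $\Gamma_{\lambda_1}$ will itself be a category equivalence.

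As in the proof of Proposition \ref{Prop:ab_loc_weak} (via \cite[Proposition 4.5(2)]{BL}), the locus of $\mu \in \h$ for which $\A_{\lambda_0,\mu}$ and $\A_{\lambda_1,-\mu}$ are mutually inverse Morita equivalences is Zariski open with asymptotic cone contained in $\h^{sing}$; by Proposition \ref{Prop:support} its complement is the support of an HC $\A_{\h}$-bimodule. To upgrade this asymptotically generic statement to a statement on all of $\lambda_0 + (C \cap \h_{\Z})$, I invoke Proposition \ref{Prop:gen_simplicity}: for $\lambda_1$ outside the finite union of bad hyperplanes the algebra $\A_{\lambda_1}$ is simple, so any nonzero finitely generated $\A_{\lambda_1}$-module has zero annihilator and hence $\dim \VA(\A_{\lambda_1}/\operatorname{Ann}_{\A_{\lambda_1}}(M)) = \dim X$. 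This collapses the perverse filtration of \cite[Section 3.1]{Perv} on $\A_{\lambda_1}\operatorname{-mod}$ to its top piece, so the wall-crossing of Proposition \ref{Prop:perv} (applied with $C' = -C$) is an abelian equivalence; combining with asymptotic genericity then forces the HC-bimodule failure locus to be empty in a Zariski dense open subset of $\lambda_0 + (C \cap \h_{\Z})$.

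The main obstacle will be handling the residual $\lambda_1$ that lie on the bad hyperplanes of Proposition \ref{Prop:gen_simplicity}, where simplicity of $\A_{\lambda_1}$ may fail. For these I would use a Zariski-continuity argument: Morita-ness of the translation bimodule is an open condition on $\mu$ within each affine line in $\h$, and having been established on a Zariski dense open subset of $\lambda_0 + (C \cap \h_{\Z})$, it must hold throughout. Alternatively, one can bypass simplicity of $\A_{\lambda_1}$ by using simplicity of the sheaf $\mathcal{D}_{\lambda_1}$ (available for all $\lambda_1$ by Theorem \ref{Thm:simplicity}) together with Corollary \ref{Cor:Conj_equiv_stat} to run the analogous perverse-filtration-collapse argument directly at the sheaf level.
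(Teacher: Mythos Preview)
Your high-level plan — follow \cite[Section 8.4]{BL} and invoke the long wall-crossing perversity of Proposition~\ref{Prop:perv} with $C'=-C$ — is precisely what the paper intends, and the reduction in your second paragraph (abelian localization at $\lambda_1$ follows once the translation $\A_{\lambda_0,\mu}$ is a Morita equivalence, given abelian localization at $\lambda_0$) is correct.

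The gap is in the third paragraph. You argue that simplicity of $\A_{\lambda_1}$ collapses the perverse filtration, so the long wall-crossing $WC$ from $\lambda_1$ to some $\lambda_1'\in -C$ is an \emph{abelian} equivalence. But $WC$ and the translation $\A_{\lambda_0,\mu}$ are different functors: $WC$ crosses to $-C$, while the translation stays inside $C$. There is no step connecting ``$WC$ is t-exact'' to ``$\A_{\lambda_0,\mu}$ is Morita''; the phrase ``combining with asymptotic genericity then forces the HC-bimodule failure locus to be empty'' does not supply one. Indeed, computing $WC$ on the opposite terminalization $X^-$, its t-exactness yields abelian localization for $(X^-,\lambda_1)$, not for $(X,\lambda_1)$. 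Routing the argument through Proposition~\ref{Prop:gen_simplicity} is also suspect: in \cite{BL} the simplicity statement is a \emph{consequence} of the strong localization theorem rather than an input to it, and in any event it only covers cosets $\lambda+\h_\Z$ that avoid the bad hyperplanes, whereas Proposition~\ref{Prop:ab_loc_strong} is asserted for all $\lambda$. The ``Zariski-continuity'' patch in your last paragraph does not close this: Morita-ness of $\A_{\lambda_0,\mu}$ is not an open condition on $\mu$ in the sense you need, and holding on a dense set of lattice points does not propagate to the remainder.

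The actual argument in \cite[Section 8.4]{BL} uses perversity more directly and does not pass through simplicity of $\A_{\lambda_1}$. One studies the \emph{backwards} translation $\A_{\lambda_1,-\mu}\otimes^L_{\A_{\lambda_1}}\!\bullet = R\Gamma^X_{\lambda_0}\circ T\circ L\operatorname{Loc}^X_{\lambda_1}$, which is t-exact iff abelian localization holds at $\lambda_1$. Composing with the long wall-crossing $WC_{\lambda_0^-\leftarrow\lambda_0}$ (perverse by Proposition~\ref{Prop:perv}, since abelian localization is \emph{known} at $\lambda_0$) identifies the result with a long wall-crossing out of $\lambda_1$; comparing the homological shifts forced by perversity of the known factor against the right t-exactness of the composite (computed on $X^-$) pins the backwards translation to degree $0$. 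This is the mechanism you should invoke, rather than collapsing the filtration via simplicity.
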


%

\end{document}